\definecolor{refkey}{rgb}{0,0,1}
\definecolor{labelkey}{rgb}{1,0,0}
\definecolor{darkblue}{rgb}{0.0, 0.0, 0.55}
\definecolor{darkcerulean}{rgb}{0.03, 0.27, 0.49}
\definecolor{darkpowderblue}{rgb}{0.0, 0.2, 0.6}
\definecolor{britishracinggreen}{rgb}{0.0, 0.26, 0.15}
\newtheorem{thm}{Theorem}[section]
\newtheorem{prop}[thm]{Proposition}
\newtheorem{lem}[thm]{Lemma}
\theoremstyle{definition}
\newtheorem{defn}[thm]{Definition}
\newtheorem{rem}[thm]{Remark}
\numberwithin{equation}{section}
\def\sq{q^{\frac{1}{2}}}
\def\sqi{q^{-\frac{1}{2}}}
\def\<{{\langle}} 
\def\>{{\rangle}} 
\def\ra{{\triangleleft}} 
\def\la{{\triangleright}} 
\def\note#1{{}}
\def\cU{{\mathcal U}}
\def\beq{\begin{equation}} 
	\def\eeq{\end{equation}} 
\def\cop{{\Delta}}
\def\id{\mathrm{id}}
\def \hx{\widehat{x}}
\def\fR{{\mathfrak R}}
\def\id{\mathrm{id}} 
\def\cU{{\mathcal U}}
\def\fK{\mathbb{K}}
\def\fC{\mathbb{C}}
\def\fRr{\mathbb{R}}
\def\fN{\mathbb{N}}
\def\fZ{\mathbb{Z}}
\def\be{\boldsymbol{e}}
\def\ra{{\triangleleft}} 
\def\la{{\triangleright}} 
\def\cU{{\mathcal U}}
\renewcommand{\emph}[1]{{\it #1}}
\theoremstyle{thmstyleone}%
\theoremstyle{thmstyletwo}%
\theoremstyle{thmstylethree}%
\begin{document}

\title[Braided symmetries of $SU_{q,\phi}(2)$ and Podle\'s Spheres.]{Braided symmetries of $SU_{q,\phi}(2)$ and Podle\'s Spheres.}


\author[1,2]{\fnm{Rafa\l{}} \sur{Bistroń}}\email{rafal.bistron@doctoral.uj.edu.pl}

\author[2]{\fnm{Andrzej} \sur{Sitarz}}\email{andrzej.sitarz@uj.edu.pl}

\affil[1]{\orgdiv{Institute of Theoretical Physics}, \orgname{Jagiellonian University}, \orgaddress{\street{prof.\ Stanis\l awa \L ojasiewicza 11}, \city{Kraków}, \postcode{30-348}, \country{Poland}}}

\affil[2]{\orgdiv{Doctoral School of Exact and Natural Sciences}, \orgname{Jagiellonian University}, \orgaddress{\street{prof.\ Stanis\l awa \L ojasiewicza 11}, \city{Kraków}, \postcode{30-348}, \country{Poland}}}


\abstract{	We present an explicit form of braided symmetries of the quantum spheres, by introducing a braided quantum Hopf algebra $\cU_{q, \phi}$ and demonstrating that they are braided  Hopf modules over this braided Hopf algebra. To obtain this result, we systematically develop braided versions of structures like pairing between Hopf algebras, left module algebra as well as their compatibility with the $*$-structure.}

\keywords{braided Hopf algebras, quantum groups, quantum spheres}

\pacs[MSC Classification]{17B37, 16T05}

\maketitle

	\section{Introduction}
	As fundamental objects that generalize groups and natural symmetries of noncommutative algebras, Hopf algebras and quantum groups have been the subject of extensive study in recent decades \cite{qGroups1, Hopf1, qGroups2}. In the context of quantum deformations of Lie groups with a complex-valued deformation parameter, their further generalization as objects in a braided category \cite{brGeo} has been recently revisited, specifically for the $SU_{q, \phi}(2)$ \cite{su2,podles} and for entire classes of algebras \cite{arek} or as symmetries of certain algebras \cite{BJR21}.
	
	Naturally, one would wonder if the braided Hopf algebras are algebraic symmetries in terms of action and coaction. Our inspiration comes from Podle\'s quantum spheres \cite{podles0}, which are Hopf module algebras for one of the most significant and well-known Hopf algebras, $\cU_{q}(su(2))$, the quantum deformation of the universal enveloping algebra of the $su(2)$ Lie algebra. We anticipate that understanding the braided version of these symmetries will be essential to moving forward with the construction of spectral geometries for braided versions of $SU_q(2)$ and Podle\'s spheres. The reason for that is that the algebra $\cU_{q}(su(2))$ and its representation theory have played a significant role in the construction of spectral triples over both $SU_q(2)$ \cite{dirac} and the families of Podle\'s spheres \cite{diracds, dirac_all_podles}. While \cite{podles} provides the coaction-based solution for the symmetries of Podle\'s sphere the action-based solution has not been yet explored.
	
	A first natural approach is to construct a braided version of the quantum $\cU_{q}(su(2))$ that would give $SU_{q, \phi}(2)$ a structure of braided Hopf algebra modules over it.  These attempts have started over 20 years ago and few promising candidates have been proposed, see for example \cite{quasiT}. However, up to our knowledge, none of those possesses the  important geometrical properties of $\cU_{q}(su(2))$, 
	especially the pairing with  $SU_{q, \phi}(2)$ \cite{dirac} and the action on the algebra of quantum spheres.
	
	In this work, we explicitly design a braided Hopf algebra with a well-defined action on $SU_{q, \phi}(2)$ and a pairing with it. In order to do so, we extend the $\cU_{q}(su(2))$ algebra by permitting the generator $k$ to not be self-adjoint. This effectively makes the $u(2)$ Lie algebra resemble a braided version of the quantum deformation of a universal envelope.
	
	This paper begins with a discussion of the left action in the context of braided Hopf algebras, giving two key examples: the braided form of the left adjoint action and the action constructed from the braided pairing. The first is primarily a model of the braided action, whereas the latter is the result on which we base our subsequent work. 
	
	As a motivating example, we review the case of the $SU_{q, \phi}(2)$ braided Hopf algebra, presenting the braiding procedure  \cite{arek} which is used later. Then, we introduce the construction of the braided quantum Hopf algebra $\cU_{q,\phi}(\hat{u}(2))$ and provide the explicit braided pairing between $SU_{q, \phi}(2)$ and $\cU_{q,\phi}(\hat{u}(2))$ as well as the braided left action of $\cU_{q,\phi}(\hat{u}(2))$ on $SU_{q, \phi}(2)$.
	
	We study the irreducible representations of the algebra $\cU_{q,\phi}(\hat{u}(2))$ and check possible quadratic algebra structures over the irreducible representation of dimension 
	$3$. We arrive at two possible realisations, with only one of them admitting
	a star structure and bounded $\ast$-representations. The obtained family
	of algebras turns out to be, after appropriate basis change, the family
	of  Podle\'s spheres \cite{podles, podles0}. This gives an alternative
	derivation of Podle\'s spheres (compared to the one in \cite{podles}), arising 
	from imposing a Hopf module structure over $\cU_{q,\phi}(\hat{u}(2))$ in the braided sense.
	
	Finally, we investigate the $*$-structures for braided Hopf algebras and their modules, as well as the criteria for compatibility between left action and the $*$-structure for braided Hopf algebras derived through twisting.

\section{Braided Hopf module algebras}
In this section, we define the notion of a braided Hopf module algebra and show that a braided Hopf algebra 
acts on itself through an adjoint action. Then we present the construction of braided modules based on braided pairing between braided Hopf algebras.  We start by reviewing a graphic notation that we shall use to make 
calculations and proofs clearer and more articulate.

We assume that the reader is familiar with the notion of  a braided Hopf algebra, however, for completeness
we recall the definition.

\begin{defn}
We say  that $(H, \cdot , \cop, S, \epsilon, \eta, \Psi)$ is a braided Hopf algebra, if there exists a braiding  
\begin{equation*}
	\hstretch 90  \vstretch 60
	\begin{aligned}
		& \Psi \quad \rightarrow \quad
		\begin{tangle}
			\object{H} \step[2] \object{H}\\
			\x  {{{}^\Psi}} \step[1.21] \\
			\object{H} \step[2] \object{H} \\
		\end{tangle} \hspace{0.1 cm},
	\end{aligned}
\end{equation*}
such that the coproduct $\cop$ and counit $\epsilon$ are morphisms and the antipode $S$ is an antihomomorphism
in the braided category, satisfying the standard requirements of the Hopf algebra. By $\eta$ we denote 
the unit of the algebra. In the rest of the paper we use the standard notation for the operations, the product 
and the unit, 
\begin{equation*}
	\hstretch 90  \vstretch 60
	\begin{aligned}
		& \cdot \quad \rightarrow \quad
		\begin{tangle}
			\object{H} \step[2] \object{H}\\
			\cu \\
			\step[1] \object{H} 
		\end{tangle}\hspace{0.4 cm}, &&\text{~~~~~~} &&
		\eta \quad \rightarrow \quad 
		\begin{tangle}
			\unit \\
			\object{H}
		\end{tangle} \hspace{0.4 cm},\\
	\end{aligned}
\end{equation*}
	The coproduct, counit and the antipode, 
\begin{equation*}
	\hstretch 90  \vstretch 60
	\begin{aligned}
		& \Delta \quad \rightarrow \quad
		\begin{tangle}
			\step[1] \object{H}\\
			\cd \\
			\object{H} \step[2] \object{H} 
		\end{tangle} \hspace{0.4 cm}, && \text{~~~~~~}&&
		\epsilon \quad \rightarrow \quad 
		\begin{tangle}
			\object{H} \\
			\counit \\ 
		\end{tangle} \hspace{0.4 cm}, && \text{~~~~~~}&&
			& S \quad \rightarrow \quad
	\begin{tangle}
		\object{H}\\
		\S \\
		\object{H} 
	\end{tangle}
	\end{aligned}
\end{equation*}
If the algebra $H$ has a star structure $\ast$, we shall denote it as, 
\begin{equation*}
\hstretch 90  \vstretch 60
	\begin{aligned}
		* \quad \rightarrow \quad 
		\begin{tangle}
			\object{H} \\
			\O* \\ 
			\object{H}
		\end{tangle} \hspace{0.4 cm}.\\
	\end{aligned}
\end{equation*}
\end{defn}
More details on the properties of the braided antipode can be found in the Appendix \ref{branti}.
Next, we make the concept of the left braided action precise.  
\begin{defn}
\label{Ract}
Let $H$ be a braided Hopf algebra and $A$ an algebra. We say that $A$ is left braided-$H$ module algebra 
if there exists  a bilinear map, $\la : H \otimes A \to A$, called left action, and a braiding 
$\Xi: H \otimes A  \to A \otimes H$,
denoted graphically:
\begin{equation*}
	\la  \quad \rightarrow \quad 
	\hstretch 90  \vstretch 60
	\begin{tangle}
	\object{H} \step[2] \object{A} \\
	\lu[2] \\ 
	\step[2] \object{A}
	\end{tangle} \hspace{0.2 cm}, \hspace{1cm}
 \Xi \quad \rightarrow \quad   
		\begin{tangle}
			\object{H} \step[2] \object{A}\\
			\x  {{{}^\Xi}} \step[1.21] \\
			\object{A} \step[2] \object{H} \\
		\end{tangle} \hspace{0.1 cm}.  \\
\end{equation*}
such that the following compatibility conditions are satisfied for the braiding $\Xi$: 
\begin{equation}
	\label{xi-prod}
	\hstretch 90  \vstretch 60
	\begin{aligned}
	&\hbox{$\bullet$ compatibility with the product in $H$}: \hspace{5mm}
	& \begin{tangle}
			\object{H} \step[2] \object{H}\step[2]\object{A}\\
			\cu \step[2] \id \\
			\step[1] \nw1 \step[2] \id \\
			\step[2] \x {{{}^\Xi}} \\
			\step[2] \id  \step[2]\id \\
			\step[2] \object{A} \step[2] \object{H}\\
		\end{tangle} 
	 \quad = \quad 
	 \begin{tangle}
		\object{H} \step[2] \object{H}\step[2]\object{A}\\
		\id \step[2] \id \step[2] \id \\
		\id \step[2] \x {{{}^\Xi}} \\
		\x {{{}^\Xi}}  \step[1.31] \id \\
		\id \step[2] \cu \\
		\object{A} \step[3] \object{H} \\
	\end{tangle} \\[5mm]
	&\hbox{$\bullet$ compatibility with the product in $A$}: 
		&	\begin{tangle}
			\object{H} \step[2] \object{A}\step[2]\object{A}\\
			\id \step[2] \cu \\
			\nw1 \step[2] \id \\
			\step[1] \x {{{}^\Xi}} \\
			\step[1] \id  \step[2]\id \\
			\step[1] \object{A} \step[2] \object{H}\\
		\end{tangle}
		\quad = \quad 
		\begin{tangle}
			\object{H} \step[2] \object{A}\step[2]\object{A}\\
			\id \step[2] \id \step[2] \id \\
			\x {{{}^\Xi}}  \step[1.31]  \id \\
			\id \step[2] \x {{{}^\Xi~}}  \\
			\cu \step[2] \id  \\
			\step[1] \object{A} \step[3] \object{H} \\
		\end{tangle} \\[5mm]
&\hbox{$\bullet $ compatibility with the coproduct in $H$}: 
			&\begin{tangle}
				\step[1] \object{H}\step[3]\object{A}\\
				\cd \step[2] \id \\
				\id  \step[2] \x {{{}^\Xi}} \\
				\x {{{}^\Xi}} \step[1.31] \id \\
				\id \step[2] \id  \step[2]\id \\
				\object{A} \step[2] \object{H} \step[2] \object{H}\\
			\end{tangle}
			\quad = \quad
			\begin{tangle}
				\object{H}\step[2]\object{A}\\
				\id \step[2] \id \\
				\x {{{}^\Xi}}\\
				\id  \step[2] \nw1 \\
				\id  \step[2] \cd \\
				\object{A} \step[2] \object{H} \step[2] \object{H}\\
			\end{tangle}
			\end{aligned}
\end{equation}
Moreover, the left action $\la$ is compatible with the products in $A$ and $H$, 
\begin{equation}
			\label{action}
			\hstretch 90  \vstretch 60
			\begin{aligned}
				\begin{tangle}
					\object{H}\step[2]\object{A} \step[2]\object{A} \\
					\id  \step[2] \id  \step[2] \id \\
					\id  \step[2] \cu  \\
					\hlu[6] \\
					\step[3] \id \\
					\step[3] \id \\
					\step[3] \object{A}
				\end{tangle}
				\quad = \quad
				\begin{tangle}
					\step[1] \object{H}\step[3]\object{A} \step[2]\object{A} \\
					\cd \step[2] \id \step[2] \id \\
					\id \step[2] \x {{{}^\Xi}} \step[1.31] \id \\
					\id \step[2] \id \step[2] \id \step[2] \id \\
					\hlu[4] \step[2] \hlu[4] \\
					\step[2] \Cu \\
					\step[4] \object{A}
				\end{tangle}\hspace{0.2 cm}, \hspace{1 cm} &&
				&\begin{tangle}
					\step[1] \object{H}\step[2]\object{H}\step[2]\object{A}\\
					\step[1] \cu \step[2] \id  \\
					\step[2] \hlu[6]  \\
					\step[5] \id \\
					\step[5] \id \\
					\step[5] \object{A} 
				\end{tangle}
				\quad = \quad
				\begin{tangle}
					\step[1] \object{H}\step[2]\object{H}\step[2]\object{A}\\
					\step[1]\id \step[2] \id \step[2] \id   \\
					\step[1]\id \step[2] \hlu[4]   \\
					\step[1]\hlu[8]   \\
					\step[5] \id \\
					\step[5] \object{A} 
				\end{tangle} \hspace{0.2 cm}, &&
			\end{aligned}
		\end{equation}
and it is compatible with the braidings $\Psi, \Xi$,
		\begin{equation}
			\label{action_comp}
			\hstretch 90  \vstretch 60
			\begin{aligned}
				\begin{tangle}
					\object{H} \step[2] \object{H}\step[2]\object{A}\\
					\id \step[2] \id \step[2] \id \step[2]\\
					\id \step[2] \hlu[4] \\
					\nw2 \step[3] \id \\
					\step[2] \x {{{}^\Xi}} \\
					\step[2] \object{A} \step[2] \object{H}\\
				\end{tangle}
				\quad = \quad 
				\begin{tangle}
					\object{H} \step[2] \object{H}\step[2]\object{A}\\
					\id \step[2] \id \step[2] \id \step[2]\\
					\x {{{}^\Psi}} \step[1.21] \id \\
					\id \step[2] \x {{{}^\Xi}} \\
					\hlu[4] \step[2] \id \\
					\step[2] \object{A} \step[2] \object{H} \\
				\end{tangle}  \hspace{0.2 cm}.
			\end{aligned}
		\end{equation}
In case when $H$ and $A$ are unital  we require, of course, the usual properties concerning the action of the unit
 and on the unit:
\begin{equation}
\label{action_units}
1 \la a = a, \qquad \qquad h \la 1 = \epsilon(h) 1.	
\end{equation}
\end{defn}
\subsection{Example: the adjoint action}

A standard example of a left $H$-module algebra is given by $H$, with the adjoint action of $H$ on itself given by, 
	\begin{equation}
		\label{non_br_adj}
		h \la g = h_{(1)}g S(h_{(2)}), \quad \forall h,g \in H.
	\end{equation}	
This has again a natural generalization for braided Hopf algebras. 
\begin{prop}
Let $H$ be a braided Hopf algebra. The braided adjoint action of $H$ 
on itself is defined as
		\begin{equation}
			\label{br_adj}
			h \la g =   \cdot \circ (\cdot \otimes S)\circ(\id \otimes \Psi)\circ (\Delta \otimes \id)(h \otimes g) \vspace{0.4 cm}.
		\end{equation}
Together with the  $\Psi$ as the braiding between the braided Hopf algebra $H$ and the left $H$-module 
algebra $H$, it make $H$ a left braided $H$-module algebra in the sense of Definition \ref{Ract} .	
\end{prop}		
The action \eqref{br_adj} can be represented diagrammatically in an elegant way:
		\begin{equation}
			\label{br_adj_diag}
			\hstretch 90  \vstretch 60
			\begin{tangle}
				\object{H} \step[2] \object{H} \\
				\lu[2] \\
				\step[2] \object{H} \\
			\end{tangle}
			\quad = \quad
			\begin{tangle}
				\step[1] \object{H} \step[3] \object{H} \\
				\cd  \step[2] \id \\
				\id  \step[2]  \x {{{}^\Psi}} \step[1.21] \\
				\id \step[2]  \id \step[2] \S \\
				\cu \step[1] \ne1 \\
				\step[1] \cu \\
				\step[2] \object{H}  \step[1]  \\
			\end{tangle} \quad .
		\end{equation}
  
\begin{proof}
		Since the braiding  $\Xi$ is just the braiding $\Psi$ in $H$ it automatically satisfies
		the compatibility with the products and coproducts. Therefore, it remains to verify only the properties \eqref{action} and the condition \eqref{action_comp}.
		
		First, let us examine condition $f \la (g \la h) = (fg) \la h$ \eqref{action}  for the braided adjoin action \eqref{br_adj}. Starting from the right-hand side one obtains:
		
		\begin{equation*}
			\hstretch 90  \vstretch 60
			\begin{aligned}
				\begin{tangle}
					\object{H} \step[2] \object{H} \step[2] \object{H} \\
					\cu  \step[2] \id \\
					\step[1] \lu[3] \\
					\step[4] \object{H} \\
				\end{tangle}
				& \quad \underset{(1)}{=} \quad
				\begin{tangle}
					\object{H} \step[2] \object{H} \step[2] \object{H} \\
					\cu  \step[2] \id \\
					\cd  \step[2] \id \\
					\id  \step[2]  \x {{{}^\Psi}} \step[1.21] \\
					\id \step[2]  \id \step[2] \S \\
					\cu \step[1] \ne1 \\
					\step[1] \cu \\
					\step[2] \object{H} \\
				\end{tangle}
				\quad \underset{(2)}{=} \quad
				\begin{tangle}
					\step[1]\object{H} \step[4] \object{H} \step[3] \object{H} \\
					\cd  \step[2] \cd \step[2] \id \\
					\id  \step[2] \x {{{}^\Psi}} \step[1.21]  \id  \step[2] \id \\
					\cu \step[2] \cu \step[1] \ne1 \\
					\step[1]\id  \step[4]  \x {{{}^\Psi}} \step[1.21] \\
					\step[1]\id \step[3]  \ne2 \step[2] \S \\
					\step[1] \cu \step[3] \ne3 \\
					\step[2] \cu \\
					\step[3] \object{H} \\
				\end{tangle}
				\quad \underset{(3)}{=} \quad
				\begin{tangle}
					\step[1]\object{H} \step[4] \object{H} \step[3] \object{H} \\
					\cd  \step[2] \cd \step[2] \id \\
					\id  \step[2] \x {{{}^\Psi}} \step[1.21] \x {{{}^\Psi}} \step[1.21]   \\
					\cu \step[2] \x {{{}^\Psi}} \step[1.21] \id \\
					\step[1] \nw1  \step[2] \id \step[2]  \cu \\
					\step[2] \cu \step[3]  \S  \\
					\step[3] \id \step[3] \ne2 \\
					\step[3] \cu \\
					\step[4] \object{H} \\
				\end{tangle} \underset{(4)}{=} 
			\end{aligned}
		\end{equation*}
		\begin{equation*}
			\hstretch 90  \vstretch 60
			\begin{aligned}
				\phantom{xxx}& \underset{(4)}{=}
				\begin{tangle}
					\step[1]\object{H} \step[4] \object{H} \step[3] \object{H} \\
					\cd  \step[2] \cd \step[2] \id \\
					\id  \step[2] \x {{{}^\Psi}} \step[1.21] \x {{{}^\Psi}} \step[1.21]   \\
					\cu \step[2] \x {{{}^\Psi}} \step[1.21] \id \\
					\step[1] \nw1  \step[2] \id \step[2]  \x {{{}^\Psi}} \step[1.21] \\
					\step[2] \cu \step[2]  \S \step[2] \S \\
					\step[3] \id \step[3] \cu \\
					\step[3] \id \step[3] \ne2 \\
					\step[3] \cu \\
					\step[4] \object{H} \\
				\end{tangle}
				\quad \underset{(5)}{=} \quad
				\begin{tangle}
					\step[1] \object{H} \step[4] \object{H} \step[3] \object{H} \\
					\cd \step[2] \cd \step[2] \id \\
					\id \step[2] \id \step[2] \id  \step[2]  \x {{{}^\Psi}} \\
					\id \step[2] \x {{{}^\Psi}} \step[1.21]  \id \step[2] \S \\
					\cu \step[2] \x {{{}^\Psi}} \step[1.21] \id \\
					\step[1] \nw1 \step[2] \id \step[2] \x {{{}^\Psi}} \\
					\step[2] \cu \step[2] \id \step[2] \S \\
					\step[3] \id \step[3] \cu \\
					\step[3] \Cu \\
					\step[5] \object{H} \\
				\end{tangle}
				\quad \underset{(6)}{=} \quad
				\begin{tangle}
					\step[1] \object{H} \step[4] \object{H} \step[3] \object{H} \\
					\cd \step[2] \cd \step[2] \id \\
					\id \step[2] \id \step[2] \id  \step[2]  \x \\
					\id \step[2] \x {{{}^\Psi}} \step[1.21] \id \step[2] \S \\
					\id \step[2] \id \step[2] \x {{{}^\Psi}} \step[1.21] \id \\
					\id \step[2] \cu \step[2] \x {{{}^\Psi}} \\
					\id \step[3] \id \step[2] \ne1 \step[2] \S \\
					\nw2 \step[2] \cu \step[2] \ne2 \\
					\step[2] \cu \step[1] \ne1 \\
					\step[3] \cu \\
					\step[4] \object{H} \\
				\end{tangle}
				\quad  \\
		\end{aligned}
		\end{equation*}
		\begin{equation*}
	\hstretch 90  \vstretch 60
      \begin{aligned}
				& \underset{(7)}{=}  \quad
				\begin{tangle}
					\step[1] \object{H} \step[2] \object{H} \step[3] \object{H} \\
					\step[1] \id \step[1] \cd \step[2] \id \\
					\step[1] \id \step[1] \id  \step[2]  \x {{{}^\Psi}}  \\
					\step[1] \id \step[1] \id \step[2]  \id \step[2] \S \\
					\step[1] \id \step[1] \cu \step[1] \ne1 \\
					\step[1] \id \step[2] \cu \\
					\cd \step[2] \id \\
					\id  \step[2]  \x {{{}^\Psi}}  \\
					\id \step[2]  \id \step[2] \S \\
					\cu \step[1] \ne1 \\
					\step[1] \cu \\
					\step[2] \object{H} \\
				\end{tangle}
				\quad \underset{(8)}{=} \quad
				\begin{tangle}
					\object{H} \step[2] \object{H} \step[2] \object{H} \\
					\id  \step[2] \lu[2] \\
					\lu[4] \\
					\step[4] \object{H} \\
				\end{tangle} \quad .
			\end{aligned}
		\end{equation*}

		In the above in consecutive steps the following properties were used:
		
		\begin{enumerate}
			\setlength\itemsep{0 cm}
			\item Definition of braided adjoint action \eqref{br_adj_diag}.
			\item Morphism property of coproduct in a braided category.
			\item Compatibility of product with braiding.
			\item Antihomomorphism property of antipode in a braided category.
			\item Compatibility of antipode with braiding.
			\item Associativity of product.
			\item Compatibility of product with braiding.
			\item Definition of braided adjoint action \eqref{br_adj_diag}.
		\end{enumerate}
		
		The other property  from \eqref{action} is a braided analogue of the $f \la (gh) = (f_{(1)} \la g)(f_{(2)} \la h)$ with additional braiding between $g$ and $f_{(2)}$. 
		
		\begin{equation*}
			\hstretch 90  \vstretch 60
			\begin{aligned}
				\begin{tangle}
					\step[1] \object{H} \step[3] \object{H} \step[2] \object{H} \\
					\cd  \step[2] \id  \step[2] \id \\
					\id \step[2]  \x {{{}^\Psi}} \step[1.21] \id\\
					\lu[2] \step[2] \lu[2]\\
					\step[2] \Cu \\
					\step[4] \object{H} \\
				\end{tangle}
				& \quad \underset{(1)}{=} \quad
				\begin{tangle}
					\step[2] \object{H} \step[4] \object{H} \step[4] \object{H} \\
					\step[1]\cd  \step[3] \id  \step[4] \id \\
					\step[1] \id \step[2] \nw1 \step[2] \id \step[4] \id \\
					\step[1] \id \step[3]  \x {{{}^\Psi}} \step[3.21] \id\\
					\step[1] \id \step[3] \id \step[2] \nw1 \step[3] \id \\
					\cd \step[2] \id \step[2] \cd \step[2] \id \\
					\id  \step[2]  \x {{{}^\Psi}} \step[1.21] \id  \step[2]  \x {{{}^\Psi}} \\
					\id \step[2]  \id \step[2] \S \step[2] \id \step[2]  \id \step[2] \S \\
					\cu \step[1] \ne1 \step[2] \cu \step[1] \ne1\\
					\step[1] \cu \step[4] \cu  \\
					\step[2] \id \step[5] \ne2 \\
					\step[2] \Cu \\
					\step[4] \object{H} \\
				\end{tangle}
				\quad \underset{(2)}{=} \quad
				\begin{tangle}
					\step[3] \object{H} \step[5] \object{H} \step[2] \object{H} \\
					\step[1] \Cd \step[3] \id \step[2] \id \\
					\cd \step[2] \cd \step[2] \id \step[2] \id \\
					\id \step[2] \id \step[2] \id \step[2] \x {{{}^\Psi}} \step[1.21] \id \\
					\id \step[2] \id \step[2] \x {{{}^\Psi}} \step[1.21] \x {{{}^\Psi}} \\
					\id \step[2] \x {{{}^\Psi}} \step[1.21] \id \step[2] \id \step[2] \id \\
					\id \step[2] \id \step[2] \S \step[2] \id \step[2] \id \step[2] \S \\
					\cu \step[1] \ne1 \step[2] \cu \step[1] \ne1 \\
					\step[1] \cu \step[4] \cu \\  
					\step[2] \nw1 \step[4] \ne1 \\
					\step[3] \Cu \\
					\step[5] \object{H}
				\end{tangle}
				\quad \underset{(3)}{=}
			\end{aligned}
		\end{equation*}   
		\begin{equation*}
			\hstretch 90  \vstretch 60
			\begin{aligned}   
				\phantom{xxx} 
				& \underset{(3)}{=} \begin{tangle}
					\step[3] \object{H} \step[5] \object{H} \step[2] \object{H} \\
					\step[1] \Cd \step[3] \id \step[2] \id \\
					\ne1 \step[2] \Cd \step[1]\id \step[2] \id\\
					\id \step[2] \cd \step[3]  \hx {{{}^\Psi}}\step[1.21] \id \\
					\id \step[2] \id \step[2] \id \step[2] \ne1 \step[1] \id \step[2] \id \\
					\id \step[2] \id \step[2] \x {{{}^\Psi}} \step[1.21] \x {{{}^\Psi}} \\
					\id \step[2] \x {{{}^\Psi}} \step[1.21] \id \step[2] \id \step[2] \id \\
					\id \step[2] \id \step[2] \S \step[2] \id \step[2] \id \step[2] \S \\
					\cu \step[2] \cu \step[2] \cu \\
					\step[1] \Cu \step[3] \ne2 \\   
					\step[3] \Cu \\
					\step[5] \object{H}
				\end{tangle}
				\quad \underset{(4)}{=} \quad
				\begin{tangle}
					\step[2] \object{H} \step[4] \object{H} \step[2] \object{H} \\
					\Cd \step[2] \id \step[2] \id \\
					\id \step[3] \cd \step[1]\id \step[2] \id\\
					\id \step[2] \cd \step[1] \hx {{{}^\Psi}} \step[1.21] \id \\
					\id \step[2] \S \step[2] \id \step[1] \id \step[1] \id \step[2] \id  \\
					\id \step[2] \cu \step[1] \id \step[1] \x {{{}^\Psi}} \\
					\id \step[3] \x {{{}^\Psi}} \step[0.21] \id \step[2] \S \\
					\id \step[2] \ne1 \step[1] \ne1 \step[1] \id \step[2] \id \\
					\cu \step[2] \cu \step[1] \ne1 \\
					\step[1] \Cu \step[2] \id \\   
					\step[3] \Cu \\
					\step[5] \object{H}
				\end{tangle}
				\quad \underset{(5)}{=} \quad
				\begin{tangle}
					\step[2] \object{H} \step[4] \object{H} \step[2] \object{H} \\
					\Cd \step[2] \id \step[2] \id \\
					\id \step[3] \cd \step[1]\id \step[2] \id\\
					\id \step[3] \counit \step[2] \hx {{{}^\Psi}} \step[1.21] \id \\
					\id \step[3] \unit \step[2] \id \step[1] \x {{{}^\Psi}}  \\
					\id \step[3] \x {{{}^\Psi}} \step[0.21] \id \step[2] \S \\
					\id \step[2] \ne1 \step[1] \ne1 \step[1] \id \step[2] \id \\
					\cu \step[2] \cu \step[1] \ne1 \\
					\step[1] \Cu \step[2] \id \\   
					\step[3] \Cu \\
					\step[5] \object{H}
				\end{tangle} \underset{(6)}{=}
			\end{aligned}
		\end{equation*}   
		\begin{equation*}
			\hstretch 90  \vstretch 60
			\begin{aligned}   
				\phantom{xxx} &\underset{(6)}{=} \quad
				\begin{tangle}
					\step[1] \object{H} \step[3] \object{H} \step[2] \object{H} \\
					\cd \step[2] \id \step[2] \id \\
					\id \step[2] \x {{{}^\Psi}} \step[1.21] \id \\
					\id \step[2] \id \step[2] \x {{{}^\Psi}}  \\
					\nw1 \step[1] \cu \step[2] \S \\
					\step[1] \cu \step[3] \id \\
					\step[2] \Cu \\
					\step[4] \object{H}
				\end{tangle}
				\quad \underset{(7)}{=} \quad
				\begin{tangle}
					\step[1] \object{H} \step[2] \object{H} \step[2] \object{H} \\
					\cd \step[1] \cu \\
					\id \step[2] \x {{{}^\Psi}}  \\
					\id \step[2] \id \step[2] \S \\
					\cu \step[1] \ne1 \\
					\step[1] \cu \\
					\step[2] \object{H}
				\end{tangle}
				\quad \underset{(8)}{=} \quad 
				\begin{tangle}
					\object{H} \step[2] \object{H} \step[2] \object{H} \\
					\id \step[2] \cu \\
					\lu[3] \\
					\step[3] \object{H}
				\end{tangle} \, .
			\end{aligned}
		\end{equation*}

		Where in the consecutive steps the following properties were used:
		\begin{enumerate}
			\setlength\itemsep{0 cm}
			\item Definition of braided adjoint action \eqref{br_adj_diag}.
			\item Compatibility of coproduct with braiding.
			\item Associativity of product and coassociativity of coproduct.
			\item Compatibility of antipode and coproduct with braiding.
			\item Definition of antipode.
			\item Definition of unit and counit.
			\item Compatibility of product with braiding.
			\item Definition of braided adjoint action \eqref{br_adj_diag}.
		\end{enumerate}
		
		Then, as the last step we verify the property \eqref{action_comp},

		\begin{equation*}
			\hstretch 90  \vstretch 60
			\begin{aligned}
				\begin{tangle}
					\object{H} \step[2] \object{H}\step[2]\object{H}\\
					\id \step[2] \id \step[2] \id \step[2]\\
					\id \step[2] \hlu[4] \\
					\nw2 \step[3] \id \\
					\step[2] \x {{{}^\Psi}} \\
					\step[2] \object{H} \step[2] \object{H}\\
				\end{tangle}
				\quad \underset{(1)}{=} \quad 
				\begin{tangle}
					\object{H} \step[2] \object{H}\step[3]\object{H}\\
					\id \step[1] \cd  \step[2] \id \\
					\id \step[1] \id  \step[2]  \x {{{}^\Psi}}  \\
					\id \step[1] \id \step[2]  \id \step[2] \S \\
					\id \step[1] \cu \step[1] \ne1 \\
					\nw1 \step[1] \cu \\
					\step[1] \x {{{}^\Psi}} \\
					\step[1] \object{H} \step[2] \object{H}\\
				\end{tangle}
				\quad \underset{(2)}{=} \quad 
				\begin{tangle}
					\object{H} \step[3] \object{H}\step[3]\object{H}\\
					\id \step[2] \cd  \step[2] \id \\
					\x {{{}^\Psi}} \step[1.21] \x {{{}^\Psi}} \\
					\id \step[2] \x {{{}^\Psi}} \step[1.21] \S \\
					\id \step[2] \id \step[2] \x {{{}^\Psi}} \ \\
					\cu \step[1] \ne1  \step[2] \id \\
					\step[1] \cu \step[3] \id \\
					\step[2] \object{H} \step[4] \object{H}\\
				\end{tangle}
				\quad \underset{(3)}{=} \quad 
				\begin{tangle}
					\step[1] \object{H} \step[2] \object{H}\step[3]\object{H}\\
					\step[1] \x {{{}^\Psi}} \step[2.21]  \id \\
					\step[1] \id \step[2] \nw1 \step[2] \id \\
					\cd  \step[2] \x {{{}^\Psi}}  \\
					\id  \step[2]  \x {{{}^\Psi}} \step[1.21] \id \\
					\id \step[2]  \id \step[2] \S \step[2] \id \\
					\cu \step[1] \ne1 \step[2] \id \\
					\step[1] \cu  \step[3] \id \\
					\step[2] \object{H} \step[4] \object{H}\\
				\end{tangle}
				\quad \underset{(4)}{=} \quad 
				\begin{tangle}
					\object{H} \step[2] \object{H}\step[2]\object{H}\\
					\id \step[2] \id \step[2] \id \step[2]\\
					\x {{{}^\Psi}} \step[1.21] \id \\
					\id \step[2] \x {{{}^\Psi}} \\
					\hlu[4] \step[2] \id \\
					\step[2] \object{H} \step[2] \object{H} \\
				\end{tangle} \, .
			\end{aligned}
		\end{equation*}
		\phantom{xxx}
		In the above in consecutive steps the following properties were used:
		\begin{enumerate}
			\setlength\itemsep{0 cm}
			\item Definition of braided adjoint action \eqref{br_adj_diag}.
			\item  Compatibility of product with braiding.
            \item  Compatibility of antipode and coproduct with braiding.
			\item Definition of braided adjoint action \eqref{br_adj_diag}.
		\end{enumerate}
	\end{proof}
	\subsection{Example: an action from the pairing}
	
	Let us recall that the duality pairing between two Hopf algebras $H,A$,  is a linear map $H \otimes A \to \fK$, which satisfies 
	\begin{equation}
		\label{pairingcond}	
		\begin{aligned}
			& \langle hg, a\rangle = \langle  h , a_{(1)} \rangle  \langle g , a_{(2)} \rangle, \qquad
			&& \langle h, ab \rangle =  \langle  h_{(1)} , a \rangle  \langle h_{(2)} , b \rangle, \\
			& \langle h, S(a)\rangle = 	 \langle S(h), a\rangle, 
			&&  \langle(Sh)^*,a\rangle = \overline{\langle h,a^*\rangle}.
		\end{aligned}
	\end{equation}
	for all $h,g \in H$, $a,b \in A$.
	The last condition is required if both $A,H$ are Hopf $\ast$-algebras. 
	If such pairing exists it gives rise to a natural left $H$ module structure 
	on $A$, given by
	\begin{equation}
		\label{non_br_la}
		h \la a = a_{(1)}\langle h, a_{(2)} \rangle.
	\end{equation}
	To generalize this left module structure for the braided case we first need to define an appropriate braided pairing.
	\begin{defn}\label{defbrpa}
		Let $H$ and $A$ be two braided Hopf algebras.  Bilinear map  $\langle \cdot, \cdot \rangle: H \otimes A \rightarrow \fK$ is called a braided pairing if it satisfies the following properties: 
		\begin{equation}
			\label{BrPairing}
			\hstretch 90  \vstretch 60
			\begin{aligned}
				\begin{tangle}
					\object{H} \step[3] \object{A} \step[2] \object{A} \\
					\id \step[3] \cu \\
					\id \step[4] \id \\
					\coRo~ \\
				\end{tangle}
				\quad &= \quad
				\begin{tangle}
					\step[1] \object{H} \step[3] \object{A} \step[2] \object{A} \\
					\cd \step[2] \id \step[2] \id \\
					\id \step[2] \x {{{}^\Xi}} \step[1.31] \id \\
					\coro~ \step[2]\coro~ \\
				\end{tangle} \hspace{0.2 cm}, \\
				\begin{tangle}
					\object{H} \step[2] \object{H} \step[3] \object{A} \\
					\cu \step[3] \id \\
					\step[1] \id \step[4] \id \\
					\step[1]\coRo~ \\
				\end{tangle}
				\quad &= \quad
				\begin{tangle}
					\object{H} \step[2] \object{H} \step[3] \object{A} \\
					\id \step[2] \id \step[2] \cd \\
					\id \step[2] \x {{{}^\Xi}} \step[1.31] \id \\
					\coro~ \step[2]\coro~ \\
				\end{tangle}\hspace{0.2 cm}, \\
				\hspace{0.5 cm} \\
				\begin{tangle}
					\object{H} \step[2] \object{A} \\
					\S \step[2] \id \\
					\coro~ \\
				\end{tangle}
				\quad &= \quad
				\begin{tangle}
					\object{H} \step[2] \object{A} \\
					\id \step[2] \S \\
					\coro~ \\
				\end{tangle}\hspace{0.2 cm},
			\end{aligned} 
		\end{equation}
		where the box represents the paring and $\Xi$, as before, denotes the braiding map between $H$ and $A$, satisfying the conditions (\ref{xi-prod}), together with analogous conditions for product and coproduct in $A$.  
		
	\end{defn}
	\begin{thm}\label{thh_br_par}
		If $\langle \cdot, \cdot\rangle$ is a braided paring between Hopf algebras $H$, $A$ as defined in the Definition \ref{defbrpa}
		that additionally satisfies,
		\begin{equation}
			\label{BrPair2}
			\hstretch 90  \vstretch 60
			\begin{aligned}
				&\begin{tangle}
					\object{H} \step[2] \object{A} \step[2] \object{A} \\
					\id \step[2] \x {{{}^\Psi}}  \\ 
					\x {{{}^\Xi}} \step[1.31] \id \\
					\id \step[2] \coro~ \\
					\object{A} \\
				\end{tangle}
				\quad = \quad
				\begin{tangle}
					\object{H} \step[2] \object{A} \step[2] \object{A} \\
					\coro~ \step[2]  \id \\
					\step[4] \object{A} \\
				\end{tangle} \hspace{0.2 cm}, \hspace{1 cm}&&
				\begin{tangle}
					\object{H} \step[2] \object{H} \step[2] \object{A} \\
					\x {{{}^\Psi}} \step[1.31] \id \\
					\id \step[2] \x {{{}^\Xi}} \step[1.31] \\
					\coro~ \step[2] \id \\
					\step[4] \object{H} \\
				\end{tangle}
				\quad = \quad
				\begin{tangle}
					\object{H} \step[2] \object{H} \step[2] \object{A} \\
					\id \step[2] \coro~ \\
					\object{H} \\
				\end{tangle} \quad ,
			\end{aligned} 
		\end{equation}
		then the following defines a braided left action of $H$ on $A$:    
		\begin{equation}
			\label{constr}
			\hstretch 90  \vstretch 60
			\begin{aligned}
				&\begin{tangle}
					\object{H} \step[2] \object{A}\\
					\hlu[4] \\
					\step[2] \id \\
					\step[2] \object{A}
				\end{tangle}
				\quad = \quad 
				\begin{tangle}
					\object{H} \step[3] \object{A} \\
					\id \step[2] \cd \\
					\x {{{}^\Xi}} \step[1.31] \id \\
					\id \step[2] \coro~ \\
					\object{A} \\
				\end{tangle} \quad .
			\end{aligned}
		\end{equation}
	\end{thm} 
	\noindent\begin{proof} 
	    By the definition of braided paring \ref{defbrpa}, braiding $\Xi$ satisfies compatibility with product and coproduct both in $A$ and $H$ \eqref{xi-prod}. Therefore
	    to demonstrate that the formula \eqref{constr} defines the braided action in the sense of the Definition \ref{Ract}, we  just need to check the properties of left action \eqref{action}, \eqref{action_comp}.
		First, we examine  \eqref{action}:
		
		\begin{equation*}
			\hstretch 90  \vstretch 60
			\begin{aligned}
				\begin{tangle}
					\object{H} \step[2] \object{H} \step[2] \object{A}\\
					\cu \step[2] \id \\
					\step[1] \hlu[6] \\
					\step[4] \id \\
					\step[4] \object{A}
				\end{tangle}
				& \quad \underset{(1)}{=} \quad 
				\begin{tangle}
					\object{H} \step[2] \object{H} \step[2] \object{A} \\
					\cu \step[1] \cd \\
					\step[1] \x {{{}^\Xi}} \step[1.31] \id \\
					\step[1] \id \step[2] \coro~ \\
					\step[1] \object{A} \\
				\end{tangle}
				\quad \underset{(2)}{=} \quad 
				\begin{tangle}
					\object{H} \step[2] \object{H} \step[3] \object{A}\\
					\id \step[2] \id \step[2] \cd \\
					\id \step[2] \x {{{}^\Xi}} \step[1.31] \nw1 \\
					\x {{{}^\Xi}} \step[1.31] \id \step[3] \id \\
					\id \step[2] \cu \step[3] \id \\
					\id \step[3] \coRo~ \\
					\object{A} \\
				\end{tangle}
				\quad \underset{(3)}{=} \quad 
				\begin{tangle}
					\object{H} \step[2] \object{H} \step[3] \object{A}\\
					\id \step[2] \id \step[2] \cd \\
					\id \step[2] \x {{{}^\Xi}} \step[1.31] \nw1\\
					\id \step[2] \id \step[2] \id \step[2] \cd \\ 
					\x {{{}^\Xi}} \step[1.31] \x {{{}^\Xi}} \step[1.31] \id \\
					\id \step[2]  \coro~ \step[2] \coro~\\
					\object{A} \\
				\end{tangle}
				\\[7 pt]
				& \qquad \underset{(4)}{=} \quad 
				\begin{tangle}
					\object{H} \step[2] \object{H} \step[4] \object{A}\\
					\id \step[2] \id \step[3] \cd \\
					\id \step[2] \id \step[2] \cd \step[1] \nw1 \\
					\id \step[2] \x {{{}^\Xi}} \step[1.31] \id \step[2] \id \\
					\x {{{}^\Xi}} \step[1.31] \x {{{}^\Xi}} \step[1.31] \id \\
					\id \step[2] \coro~ \step[2] \coro~ \\
					\object{A} \\
				\end{tangle}
				\quad \underset{(5)}{=} \quad 
				\begin{tangle}
					\object{H} \step[3] \object{H} \step[3] \object{A}\\
					\id \step[3] \id \step[2] \cd \\
					\id \step[3] \x {{{}^\Xi}} \step[1.31] \id \\
					\id \step[2] \cd \step[1] \coro~ \\
					\x {{{}^\Xi}} \step[1.31] \id \\
					\id \step[2] \coro~ \\
					\object{A}\\
				\end{tangle}
				\quad \underset{(6)}{=} \quad 
				\begin{tangle}
					\object{H} \step[2] \object{H} \step[2] \object{A}\\
					\id \step[2] \hlu[4] \\
					\hlu[8] \\
					\step[4] \id \\
					\step[4] \object{A}
				\end{tangle} \hspace{0.2 cm}.
			\end{aligned}
		\end{equation*}
		
		In the above in consecutive steps the following properties were used:
		
		\begin{enumerate}
			\setlength\itemsep{0 cm}
			\item Construction of the left action from the pairing (\ref{constr}).
			\item Compatibility of braiding $\Xi$ with the product (\ref{xi-prod}).
			\item Second property of the pairing (\ref{BrPairing}).
			\item Coassociativity of the coproduct.
			\item Compatibility of braiding $\Xi$ with the coproduct.
			\item Construction of the left action (\ref{constr}).
		\end{enumerate}
		
		Similarly, we proceed with the second property of the left action
		\eqref{action}: 
		\begin{equation*}
			\hstretch 90  \vstretch 60
			\begin{aligned}
				\begin{tangle}
					\object{H} \step[2] \object{A} \step[2] \object{A}\\
					\id \step[2] \cu \\
					\hlu[6] \\
					\step[3] \object{A}\\ 
				\end{tangle}
				&\quad \underset{(1)}{=} \quad 
				\begin{tangle}
					\object{H} \step[2] \object{A} \step[2] \object{A}\\
					\id \step[2] \cu \\   
					\id \step[2] \cd \\
					\x {{{}^\Xi}} \step[1.31] \id \\
					\id \step[2] \coro~ \\
					\object{A} \\
				\end{tangle}
				\quad \underset{(2)}{=} \quad 
				\begin{tangle}
					\object{H} \step[2] \object{A} \step[4] \object{A}\\
					\id \step[1] \cd  \step[2] \cd \\
					\id \step[1] \id \step[2] \x {{{}^\Psi}} \step[1.21] \id \\
					\id \step[1] \cu \step[2] \cu \\
					\x {{{}^\Xi}} \step[1.31] \step[2] \id \\
					\id \step[2] \coRo~ \\
					\object{A}
				\end{tangle}
				\quad \underset{(3)}{=} \quad 
				\begin{tangle}
					\object{H} \step[3] \object{A} \step[4] \object{A}\\
					\id \step[2] \cd  \step[2] \cd \\
					\id \step[2] \id \step[2] \x {{{}^\Psi}} \step[1.21] \id \\
					\x {{{}^\Xi}} \step[1.31] \id \step[2] \nw1 \step[1] \nw1\\ 
					\id \step[2] \x {{{}^\Xi}} \step[2.31]  \cu \\
					\cu \step[2] \coRo~ \\
					\step[1] \object{A} \\
				\end{tangle}
				\quad =
			\end{aligned}
		\end{equation*}
		\vfill
		\begin{equation*}
			\hstretch 90  \vstretch 60
			\begin{aligned}	
				\phantom{xssxx} & \quad  \underset{(4)}{=} \quad
				\begin{tangle}
					\object{H} \step[3] \object{A} \step[4] \object{A}\\
					\id \step[2] \cd  \step[2] \cd \\
					\id \step[2] \id \step[2] \x {{{}^\Psi}} \step[1.21] \id \\
					\x {{{}^\Xi}} \step[1.31] \id \step[2] \nw1 \step[1] \nw1\\ 
					\id \step[2] \x {{{}^\Xi}} \step[2.31]  \id \step[2] \id \\
					\cu \step[1] \cd \step[2] \id \step[2] \id \\
					\step[1] \id \step[2] \id \step[2]\x {{{}^\Xi}} \step[1.31] \id \\
					\step[1] \id \step[2] \coro~ \step[2] \coro~ \\
					\step[1] \object{A}\\
				\end{tangle}
				\quad \underset{(5)}{=} \quad 
				\begin{tangle}
					\step[1] \object{H} \step[4] \object{A} \step[4] \object{A}\\
					\cd \step[2] \cd  \step[2] \cd \\
					\id \step[2] \x {{{}^\Xi}} \step[1.31] \x {{{}^\Psi}} \step[1.21] \id \\
					\x {{{}^\Xi}} \step[1.31] \x {{{}^\Xi}} \step[1.31] \id \step[2] \id \\
					\id \step[2] \x {{{}^\Xi}} \step[1.31] \x {{{}^\Xi}} \step[1.31] \id \\
					\cu \step[2] \coro~ \step[2] \coro~ \\
					\step[1] \id \\
					\step[1] \object{A}
				\end{tangle}
				\quad \underset{(6)}{=} \quad 
				\begin{tangle}
					\step[1] \object{H} \step[4] \object{A} \step[4] \object{A} \\
					\cd \step[2] \cd  \step[2] \cd \\
					\id \step[2] \x {{{}^\Xi}} \step[1.31] \id \step[2] \id \step[2] \id\\
					\x {{{}^\Xi}} \step[1.31] \x {{{}^\Xi}} \step[1.31] \id \step[2] \id \\
					\id \step[2] \coro~ \step[2] \x {{{}^\Xi}} \step[1.31] \id \\
					\id \step[5] \ne4 \step[2] \coro~\\
					\cu \\
					\step[1] \object{A} \\
				\end{tangle}
				\quad = \\[10 pt]
				& \quad \underset{(7)}{=}
				\begin{tangle}
					\step[2] \object{H} \step[3] \object{A} \step[3] \object{A} \\
					\step[1] \cd \step[2] \id \step[2] \cd \\
					\ne1 \step[2] \x {{{}^\Xi}} \step[1.31] \id \step[2] \id \\
					\id \step[2] \cd \step[1] \x {{{}^\Xi}} \step[1.31] \id \\
					\x {{{}^\Xi}} \step[1.31] \id \step[1] \id \step[2] \coro~ \\
					\id \step[2] \coro~  \ne1 \\
					\Cu \\
					\step[2] \object{A} \\
				\end{tangle}
				\quad \underset{(8)}{=} \quad 
				\begin{tangle}
					\step[1] \object{H} \step[3] \object{A} \step[2] \object{A} \\
					\cd \step[2] \id \step[2] \id \\
					\id \step[2] \x {{{}^\Xi}} \step[1.31] \id \\
					\hlu[4] \step[2] \hlu[4] \\
					\step[2] \Cu \\
					\step[4] \object{A} \\
				\end{tangle} \hspace{0.2 cm},
			\end{aligned}
		\end{equation*}
		Where in the consecutive steps the following properties were used:
		\begin{enumerate}
			\setlength\itemsep{0 cm}
			\item Construction of the left action (\ref{constr}).
			\item  Morphism property of coproduct in a braided category. 
			\item Compatibility of braiding $\Xi$ with the product (\ref{xi-prod}).
			\item First property of the pairing (\ref{BrPairing}).
			\item Compatibility of braiding $\Xi$ with the coproduct (\ref{xi-prod}).
			\item First additional property of pairing (\ref{BrPair2}).
			\item Compatibility of braiding $\Xi$ with the coproduct (\ref{xi-prod}).
			\item Construction of the left action (\ref{constr}).
		\end{enumerate}
		
		\vfill
		Finally, we explicitly demonstrate the compatibility of the left action with braided map.
		
		\begin{equation*}
			\hstretch 90  \vstretch 60
			\begin{aligned}
				&\begin{tangle}
					\object{H} \step[2] \object{H}\step[2]\object{A}\\
					\id \step[2] \id \step[2] \id \step[2]\\
					\id \step[2] \hlu[4] \\
					\nw2 \step[3] \id \\
					\step[2] \x {{{}^\Xi}} \\
					\step[2] \object{A} \step[2] \object{H}\\
				\end{tangle}
				\quad \underset{(1)}{=} \quad 
				\begin{tangle}
					\object{H} \step[2] \object{H}\step[3]\object{A}\\
					\id \step[2] \id  \step[2] \cd \\
					\id \step[2] \x {{{}^\Xi}} \step[1.31] \id \\
					\x {{{}^\Xi}} \step[1.31] \coro~ \\
					\object{A} \step[2] \object{H}
				\end{tangle} 
				\quad \underset{(2)}{=} \quad 
				\begin{tangle}
					\step[1] \object{H} \step[2] \object{H}\step[2]\object{A}\\
					\step[1] \x {{{}^\Psi}} \step[1.21] \id \\
					\ne1 \step[2] \x {{{}^\Xi}} \step[1.31] \\
					\id \step[2] \cd \step[1] \nw1 \\
					\x {{{}^\Xi}} \step[1.31] \id \step[2] \id \\
					\id \step[2] \coro~ \step[2] \id \\
					\object{A} \step[6] \object{H} \\
				\end{tangle}
				\quad \underset{(3)}{=} \quad
				\begin{tangle}
					\object{H} \step[2] \object{H} \step[2] \object{A} \\  
					\x {{{}^\Psi}} \step[1.21] \id \\
					\id \step[2] \x {{{}^\Xi}} \\
					\hlu[4] \step[2] \id \\
					\step[2] \object{A} \step[3] \object{H}\\
				\end{tangle} \hspace{0.2 cm},
			\end{aligned}
		\end{equation*}
		where in (1) we used the construction of the left action (\ref{constr}), in (2) the second additional property of pairing (\ref{BrPair2}) and in (3)
		again the construction of the left action (\ref{constr}).
	\end{proof}
	\vfill

	\section{The algebra $\cU_{q,\phi}(\hat{u}(2))$}
	
	In this section, we discuss particular families of braided Hopf algebras, 
	which generalize the standard examples of $\cU_{q}(su(2))$ and $SU_{q}(2)$, preserving the relations between them. Because a large part of this section is based on \cite{arek}, we provide 
	a short summary of the most important results. 
	
	\begin{lem}\cite[Lemma 2.1]{arek}
		Let $H$ be a Hopf algebra and let $\chi: H \rightarrow \fC[z, z^{-1}]$ be a Hopf algebra homomorphism, then the following maps are equivariant under the coproduct in $H$:
		\begin{equation*}
			\begin{aligned}
				\Delta_L = (\chi \otimes \id)\Delta, ~~&&  \Delta_R = (\id \otimes \chi)\Delta .
			\end{aligned}
		\end{equation*}
	\end{lem}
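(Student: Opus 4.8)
The plan is to translate the word ``equivariant'' into a pair of intertwining identities and then reduce each of them to coassociativity of $\Delta$. Concretely, I read the statement as asserting that $\Delta_L$ and $\Delta_R$ commute with the comultiplication of $H$ in the comodule sense: if $H$ is given its regular coaction via $\Delta$, and $\Delta$ is made to act on the $H$-tensorand of the targets $\fC[z,z^{-1}]\otimes H$ and $H\otimes\fC[z,z^{-1}]$, then $\Delta_L$ and $\Delta_R$ are $H$-comodule morphisms, i.e.
\[
(\id\otimes\Delta)\circ\Delta_L=(\Delta_L\otimes\id)\circ\Delta,
\qquad
(\Delta\otimes\id)\circ\Delta_R=(\id\otimes\Delta_R)\circ\Delta .
\]

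First I would fix notation, writing $\Delta_L(h)=\chi(h_{(1)})\otimes h_{(2)}$ and $\Delta_R(h)=h_{(1)}\otimes\chi(h_{(2)})$ in Sweedler form. Evaluating the first identity on a general $h\in H$, both sides collapse to one common expression,
\[
(\id\otimes\Delta)\,\Delta_L(h)=\chi(h_{(1)})\otimes h_{(2)}\otimes h_{(3)}=(\Delta_L\otimes\id)\,\Delta(h),
\]
where each equality is just coassociativity of $\Delta$ rewriting the iterated comultiplication. The argument for $\Delta_R$ is the mirror image, again collapsing to $h_{(1)}\otimes h_{(2)}\otimes\chi(h_{(3)})$. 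This is the whole computation, and I would present it either in Sweedler notation as above or, to match the diagrammatic style of the paper, as a short manipulation whose only move is sliding $\chi$ past the iterated comultiplication using coassociativity.

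I do not expect a genuine obstacle, since the content is pure bookkeeping with the coproduct. The one place where the hypotheses on $\chi$ must actually be invoked is if one also wants to record that $\Delta_L,\Delta_R$ are bona fide $\fC[z,z^{-1}]$-coactions, which I would verify in passing: coassociativity of the coaction, $(\Delta_{\fC[z,z^{-1}]}\otimes\id)\,\Delta_L=(\id\otimes\Delta_L)\,\Delta_L$, uses precisely that $\chi$ is a coalgebra homomorphism, while the counit normalization $(\epsilon_{\fC[z,z^{-1}]}\otimes\id)\,\Delta_L=\id$ uses $\epsilon_{\fC[z,z^{-1}]}\circ\chi=\epsilon$. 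The only subtlety worth watching is keeping the placement of the $\fC[z,z^{-1}]$ factor consistent between the left-handed and right-handed versions, so that the two intertwining identities are stated on the correct side.
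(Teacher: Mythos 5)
Your argument is correct: reading ``equivariant'' as the two intertwining identities and reducing each to coassociativity of $\Delta$ (plus the coalgebra-homomorphism property of $\chi$ for the coaction axioms) is the standard and essentially unique proof. The paper itself states this lemma without proof, citing \cite{arek}, so there is no in-text argument to compare against; your computation matches what that reference does.
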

	Having the maps $\Delta_L$, $\Delta_R$,  one can define an element $x \in H$ to be homogeneous of degrees $\mu(x)$, $\nu(x)$ iff 
	\begin{equation*}
		\begin{aligned}
			\Delta_L(x) = z^{\mu(x)}\otimes x,
			~~&& \Delta_R(x) = x \otimes z^{\nu(x)}.
		\end{aligned}
	\end{equation*}
	The function $\delta(x) := \mu(x) - \nu(x)$, defined for homogeneous $x$, is the called an index of $x$. Moreover, 
	any element $x$, which can be expressed as a product of homogeneous 
	elements $x = \prod_{i = 1}^n x_i$, is also homogeneous with 
	degrees $\mu(x)$, $\nu(x)$ and index $\delta(x)$ given by
	\begin{equation}
		\label{hom1}
		\begin{aligned}
			& \mu(x) = \sum_{i = 1}^n \mu(x_i), && \nu(x) = \sum_{i = 1}^n \nu(x_i), && \delta(x) = \sum_{i = 1}^n \delta(x_i).
		\end{aligned}
	\end{equation}

	\begin{thm} \cite[Theorems 2.13, 2.14]{arek}\label{Thm33} 
		Let $H$ be an Hopf algebra, $\chi: H \rightarrow \fC[z, z^{-1}]$ a Hopf algebra homomorphism   and $\phi \in [0, 2 \pi)$ an arbitrary phase. If $H$ has a countable basis of homogeneous elements, then it can be deformed into braided Hopf algebra with new actions defined in this homogeneous basis as follows
		\begin{equation*}
			\begin{aligned}
				& x \ast y  = e^{i \phi(\mu(x)\nu(y) - \mu(y)\nu(x))} xy,  &\qquad
				& \Delta_{\phi} (x) = \sum_{j} e^{ i \phi \delta(x_{(1)}^j) \delta(x_{(2)}^j) } x_{(1)}^j \otimes x_{(2)}^j, \\
				& S_{\phi} (x) = e^{ i \phi \delta(x)^2} S(x), \quad 
				&\epsilon_{\phi}(x) = \epsilon(x), \quad
				& \qquad \Psi (x \otimes y ) = e^{2 i \phi \delta(x) \delta(y)} y \otimes x .
			\end{aligned}
		\end{equation*}
	\end{thm}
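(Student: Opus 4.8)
The plan is to recognize the deformation of Theorem~\ref{Thm33} as a graded cocycle twist governed by the $\fZ\times\fZ$-bigrading $(\mu,\nu)$ that $\chi$ places on $H$, and to organize the verification around a single braided monoidal category. Concretely, I would first record the elementary grading lemmas on which every subsequent computation rests: the degrees add under the product (this is \eqref{hom1}); the counit and antipode of $\fC[z,z^{-1}]$ force $\chi(x)=\epsilon(x)z^{\mu(x)}=\epsilon(x)z^{\nu(x)}$, so that $\epsilon(x)\neq0$ implies $\mu(x)=\nu(x)$, i.e. $\epsilon$ is supported on index $\delta=0$; the anti-coalgebra property of $S$ gives $\mu(S(x))=-\nu(x)$ and $\nu(S(x))=-\mu(x)$, hence $\delta(S(x))=\delta(x)$; and, writing $\Delta(x)=\sum x_{(1)}\otimes x_{(2)}$ for homogeneous $x$, matching the left/right gradings shows $\mu(x_{(1)})=\mu(x)$, $\nu(x_{(2)})=\nu(x)$ and that the inner degrees coincide, $\nu(x_{(1)})=\mu(x_{(2)})$, so in particular $\delta(x_{(1)})+\delta(x_{(2)})=\delta(x)$. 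These are the only facts about $H$ I will use.

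With these in hand I would set up the target category $\mathcal C$ of $\fZ$-graded vector spaces graded by the index $\delta$, equipped with the diagonal braiding $c(v\otimes w)=e^{2i\phi\,\delta(v)\delta(w)}\,w\otimes v$, and prove that $(H,\ast,\Delta_\phi,S_\phi,\epsilon)$ is a Hopf algebra object in $\mathcal C$; the stated $\Psi$ is then exactly the self-braiding of $H$ in $\mathcal C$. That $c$ is a genuine braiding (Yang--Baxter) is immediate because $(n,m)\mapsto e^{2i\phi nm}$ is a bicharacter on $\fZ$, and that $\ast,\Delta_\phi,S_\phi,\epsilon$ are morphisms in $\mathcal C$ follows from the grading lemmas: $\ast$ and $\Delta_\phi$ preserve total index, $S_\phi$ preserves $\delta$, and $\epsilon$ lives in degree $0$.

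The axioms internal to a single object then reduce to scalar identities in the integer degrees. Associativity of $\ast$ is the cocycle condition for $\sigma(x,y)=e^{i\phi(\mu(x)\nu(y)-\mu(y)\nu(x))}$, which holds because $\sigma$ is an \emph{antisymmetric} bicharacter: additivity of $\mu,\nu$ makes both $\sigma(x,y)\sigma(xy,z)$ and $\sigma(y,z)\sigma(x,yz)$ collapse to the same exponent. Coassociativity of $\Delta_\phi$ reduces, via $\delta(x_{(1)})+\delta(x_{(2)})=\delta(x)$ and coassociativity of $\Delta$, to the symmetric identity $\delta_1\delta_2+(\delta_1+\delta_2)\delta_3=\delta_1(\delta_2+\delta_3)+\delta_2\delta_3$ on the threefold coproduct. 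The counit axiom is inherited from $\Delta$ because on every surviving term $\epsilon(x_{(1)})\neq0$ forces $\delta(x_{(1)})=0$, killing the coproduct phase. For the antipode I would feed $S_\phi$ into the braided convolution $\ast\circ(S_\phi\otimes\id)\circ\Delta_\phi$: using $\mu(S(x_{(1)}))=-\nu(x_{(1)})$ and the inner-degree relation, all three phases (antipode, product, coproduct) combine to the single factor $e^{i\phi\,\mu(x)\delta(x)}$, which is \emph{independent of the Sweedler summation index} and hence pulls out of the sum; the classical identity $\sum S(x_{(1)})x_{(2)}=\epsilon(x)1$ then finishes, the residual phase being trivial because $\epsilon$ is supported on $\delta=0$.

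The main obstacle is the braided bialgebra compatibility $\Delta_\phi(x\ast y)=\Delta_\phi(x)\bullet_\Psi\Delta_\phi(y)$, where $\bullet_\Psi$ is the braided tensor-product multiplication $(a\otimes b)\bullet_\Psi(c\otimes d)=e^{2i\phi\,\delta(b)\delta(c)}(a\ast c)\otimes(b\ast d)$. This is the one step where the product phase $\sigma$, both coproduct phases, and the braiding phase appear simultaneously, and where the inner-degree bookkeeping $\nu(x_{(1)})=\mu(x_{(2)})$ is genuinely needed. Writing $\mu(x)=a$, $\nu(x)=b$ with intermediate degree $p=\nu(x_{(1)})=\mu(x_{(2)})$, and similarly $c,d,r$ for $y$, I would expand both sides on the common monomial $x_{(1)}y_{(1)}\otimes x_{(2)}y_{(2)}$ and check that the two exponents coincide; the factor $2$ in the braiding $e^{2i\phi\,\delta(x_{(2)})\delta(y_{(1)})}$ is precisely what makes the cross terms cancel. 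I expect this identity to be the crux, with everything else amounting to bookkeeping organized by the category $\mathcal C$.
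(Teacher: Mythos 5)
The paper does not actually prove Theorem \ref{Thm33}: it is quoted verbatim from \cite{arek} and the reader is referred there, so there is no internal proof to compare against. Judged on its own, your proposal is correct and is essentially the natural way to establish the result: a cocycle/bicharacter twist organized as the verification that $(H,\ast,\Delta_\phi,S_\phi,\epsilon)$ is a Hopf algebra object in the category of $\fZ$-graded vector spaces with diagonal braiding $e^{2i\phi\,\delta(v)\delta(w)}$. Your preliminary grading lemmas are all right and are exactly the needed inputs: $\chi(x)=\epsilon(x)z^{\mu(x)}=\epsilon(x)z^{\nu(x)}$ so that $\epsilon$ is supported on $\delta=0$ (which is what kills the residual phases in the counit and antipode axioms), $\mu(S(x))=-\nu(x)$, $\nu(S(x))=-\mu(x)$, and the matching of inner degrees $\nu(x_{(1)})=\mu(x_{(2)})$. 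I checked your two announced exponent computations and they close: the braided convolution $\ast\circ(S_\phi\otimes\id)\circ\Delta_\phi$ produces the phase $e^{i\phi\,\mu(x)\delta(x)}$ uniformly in the Sweedler index, and in the bialgebra compatibility the exponents on both sides reduce (in your notation $a,b,p$ and $c,d,r$) to $ar-cp+pd-rb$, with the factor $2$ in $\Psi$ indeed indispensable for the cross terms to cancel. Two small points you should make explicit in a written version: (i) the statement ``$\mu(x_{(1)})=\mu(x)$, $\nu(x_{(2)})=\nu(x)$, $\nu(x_{(1)})=\mu(x_{(2)})$'' requires choosing the Sweedler components homogeneous and linearly independent, which is where the hypothesis of a countable homogeneous basis enters and should be invoked; (ii) both antipode identities $\ast\circ(S_\phi\otimes\id)\circ\Delta_\phi=\eta\epsilon=\ast\circ(\id\otimes S_\phi)\circ\Delta_\phi$ need checking (the second gives the phase $e^{-i\phi\,\nu(x)\delta(x)}$, again trivial on the support of $\epsilon$). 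With those additions the argument is complete, and its categorical packaging is arguably cleaner than a bare axiom-by-axiom verification.
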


	For more information about this method of constructing braided Hopf algebras, called twisting, we refer the reader to  \cite{arek}.
	
	\subsection{Quantum $SU_{q,\phi}(2)$}
	
	One of the simplest yet nontrivial examples of braided Hopf algebras is $SU_{q,\phi}(2)$. In \cite{arek} it was shown that such an algebra can be obtained by a  twisting from its well-known non-braided counterpart $SU_{q}(2)$ presented for example in \cite{dirac, su2} .
	
	Here we introduce $SU_{q,\phi}(2)$ using the notation similar to \cite{podles}. 
	
	\begin{defn}
		Let $0 \leq q \leq 1$ be a real number and $\phi \in [0, 2 \pi)$ an arbitrary phase. The braided Hopf algebra $SU_{q, \phi}(2)$ over $\fC$ is an algebra generated by $\alpha$, $\gamma$ and their adjoints $\alpha^*$, $\gamma^*$, subject to the following commutation relations:
		\begin{equation*}
			\begin{aligned}
				& \alpha \gamma = q e^{4 i \phi} \gamma \alpha, && \alpha \gamma^* = q e^{-4 i \phi} \gamma^* \alpha ,\\
				& \gamma \alpha^* = q e^{4i \phi} \alpha^* \gamma, && \gamma^* \alpha^* = q e^{-4 i \phi} \alpha^* \gamma^* ,\\
				& \alpha \alpha^* + q^2  \gamma^* \gamma = 1, & \qquad \alpha^* \alpha + \gamma \gamma^* = 1, \qquad & \gamma \gamma^* = \gamma^* \gamma .\\
			\end{aligned}   
		\end{equation*}
		The coproduct is defined on the generators as:
		\begin{equation*}
			\begin{aligned}
				& \Delta(\alpha) = \alpha \otimes \alpha - q e^{-4 i \phi} \gamma^* \otimes \gamma,  
				& \qquad &  \Delta(\gamma) = \gamma \otimes \alpha + \alpha^* \otimes \gamma, \\
				&\Delta(\alpha^*) = \alpha^* \otimes \alpha^* - q e^{-4 i \phi} \gamma \otimes \gamma^*,   
				&& \Delta(\gamma^*) = \gamma^* \otimes \alpha^* + \alpha \otimes \gamma^*, \\
			\end{aligned}
		\end{equation*}
		with the following counit and antipode,
		\begin{equation*}
			\begin{aligned}
				&\epsilon(\alpha) = 1, \qquad &&\epsilon(\alpha^*) = -1, \qquad &&\epsilon(\gamma) = 0,\qquad 
				&&\epsilon(\gamma^*) = 0, \\
				& S(\alpha) = \alpha^*, 
				&& S(\alpha^*) = \alpha ,
				&& S(\gamma) = -qe^{4 i \phi} \gamma ,
				&& S(\gamma^*) = -q^{-1} e^{4 i \phi}\gamma^* .\\
			\end{aligned}
		\end{equation*}
		and braiding:
		\begin{equation*}
			\begin{aligned}
				&\Psi(\gamma \otimes\gamma) = e^{8i\phi} \gamma \otimes\gamma, \qquad &&
				\Psi(\gamma^* \otimes \gamma^*)  = e^{8i\phi} \gamma^* \otimes\gamma^*, \\
				&\Psi(\gamma^* \otimes\gamma) = e^{-8i\phi} \gamma \otimes\gamma^*, \qquad &&
				\Psi(\gamma \otimes \gamma^*) = e^{-8i\phi} \gamma^* \otimes\gamma, \\
			\end{aligned}
		\end{equation*}
		which is trivial for all other pairs of generators and extends uniquely on the entire basis by property \eqref{hom1}. 
	\end{defn}
	
	\begin{rem}\cite[ Lemma 3.5]{arek}
		The braided Hopf algebra $SU_{q,\phi}$ is a deformation of $SU_q(2) = SU_{q,0}(2)$ following the scheme presented in Theorem \ref{Thm33} with
		\begin{equation*}
			\begin{aligned}	
				&\mu(\alpha) = 1, \quad &\mu(\alpha^*)=-1, \quad &\mu(\gamma) =-1, &&\mu(\gamma^*) = 1, \\
				&\nu(\alpha) = 1, \quad &\nu(\alpha^*)=-1, \quad &\nu(\gamma) =1, &&\nu(\gamma^*) = -1. 
			\end{aligned}
		\end{equation*}
		
	\end{rem}	
	
	\subsection{Quantum $\cU_{q,\phi}(\hat{u}(2))$}
	
	The classical counterpart of $SU_{q, \phi}(2)$, Hopf algebra $SU_{q}(2)$ has a non degenerate pairing with a Hopf algebra $\cU_{q}(su(2))$. However, to date, there have been no known braided analogues of $\cU_{q}(su(2))$ that preserves this paring. In this subsection we shall construct a braided analogue of $\cU_{q}(su(2))$, fulfilling the above properly, in the spirit of twisting.
	For the sake of completeness, we will start by invoking well-known definitions and properties that hold for (non-braided) $\cU_{q}(su(2))$.
	
	\begin{defn}
		Let $0 \leq q \leq 1$ be a real number. The Hopf algebra $\cU_q(su(2))$ over $\fC$ is an algebra generated by elements $e$, $f$, and an invertible element $k$, with product relations:
		
		\begin{equation}
			\label{uq_pr}
			\begin{aligned}
				& e k  = q k e, && kf = q fk, \\
				& e k^{-1}  = q^{-1} k^{-1} e, && k^{-1}f = q^{-1} fk^{-1}, \\
				& k^2 - k^{-2} = (fe - ef)(q - q^{-1}), \\
			\end{aligned}
		\end{equation}
		
		the coproduct defined by:
		
		\begin{equation}
			\label{uq_cpr}
			\begin{aligned}  
				&\Delta(k) = k \otimes k, && \Delta(k^{-1}) = k^{-1} \otimes k^{-1}, \\
				&\Delta(e) = e \otimes k + k^{-1} \otimes e, && \Delta(f) = f \otimes k  + k^{-1} \otimes f, \\ 
			\end{aligned}
		\end{equation}
		
		and counit $\epsilon$, antipode $S$ and $*$- structure:
		
		\begin{equation*}
			\begin{aligned}
				&\epsilon(k) = 1, &~~& S(k) = k^{-1}, &~~& k^* = k, \\
				&\epsilon(k^{-1}) = 1, &~~& S(k^{-1}) = k, &~~& (k^{-1})^* = k^{-1}, \\
				&\epsilon(f) = 0, &~~& S(f) =  -q f, &~~& f^* = e, \\
				&\epsilon(e) = 0, &~~& S(e) = -q^{-1} e, &~~& e^* = f. \\
			\end{aligned}
		\end{equation*}
	\end{defn}
	We recall the basic Theorem about the relation between 
	$\cU_q(su(2))$ and $SU_q(2)$ .
	
	\begin{thm} \cite{dirac}
		There exists a bilinear pairing between $\cU_q(su(2))$ and $SU_q(2)$ defined on generators by:
		\begin{equation*}
			\begin{aligned}
				& \langle e, \gamma^*\rangle = -q^{-1}, && \langle f,\gamma\rangle = 1, \\
				&\langle k, \alpha\rangle = q^{-1/2}, && \langle k, \alpha^*\rangle = q^{1/2}, \\
				&\langle k^{-1}, \alpha\rangle = q^{1/2}, && \langle k^{-1}, \alpha^*\rangle = q^{-1/2}, \\
			\end{aligned}
		\end{equation*}
		with all the other pairs of generators pairing to 0. 
		The paring gives rise to an action $\cU_q(su(2))$ on $SU_q(2)$ defined by $h \la a = a_{(1)} \langle h, a_{(2)} \rangle$, which is given explicitly on the generators as:
		\begin{equation*}
			\begin{aligned}
				& k \la \alpha = q^{-1/2} \alpha, \hspace{0.5 cm}&& k \la \alpha^* = q^{1/2} \alpha^*, \hspace{0.5 cm}&& k \la \gamma = q^{-1/2} \gamma, \hspace{0.5 cm}&& k \la \gamma^* = q^{1/2} \gamma^*,\\
				& k^{-1} \la \alpha = q^{1/2} \alpha, \hspace{0.5 cm}&& k^{-1} \la \alpha^* = q^{-1/2} \alpha^*, \hspace{0.5 cm}&& k^{-1} \la \gamma = q^{1/2} \gamma, \hspace{0.5 cm}&& k^{-1} \la \gamma^* = q^{-1/2} \gamma^*,\\
				& f \la \alpha = -q \gamma^*, \hspace{0.5 cm}&& f \la \alpha^* = 0, \hspace{0.5 cm}&& f \la \gamma = \alpha^*, \hspace{0.5 cm}&& f \la \gamma^* =0,\\
				& e \la \alpha = 0, \hspace{0.5 cm}&& f \la \alpha^* = \gamma, \hspace{0.5 cm}&& e \la \gamma = 0, \hspace{0.5 cm}&& e \la \gamma^* = -q^{-1} \alpha.\\
			\end{aligned}
		\end{equation*}
	\end{thm}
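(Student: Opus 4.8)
The plan is to prove the statement in three movements: (i) show that the table of generator values extends to a genuine bilinear pairing, i.e.\ that it is consistent with the defining relations of \emph{both} algebras; (ii) verify the four compatibility conditions \eqref{pairingcond}; and (iii) read off the action from \eqref{non_br_la}. For (i), the multiplicativity conditions in \eqref{pairingcond} together with $\langle 1,a\rangle=\epsilon(a)$ and $\langle h,1\rangle=\epsilon(h)$ already determine $\langle h,a\rangle$ on every pair of monomials: a product in $\cU_q(su(2))$ is paired against a generator of $SU_q(2)$ through the coproduct of $SU_q(2)$, and a generator of $\cU_q(su(2))$ against a product through \eqref{uq_cpr}. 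The only real issue is that this recipe must descend through the relations on each side.

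The most economical way I would organize this is representation-theoretic. Arrange the generators of $SU_q(2)$ into the fundamental matrix $u=\left(\begin{smallmatrix}\alpha & -q\gamma^*\\ \gamma & \alpha^*\end{smallmatrix}\right)$, for which the coproduct takes the form $\Delta(u_{ij})=\sum_k u_{ik}\otimes u_{kj}$, and let $\rho:\cU_q(su(2))\to M_2(\fC)$ be the fundamental representation $\rho(k)=\mathrm{diag}(q^{-1/2},q^{1/2})$, $\rho(e)=\left(\begin{smallmatrix}0&1\\0&0\end{smallmatrix}\right)$, $\rho(f)=\left(\begin{smallmatrix}0&0\\1&0\end{smallmatrix}\right)$, which one checks directly to satisfy \eqref{uq_pr}. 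The given table is exactly $\langle h,u_{ij}\rangle=\rho(h)_{ij}$. I would then make two checks. First, each generator of $\cU_q(su(2))$ must give a \emph{well-defined} functional on the quotient $SU_q(2)$, i.e.\ annihilate the commutation and sphere relations; since $k^{\pm1}$ are group-like and $e,f$ act as skew derivations via \eqref{uq_cpr}, this is where the fundamental $R$-matrix enters, and I expect it to be the main obstacle. Second, these four functionals must satisfy \eqref{uq_pr} under convolution, which collapses to the matrix relations for $\rho$ already verified. By the universal property of $\cU_q(su(2))$ the assignment then extends to an algebra homomorphism $\cU_q(su(2))\to SU_q(2)^{\circ}$, which is the pairing: it respects the relations of $\cU_q(su(2))$ by construction and those of $SU_q(2)$ because its values are genuine functionals on the quotient. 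Concretely the obstacle amounts to verifying $\langle h,r\rangle=0$ for each defining relation $r$ of $SU_q(2)$, and by multiplicativity in the $\cU_q(su(2))$ slot it suffices to test $h$ on $\{k^{\pm1},e,f\}$.

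For (ii), the multiplicativity conditions hold by the construction in (i). The antipode compatibility $\langle S(h),a\rangle=\langle h,S(a)\rangle$ is then automatic, being a general consequence of a bialgebra pairing between Hopf algebras; alternatively it is a finite check on generators, e.g.\ $\langle S(k),\alpha\rangle=\langle k^{-1},\alpha\rangle=q^{1/2}=\langle k,\alpha^*\rangle=\langle k,S(\alpha)\rangle$. The $*$-condition $\langle (Sh)^*,a\rangle=\overline{\langle h,a^*\rangle}$ I would likewise verify on the generators using the $*$-structures of both algebras (for instance $h=e$ gives $(Se)^*=-q^{-1}f$, and both $\langle -q^{-1}f,\gamma\rangle$ and $\overline{\langle e,\gamma^*\rangle}$ equal $-q^{-1}$), extending by multiplicativity together with the fact that $*$ reverses products while $S$ is an anti-homomorphism.

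For (iii), the formula \eqref{non_br_la} already yields a left module structure once the pairing exists, so it only remains to evaluate $h\la a=a_{(1)}\langle h,a_{(2)}\rangle$ on generators. Feeding the coproducts of $\alpha,\alpha^*,\gamma,\gamma^*$ into \eqref{non_br_la} and using the table reproduces the displayed action: for example $\Delta(\alpha)=\alpha\otimes\alpha-q\gamma^*\otimes\gamma$ gives $f\la\alpha=\alpha\langle f,\alpha\rangle-q\gamma^*\langle f,\gamma\rangle=-q\gamma^*$, and $\Delta(\gamma^*)=\gamma^*\otimes\alpha^*+\alpha\otimes\gamma^*$ gives $e\la\gamma^*=\alpha\langle e,\gamma^*\rangle=-q^{-1}\alpha$. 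The remaining entries follow the same pattern, so this last stage is routine once (i) is in place.
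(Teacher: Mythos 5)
The paper offers no proof of this theorem at all: it is recalled verbatim from \cite{dirac} as background for the braided construction that follows, so there is no argument of the paper's to compare yours against. Judged on its own terms, your proposal is the standard and correct route. The matrix bookkeeping is right: with $u=\left(\begin{smallmatrix}\alpha & -q\gamma^*\\ \gamma & \alpha^*\end{smallmatrix}\right)$ the coproduct of $SU_q(2)$ is indeed matrix comultiplication, your $\rho$ does satisfy \eqref{uq_pr} (e.g.\ $\rho(f)\rho(e)-\rho(e)\rho(f)=\mathrm{diag}(-1,1)$ matches $(q-q^{-1})^{-1}(\rho(k)^2-\rho(k)^{-2})$), and the table is exactly $\langle h,u_{ij}\rangle=\rho(h)_{ij}$. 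Part (iii) is a mechanical substitution into \eqref{non_br_la} and your two sample computations are correct; the antipode compatibility being automatic for a bialgebra pairing of Hopf algebras (uniqueness of convolution inverses) is also a legitimate shortcut.

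The one place where your write-up stops short of a proof is precisely the step you flag as "the main obstacle": checking that the functionals $\langle k^{\pm1},\cdot\rangle$, $\langle e,\cdot\rangle$, $\langle f,\cdot\rangle$, defined on the free $*$-algebra via \eqref{uq_cpr}, annihilate the two-sided ideal generated by the seven defining relations of $SU_q(2)$ (and dually that the relations \eqref{uq_pr} hold under convolution beyond the fundamental corepresentation, i.e.\ on all monomials, which follows once well-definedness is settled and $\rho$ satisfies them). This is a finite computation --- $k^{\pm1}$ are characters so one evaluates each relation multiplicatively, and $e,f$ are $(\rho(k),\rho(k)^{-1})$-skew derivations so one applies the twisted Leibniz rule to each quadratic relation --- but it is the entire mathematical content of the existence claim, and you assert rather than perform it. To count as a complete proof you would need to display at least one representative case (say $\langle e,\,\alpha\gamma - q\gamma\alpha\rangle = \langle e_{(1)},\alpha\rangle\langle e_{(2)},\gamma\rangle - q\langle e_{(1)},\gamma\rangle\langle e_{(2)},\alpha\rangle$, which vanishes since $e$ pairs nontrivially only with $\gamma^*$) and note that the rest are identical in kind. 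With that caveat, the architecture of the argument is sound and is the same one used in the cited source.
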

	
	To proceed with a suitable candidate for the braided version
	of $\cU_{q}(su(2))$ we need to acknowledge that this Hopf
	algebra cannot be braided using the same procedure, as there exists no nontrivial homomorphism $\chi: \cU_{q}(su(2)) \rightarrow \fC[z, z^{-1}]$, that would make the generators $e$, $f$ and $k$ homogeneous. Therefore we need to modify the structure of the algebra or, in fact, work with a slightly bigger algebra. To see how
	we derive the desired form let us assume that all maps in $\cU_q(su(2))$ differ from the standard ones only
	by a scalar factor of modulus one, with $\phi$ being a free 
	parameter. Moreover, we assume that in the case of $\phi = 0$ new braided Hopf algebra reduces to $\cU_q(su(2))$.
	
	Let us have a closer look at the constraints for the proposed braiding procedure.
	
	\begin{lem}\label{lem_br}
   Under the assumptions described above 
		the only non-zero braiding phases between generators $e$, $f$, $k$, $k^{-1}$ may be $\varphi(e,e)$, $\varphi(e,f)$, $\varphi(f,e)$ and $\varphi(f,f)$, with the following relation:
		\begin{equation}
			\varphi(e,e) = -\varphi(e,f) = -\varphi(f,e) = \varphi(f,f),
		\end{equation}
	\end{lem}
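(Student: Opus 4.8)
The plan is to work entirely under the diagonal ansatz motivated in the paragraph preceding the lemma, namely that on generators the braiding has the form $\Psi(x\otimes y)=\varphi(x,y)\,y\otimes x$ with each factor a phase of modulus one, and to read off all constraints from the single demand that these data assemble into a braided Hopf algebra. Concretely I would use two structural requirements: that $\Delta$ be an algebra homomorphism into the braided tensor product, and that $\Psi$ be compatible with the products as in \eqref{xi-prod} (with $\Psi$ playing the role of $\Xi$).

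First I would show that every braiding factor in which $k$ or $k^{-1}$ occurs is trivial. Since $k$ is grouplike, applying $\Delta$ to $k^2$ and comparing $\Delta(k^2)=k^2\otimes k^2$ with $\Delta(k)\Delta(k)=\varphi(k,k)\,k^2\otimes k^2$ computed in the braided tensor product forces $\varphi(k,k)=1$; the same computation on $kk^{-1}=1$ gives $\varphi(k,k^{-1})=1$. Feeding these back into the commutation relations, I would apply $\Delta$ to $ek=qke$ and compare the coefficients of $\,\cdot\otimes k^2$ and of $1\otimes\cdot$, obtaining $\varphi(k,e)=\varphi(k,k)=1$ and $\varphi(e,k)=\varphi(k,k^{-1})=1$. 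Repeating this bookkeeping with $kf=qfk$, $ek^{-1}=q^{-1}k^{-1}e$ and $k^{-1}f=q^{-1}fk^{-1}$ kills all remaining factors containing $k^{\pm1}$. Hence the only factors that may be nontrivial are $\varphi(e,e)$, $\varphi(e,f)$, $\varphi(f,e)$, $\varphi(f,f)$, which is the first assertion of the lemma.

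The relation among these four is then extracted from the only defining relation that couples $e$ and $f$, namely $k^2-k^{-2}=(q-q^{-1})(fe-ef)$. Braiding a generator $z$ through this identity from the right, the left-hand side is carried trivially by the previous step, while product-compatibility \eqref{xi-prod} carries both $fe$ and $ef$ past $z$ with the common factor $\varphi(z,e)\varphi(z,f)$; consistency then forces $\varphi(z,e)+\varphi(z,f)=0$ in the additive (phase-exponent) convention of the statement. Braiding $z$ through the same identity from the left gives symmetrically $\varphi(e,z)+\varphi(f,z)=0$. Specializing $z=e$ and $z=f$ yields $\varphi(e,e)=-\varphi(e,f)$, $\varphi(e,e)=-\varphi(f,e)$, $\varphi(f,f)=-\varphi(f,e)$ and $\varphi(f,f)=-\varphi(e,f)$, which together collapse to the claimed chain $\varphi(e,e)=-\varphi(e,f)=-\varphi(f,e)=\varphi(f,f)$.

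The main obstacle I anticipate is the first step: establishing that the grouplike $k$ braids trivially with everything \emph{without} a global grading homomorphism $\chi$ to invoke, since the excerpt explicitly records that none exists for $\cU_q(su(2))$. This forces one to read the triviality directly off the multiplicativity of $\Delta$ together with the three families of commutation relations, which is some careful coefficient comparison rather than a single clean appeal to the index $\delta(k)=0$. Once the triviality of the $k^{\pm1}$-factors is secured, the relation among the $e,f$-factors is a short consistency check on the single quadratic relation $k^2-k^{-2}=(q-q^{-1})(fe-ef)$.
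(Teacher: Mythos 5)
Your proposal is correct, and its second half --- pushing a generator $z$ through the relation $k^2-k^{-2}=(q-q^{-1})(fe-ef)$, noting that the left-hand side braids trivially while the right-hand side picks up $\varphi(z,e)+\varphi(z,f)$ (and symmetrically from the other side), then specializing $z=e,f$ --- is exactly the paper's argument. Where you genuinely diverge is the first half, the triviality of all factors involving $k^{\pm1}$. The paper gets this in one line from the compatibility of the braiding with the coproduct applied to the grouplike element: writing $\Delta_\phi(k)=c\,k\otimes k$, the two sides of the hexagon-type identity on $k\otimes h$ produce $e^{i\phi\varphi(k,h)}$ versus $e^{2i\phi\varphi(k,h)}$, and arbitrariness of $\phi$ forces $\varphi(k,h)=0$ for \emph{all} $h$ simultaneously (the factors $\varphi(h,k)$ being handled by the mirror identity). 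You instead use multiplicativity of $\Delta$ into the braided tensor product together with coefficient comparison on $ek=qke$ etc.; this works and has the small advantage of extracting $\varphi(k,e)$ and $\varphi(e,k)$ from the two independent tensor legs of a single relation, but it is longer, and your starting point $\Delta(k^2)=k^2\otimes k^2$ is not free: multiplicativity only gives $\Delta(k^2)=\varphi(k,k)c^2\,k^2\otimes k^2$, so you must invoke the counit axiom (on $k$ and on $k^2$) to conclude $\varphi(k,k)=1$, and likewise for $\varphi(k^{-1},k^{-1})$; the relations $kk^{-1}=k^{-1}k=1$ then give $\varphi(k,k^{-1})=\varphi(k^{-1},k)=1$ cleanly. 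With that one justification supplied, both routes arrive at the same conclusion; the paper's is shorter because it exploits the arbitrariness of $\phi$ already at this stage, whereas yours only needs it (implicitly) in the final step where $e^{i\phi(\varphi(z,e)+\varphi(z,f))}=1$ must hold for all $\phi$.
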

	\begin{proof}
		Assume that the braided coproduct of $k$ is equal to $\Delta_{\phi}(k) = c \Delta(k) = c k \otimes k$ where $c \ne 0$ is some unknown scalar. By compatibility of the braiding with the coproduct \eqref{xi-prod} for $k$ and any homogeneous $h \in \cU_{q}(su(2))$
		\begin{equation*}
			\begin{aligned}
				&\left( (\id \otimes \Delta_{\phi})\circ \Psi \right)(k \otimes h) = c e^{ i \phi \varphi(k,h)} h \otimes k \otimes k, \\
				&\left( \Psi \otimes \id) \circ(\id \otimes \Psi)\circ(\Delta_{\phi} \otimes \id)\right)(k \otimes h) = c e^{2 i \phi \varphi(k,h)} h \otimes k \otimes k,
			\end{aligned}
		\end{equation*}
		where $\circ$ stands for the composition of maps. 
		So if phase $\phi$ is arbitrary, then  $\varphi(k,h) = 0$ for all $h \in \cU_{q,\phi}(\hat{u}(2))$.
		The last relation from (\ref{uq_pr}), by compatibility of braiding with the product gives:
		\begin{equation*}
			\begin{aligned}
				&\Psi( h \otimes (fe - ef) ) = e^{i \phi (\varphi(h,f) + \varphi(h, e))} (fe - ef) \otimes h, \\
				&\Psi\left( h \otimes (k^2 - k^{-2}) \right) =   (k^2 - k^{-2}) \otimes h,
			\end{aligned}
		\end{equation*}
		and  analogously
		\begin{equation*}
			\begin{aligned}
				&\Psi((fe - ef) \otimes h ) = e^{i \phi (\varphi(f,h) + \varphi(e, h))} h \otimes (fe - ef), \\
				&\Psi\left( (k^2 - k^{-2}) \otimes h \right) =  h \otimes  (k^2 - k^{-2}),
			\end{aligned}
		\end{equation*}
		for any $h \in \cU_{q, \phi}(\hat{u}(2))$. To prove the claim of the lemma it's enough to substitute  $h$ by $e$ and by $f$.
	\end{proof}
	
	Based on those results, we tried to create a braided version of bialgebra structure from $\cU_q(su(2))$ with the help of language FORM$^{\text{\textregistered}}$ by checking all possibilities for braiding factors. However, those attempts were unsuccessful due to a bigger number of conditions when compared to the number of free parameters. The only consistent solution exists for $\phi = 0$ or $\phi = \pi$ which is not satisfying.  However, to solve this problem,
	we propose to extend the algebra by requiring that the adjoint of $k$,
	$k^*$ is an independent generator. The generator $k^*$ was added with the condition that relations with $k^*$ differ from those with $k$ only by a scalar factor of modulus $1$. Such construction allows not only the bialgebra, but the entire braided Hopf algebra structure 
	to be introduced which was again, effectively checked with the help of FORM$^{\text{\textregistered}}$.\footnote{Our calculations can be found at \url{https://github.com/RafalBistron/balchelor/blob/master/uqsu2.frm}}
	
	\begin{defn}
		Let $0 \leq q \leq 1$ be a real number and $\phi$ arbitrary phase. We define $\cU_{q,\phi}(\hat{u}(2))$ over $\fC$ as a $\ast$-algebra generated by $e$, $f$ and invertible elements $k$, $k^*$, with the following product relations (we omit the relations obtained by conjugation),
		\begin{equation}
			\label{eq5}
			\begin{aligned}
				&e k = q e^{-4i\phi} ke, \qquad  && k f = q e^{-4i\phi} fk,  \\
				& (e f -  f e) = \frac{1}{q^{-1}-q} \left( k k^* - k^{-1} (k^*)^{-1} \right), \qquad 
				&& kk^* = k^*k.
			\end{aligned}
		\end{equation}
		Then, with the following coproduct, 
		\begin{equation}
			\label{eq6}
			\begin{aligned}
				& \cop(k) = k \otimes k, \qquad  && \cop(k^*) = k^* \otimes k^*,  \\
				& \cop(e) = e \otimes k + k^{-1} \otimes e, \qquad 
				&& \cop(f) =  f \otimes k^* + (k^{*})^{-1} \otimes f, 
			\end{aligned}
		\end{equation}
		and the braiding,
		\begin{equation}
			\label{eq7}
			\begin{aligned}
				&\Psi(e \otimes f) = e^{-8i\phi} f \otimes e, \qquad &&
				\Psi(f \otimes e)  e^{-8i\phi} e \otimes f, \\
				&\Psi(e \otimes e) = e^{8i\phi} e \otimes e, \qquad &&
				\Psi(f \otimes f)  = e^{8i\phi} f \otimes f . \\
			\end{aligned}
		\end{equation}
        which vanish on other pairs of generators and extends uniquely on the entire basis by compatibility with the product,
		as well as the counit and the antipode:
		\begin{equation}
			\begin{aligned}
				& \epsilon(e) = 0, && \epsilon(f) = 0, && \epsilon(k) = 1 && \epsilon(k^*) = 1 \\
				&S(e) = -q^{-1} e^{4i\phi} e, && S(f) = -q e^{4i\phi} f,
				&&S(k) = k^{-1}, \qquad && S(k^*) = (k^*)^{-1}, \\
			\end{aligned}
		\end{equation}
		it becomes a  braided $*$- Hopf algebra.
	\end{defn}
	
	\begin{rem}\label{rem1}
		Observe that for $\phi=0$  the algebra $\cU_{q, 0}(\hat{u}(2))$ is 
		an ordinary $\ast$-Hopf algebra and there exists a Hopf algebra homomorphism 
		$\chi: \cU_{q, 0}(\hat{u}(2)) \rightarrow \fC[z, z^{-1}]$,
		\begin{equation*}
			\begin{aligned}
				& \chi(e) = 0, \qquad \qquad  && \chi(f) = 0, \\
				& \chi(k) = z,  && \chi(k^*) = z^{-1}, \\
			\end{aligned}
		\end{equation*}
		which leads to following gradings $\mu,\nu$ on   $\cU_{q, 0}(\hat{u}(2))$:
		
		\begin{equation*}
			\begin{aligned}
				& \mu(e) = -1, \quad  &\mu(f) = 1, \quad  \quad & \mu(k) = 1, \quad & \mu(k^*) = -1, \\
				& \nu(e) = 1, \quad  &\nu(f) = -1, \quad  \quad & \nu(k) = 1, \quad & \nu(k^*) = -1, \\
				& \delta(e) = -2, \quad  &\delta(f) = 2, \quad  \quad & \delta(k) = 0, \quad & \delta(k^*) = 0. 
			\end{aligned}
		\end{equation*}
		So the algebra, $\cU_{q,0}(\hat{u}(2))$ can be deformed by twisting, similarly as $SU_q(2)$, and the resulting braided 
		Hopf $\ast$-algebra is $\cU_{q,\phi}(\hat{u}(2))$.
	\end{rem}
	
	Note that if one perform polar decomposition $k = |k|U$, the bialgebra structure of $\cU_{q,\phi}(\hat{u}(2))$ can be embedded into greater bialgebra $\cU_{q,\phi}(u(2))$.
	
	\begin{defn}
		Let $\cU_{q,\phi}(u(2))$ be braided Hopf algebra over $\fC$ generated by $e$, $f = e^*$, invertible $|k| = |k|^*$, and $U$ such that $U^* = U^{-1}$, subject to the following relations:
		\begin{equation}
			\begin{aligned}
				&e |k| = q |k|e, \qquad  && |k| f = q  f|k|,  \\
				&e U = e^{-4 i \phi} Ue, \qquad  && U f = e^{-4 i \phi}  f U,  \\
				& (e f -  f e) = \frac{1}{q^{-1}-q} \left( |k|^2 - |k|^{-2} \right), \qquad 
				&& |k| U = U |k|.
			\end{aligned}
		\end{equation}
		With the coproduct defined by 
		\begin{equation}
			\begin{aligned}
				& \cop(|k|) = |k| \otimes |k|, \qquad  && \cop(U) = U \otimes U,  \\
				& \cop(e) = e \otimes |k|U + |k|^{-1}U^* \otimes e, \qquad 
				&& \cop(f) =  f \otimes |k|U^* + |k|^{-1}U \otimes f, 
			\end{aligned}
		\end{equation}
		the braiding (assumed to vanish on other pairs of generators and to extend uniquely on the entire basis by compatibility with thproduct),
		\begin{equation}
			\begin{aligned}
				&\Psi(e \otimes f) = e^{-8i\phi} f \otimes e, \qquad &&
				\Psi(f \otimes e) = e^{-8i\phi} e \otimes f, \\
				&\Psi(e \otimes e) = e^{8i\phi} e \otimes e, \qquad &&
				\Psi(f \otimes f)  = e^{8i\phi} f \otimes f ,
			\end{aligned}
		\end{equation}
		the counit and the antipode:
		\begin{equation}
			\begin{aligned}
				&S(e) = -q^{-1} e^{4i\phi} e, && S(f) = -q e^{4i\phi} f, && S(|k|) = |k|^{-1},  && S(U) = U^{-1}. \\
				& \epsilon(e) = 0, && \epsilon(f) = 0, && \epsilon(|k|) = 1, && \epsilon(U) = 1. \\
			\end{aligned}
		\end{equation}
	\end{defn}
	
	Unlike in the case of $\cU_q(u(2)) = \cU_q(su(2)) \oplus U(1)$, the braided Hopf algebra $\cU_{q, \phi}(u(2))$ cannot be simply decomposed into parts corresponding to modifications of $su(2)$ enveloping algebras and $U(1)$. Even when $\phi = 0$, decomposition into $\cU_q(su(2)) \oplus U(1)$ is true only for the algebra structure.  The algebra  $\cU_{q,\phi}(\hat{u}(2))$ is in fact 
	a natural sub-bialgebra of $\cU_{q, \phi}(u(2))$. 
	
	However, when $\phi = 0$, Hopf algebra $\cU_q(su(2))$ can be extracted from both $\cU_{q, 0}(\hat{u}(2))$ and $\cU_{q, 0}(u(2))$. In the first case,
	it is the quotient by a Hopf ideal generated by $k-k^*$ whereas in the second case, it is a quotient by a star Hopf ideal generated by $U-1$. The latter case could be described as an exact sequence of Hopf algebras:
	\begin{equation*}
	\mathbb{C}[U,U^*] \rightarrow \cU_{q, 0}(u(2)) 
	\rightarrow \cU_q(su(2)). 
	\end{equation*}
	Details on the $\phi \to 0$ and $q \to 1$ limits for $\cU_{q,\phi}(\hat{u}(2))$ may be found the Appendix \ref{claslim}.
	
	Moreover, since we have demonstrated that there is no unambiguous way 
	to find a braiding counterpart of $\cU_q(su(2))$ with a scalar braiding, we interpret $\cU_{q,\phi}(\hat{u}(2))$ as such analogue and in the next section we find the exact pairing and the action on $SU_{q,\phi}(2)$. 
	
	We would like to end this subsection by mentioning that the above braiding method, which "complexifies" selfadjoint generators corresponding to 
	the Cartan subalgebra, can be generalized for any Hopf algebra that is a quantum group originating from Lie algebra. 
	
	\subsection{Relations between $\cU_{q,\phi}(\hat{u}(2))$ and $SU_{q, \phi}(2)$}
	
	Next, we shall focus on the relationship between $\cU_{q,\phi}(\hat{u}(2))$ and $SU_{q, \phi}(2)$ braided Hopf algebras.
	
	\begin{lem}
		The following map $\Xi$ gives the braiding between the two braided Hopf algebras that is compatible with the products and coproducts both in $SU_{q,\phi}(2)$ and $\cU_{q,\phi}(\hat{u}(2))$:
		\begin{equation}
			\label{eq11}
			\Xi(h \otimes a) = e^{2 i \phi \delta(h)\delta(a)} a \otimes h,
		\end{equation}
		where $h \in \cU_{q,\phi}(\hat{u}(2))$, $a \in SU_{q,\phi}(2)$ and $\delta$ are as defined before.  Explicit form of braiding on the pairs of generators is given by 
		\begin{equation}
			\label{eq12}
			\begin{aligned}
				&\Xi(e \otimes \gamma) = e^{8 i \phi} \gamma \otimes e, \qquad&& \Xi(e \otimes \gamma^*) = e^{- 8 i \phi} \gamma^* \otimes e, \\
				&\Xi(f \otimes \gamma) = e^{-8 i \phi} \gamma \otimes f, \qquad  && \Xi(f \otimes \gamma^*) = e^{8 i \phi} \gamma^* \otimes f, 
			\end{aligned}
		\end{equation}
	where we've written only the pairs with nontrivial (different than transposition) braiding.
	\end{lem}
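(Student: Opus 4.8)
The plan is to exploit that both $\cU_{q,\phi}(\hat u(2))$ and $SU_{q,\phi}(2)$ are obtained by the twisting procedure of Theorem \ref{Thm33} from graded Hopf algebras, so that every element carries a well-defined index $\delta$, and the proposed map is nothing but the universal twisting braiding $\Xi(h\ot a)=e^{2i\phi\,\delta(h)\delta(a)}\,a\ot h$ transported to the pair $(\cU_{q,\phi}(\hat u(2)),\,SU_{q,\phi}(2))$. The whole lemma then reduces to two structural properties of the grading: first, additivity of the index under multiplication, $\delta(xy)=\delta(x)+\delta(y)$, which is \eqref{hom1}; and second, homogeneity of the coproduct, namely that in every term of $\cop(x)$ one has $\delta(x_{(1)})+\delta(x_{(2)})=\delta(x)$. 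Granting these, each of the diagrammatic identities \eqref{xi-prod}--\eqref{xi-coprod}, together with the analogous identities for the product and coproduct in $SU_{q,\phi}(2)$, becomes an identity of scalar phase factors, which I verify by direct substitution on a homogeneous basis and extend by bilinearity.

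For the compatibility with the product, the first identity in \eqref{xi-prod}, I would evaluate both sides on $h\ot g\ot a$ with $h,g$ homogeneous. The left-hand side gives $e^{2i\phi\,\delta(hg)\delta(a)}\,a\ot hg$, while braiding $g$ past $a$ and then $h$ past $a$ on the right-hand side accumulates the product of phases $e^{2i\phi\,\delta(g)\delta(a)}e^{2i\phi\,\delta(h)\delta(a)}$ before remultiplying in $\cU_{q,\phi}(\hat u(2))$; the two agree precisely because $\delta(hg)=\delta(h)+\delta(g)$. The second identity in \eqref{xi-prod}, which concerns the product in $SU_{q,\phi}(2)$, follows in the same way from additivity of $\delta$ in the $A$-slot.

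For the compatibility with the coproduct, the identity \eqref{xi-coprod}, I would again evaluate on a homogeneous $h\ot a$. Braiding each leg $h_{(1)}$ and $h_{(2)}$ of $\cop(h)$ past $a$ produces the combined phase $e^{2i\phi(\delta(h_{(1)})+\delta(h_{(2)}))\delta(a)}$, which by homogeneity of the coproduct collapses to $e^{2i\phi\,\delta(h)\delta(a)}$; this is exactly the single phase obtained on the other side, where one first braids $h$ past $a$ and then applies $\cop$ in $\cU_{q,\phi}(\hat u(2))$. The analogous condition for the coproduct in $SU_{q,\phi}(2)$ is identical with the roles of the two slots exchanged. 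It is worth stressing that the twisted coproduct $\cop_\phi$ carries its own phase factor coming from Theorem \ref{Thm33}, but this factor appears identically on both sides of each identity and so does not interfere.

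The main obstacle is therefore not the phase bookkeeping but justifying the homogeneity of the coproduct used above. This I would derive from the coproduct-equivariance of $\Delta_L,\Delta_R$ (the Lemma of \cite{arek} preceding Theorem \ref{Thm33}) together with the fact that $\chi$ is a Hopf algebra homomorphism: compatibility of $\chi$ with the coproduct forces each term of $\cop(x)$ to respect the $z$-grading, whence $\mu,\nu$, and thus $\delta$, are additive across the coproduct. Finally, the explicit table \eqref{eq12} is obtained by inserting the index values $\delta(e)=-2$, $\delta(f)=2$, $\delta(\gamma)=-2$, $\delta(\gamma^*)=2$ and $\delta(k)=\delta(k^*)=\delta(\alpha)=\delta(\alpha^*)=0$ into $\Xi(h\ot a)=e^{2i\phi\,\delta(h)\delta(a)}\,a\ot h$, all remaining pairs of generators giving the trivial braiding since at least one of the two indices vanishes.
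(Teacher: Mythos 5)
Your proposal is correct and follows exactly the route the paper indicates (and then omits): the compatibility identities reduce to phase bookkeeping via the additivity of $\delta$ under multiplication and its additivity across the coproduct, and the table \eqref{eq12} follows from the index values $\delta(e)=-2$, $\delta(f)=2$, $\delta(\gamma)=-2$, $\delta(\gamma^*)=2$ with all other generators of index zero. Your additional care in checking that the intrinsic phases of the twisted product and coproduct cancel on both sides, and in deriving the coproduct-homogeneity of $\delta$ from the coassociativity argument with $\chi$, supplies precisely the "straightforward calculations" the paper leaves out.
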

	
	The compatibility with the coproduct and product \eqref{xi-prod} in $\cU_{q,\phi}(\hat{u}(2))$  follows directly  from the properties of $\delta$. Similarly, it can be
	explicitly verified that the compatibility with the product and coproduct \eqref{xi-prod} in $SU_{q,\phi}(2)$ holds. We omit the straightforward calculations.
	
	\begin{thm}
		The following braided pairing between $\cU_{q,\phi}(\hat{u}(2))$ and $SU_{q,\phi}(2)$, which is defined below on the set of generators (with all other parings vanishing)
		satisfies all properties stated in  Theorem \ref{thh_br_par}, Eqs. \eqref{BrPair2}.
		\begin{equation}
			\label{eq8}
			\begin{aligned}
				& \langle e, \gamma^* \rangle  = -q^{-1} e^{- 4 i \phi}, \qquad && \langle f, \gamma\rangle  = 1, \\
				& \langle k,\alpha\rangle  = q^{-1/2} e ^{2 i \phi}, && \langle k,\alpha^*\rangle  = q^{1/2} e ^{-2 i \phi}, \\
				& \langle k^*,\alpha\rangle  = q^{-1/2} e ^{-2 i \phi}, && \langle k^*,\alpha^*\rangle  = q^{1/2} e ^{2 i \phi}, \\
				& \langle k^{-1},\alpha\rangle  = q^{1/2} e ^{-2 i \phi},&& \langle k^{-1},\alpha^*\rangle  = q^{-1/2} e ^{2 i \phi}, \\
				& \langle k^{*-1},\alpha\rangle  = q^{1/2} e ^{2 i \phi}, && \langle k^{*-1},\alpha^*\rangle  = q^{-1/2} e ^{-2 i \phi}. \\
			\end{aligned}
		\end{equation}
	\end{thm}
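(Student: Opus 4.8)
The plan is to verify the three defining identities of Definition~\ref{defbrpa} (Eqs.~\eqref{BrPairing}) together with the two additional identities~\eqref{BrPair2}, in each case reducing to a check on generators. All five conditions are multilinear and are expressed solely through the products, coproducts, antipodes and braidings of the two algebras; since $\cU_{q,\phi}(\hat{u}(2))$ and $SU_{q,\phi}(2)$ are generated by the listed elements and the identities propagate through products and coproducts, it suffices to establish them on generator inputs. The structural fact I would record first is that the pairing~\eqref{eq8} is \emph{index-homogeneous}: a glance at the table shows $\langle h,a\rangle\neq 0$ forces $\delta(h)+\delta(a)=0$ (indeed $\delta(e)=-2=-\delta(\gamma^*)$, $\delta(f)=2=-\delta(\gamma)$, and $\delta(k)=\delta(\alpha)=0$). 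This observation carries most of the structural burden.

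I would dispose of~\eqref{BrPair2} first, since both identities are immediate consequences of index-homogeneity. Here $\Psi$ and $\Xi$ are the scalar flips of Theorem~\ref{Thm33}, with exponents of the form $2i\phi\,\delta(\cdot)\delta(\cdot)$ (for $\Xi$ this is exactly~\eqref{eq11}). Tracing the left-hand side of the first identity in~\eqref{BrPair2} on $h\otimes a\otimes b$, the two braidings produce the phase $e^{2i\phi\,\delta(b)(\delta(a)+\delta(h))}$ before $\langle h,a\rangle$ is applied; on the support of the pairing $\delta(a)+\delta(h)=0$, so the phase is trivial and the expression collapses to $\langle h,a\rangle\,b$, matching the right-hand side. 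The second identity is the same after relabelling, the relevant exponent being $e^{2i\phi\,\delta(h)(\delta(g)+\delta(a))}$, again killed on the support of $\langle g,a\rangle$.

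For~\eqref{BrPairing} the real issue is consistency. Reading the first two diagrams as the rules that extend~\eqref{eq8} to the whole tensor product via the coproducts and $\Xi$ --- concretely $\langle h,ab\rangle=e^{2i\phi\,\delta(h_{(2)})\delta(a)}\langle h_{(1)},a\rangle\langle h_{(2)},b\rangle$ and $\langle hg,a\rangle=e^{2i\phi\,\delta(g)\delta(a_{(1)})}\langle h,a_{(1)}\rangle\langle g,a_{(2)}\rangle$ --- I must check that this extension annihilates the defining relations of both algebras: pairing any relation of~\eqref{eq5} against a spanning set of $SU_{q,\phi}(2)$ gives zero, and symmetrically for the commutation relations of $SU_{q,\phi}(2)$. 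A representative check is $\langle ek,\gamma^*\rangle=-q^{-1/2}e^{-6i\phi}$ and $\langle ke,\gamma^*\rangle=-q^{-3/2}e^{-2i\phi}$, which satisfy $\langle ek,\gamma^*\rangle=q\,e^{-4i\phi}\langle ke,\gamma^*\rangle$ as demanded by the first relation in~\eqref{eq5}; here the relation's phase $e^{-4i\phi}$ is reconstructed from the $\Xi$-factor $e^{2i\phi\delta(e)\delta(\gamma^*)}$ in the coproduct expansion~\eqref{eq6} together with the explicit phases of~\eqref{eq8}. The antipode identity (third line of~\eqref{BrPairing}) I would verify directly on generators, e.g. $\langle S(e),\gamma^*\rangle=-q^{-1}e^{4i\phi}\langle e,\gamma^*\rangle=q^{-2}=\langle e,S(\gamma^*)\rangle$, the other pairs being analogous.

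The main obstacle is precisely this well-definedness against the quadratic relations. One must show that the phases $e^{\pm 4i\phi}$ decorating the commutation relations of both algebras emerge correctly from the twisted coproducts interacting with $\Xi$, and separately confirm the normalization of the four grouplike pairings $\langle k^{\pm 1},\cdot\rangle$, $\langle k^{*\pm 1},\cdot\rangle$ (for instance $\langle kk^{-1},\alpha\rangle=\langle k,\alpha\rangle\langle k^{-1},\alpha\rangle=1=\epsilon(\alpha)$, the $\gamma^*\!\otimes\!\gamma$ term of $\Delta(\alpha)$ dropping out). Since a naive twist of the undeformed pairing by a single degree-bilinear phase does \emph{not} reproduce~\eqref{eq8} --- the $e,f$ entries and the $k$-type entries rescale by genuinely different rules --- there is no shortcut avoiding the term-by-term phase bookkeeping; organizing the computation by homogeneous degree keeps the finite list of generator checks manageable, and is exactly the content that the symbolic computation certifies.
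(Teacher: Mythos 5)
Your proposal is correct and follows essentially the same route as the paper, which omits the proof entirely and states only that it is a ``straightforward but tedious computation \dots based on explicit verifications of the pairings with the products and the coproducts as required in \eqref{BrPairing}'' --- precisely the generator-by-generator verification you outline. Your structural observation that index-homogeneity of the pairing (nonvanishing forces $\delta(h)+\delta(a)=0$) renders both identities of \eqref{BrPair2} automatic, and your sample checks (e.g.\ $\langle ek,\gamma^*\rangle=-q^{-1/2}e^{-6i\phi}=q\,e^{-4i\phi}\langle ke,\gamma^*\rangle$ and $\langle kk^{-1},\alpha\rangle=1=\epsilon(\alpha)$), are consistent with the stated structure constants and correctly identify well-definedness against the quadratic relations as the only substantive burden.
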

	We again omit the proof as it is a straightforward but tedious computation 
	and is based on explicit verifications of the pairings with the products 
	and the coproducts as required in (\ref{BrPairing}). 
	
	Using the above defined pairing by the Theorem \ref{thh_br_par} one can construct braided left action of
	$\cU_{q,\phi}(\hat{u}(2))$ on $SU_{q,\phi}(2)$. 
	
	\begin{thm}
		There exists a braided left action of the braided Hopf algebra $\cU_{q,\phi}(\hat{u}(2))$ on $SU_{q,\phi}(2)$,
		with the mutual braiding $\Xi$ \eqref{eq12}, defined through:
		\begin{equation}
			\label{eq13}
			\begin{aligned}
				h \la x = (\id \otimes <. , .>)(\Xi \otimes \id)(\id \otimes \Delta)(h \otimes x)~.
			\end{aligned}
		\end{equation}
		Furthermore there exists a right braided action, which is obtained with the inverse braiding i.e.
		\begin{equation}
			\label{eq16}
			\Xi^{-1}(a \otimes h) = e^{2 i \phi \delta(h)\delta(a)} h \otimes a
		\end{equation}
		and pairing redefined as a function $ SU_{q,\phi}(2) \otimes \cU_{q,\phi}(\hat{u}(2)) \rightarrow \mathbb{C}$. 
		Then, similarly, we have
		\begin{equation}
			\label{eq16_full}
			x \ra h = (<. , .> \otimes \id)(\id \otimes \Xi^{-1})(\Delta \otimes \id)(x \otimes h)~.
		\end{equation}
	\end{thm}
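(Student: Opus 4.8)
The plan is to read off the left action as a direct instance of Theorem 2.3 and to obtain the right action by reflecting that argument. First observe that \eqref{eq13} is nothing but the construction \eqref{constr}: expanding the right-hand diagram of \eqref{constr} as a composite of maps yields exactly $h \la x = (\id \otimes \langle \cdot,\cdot\rangle)(\Xi \otimes \id)(\id \otimes \Delta)(h \otimes x)$. Consequently, the only thing to establish for the left action is that the data $(\Xi, \langle\cdot,\cdot\rangle)$ satisfy the hypotheses of Theorem 2.3; once that is granted, Theorem 2.3 produces a braided left action in the sense of Definition \ref{Ract}, with $\Xi$ as the mutual braiding, and no further computation is needed.

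Thus the first step is to collect the required inputs. The braiding $\Xi$ of \eqref{eq12} is compatible with the products and coproducts of both $\cU_{q,\phi}(\hat{u}(2))$ and $SU_{q,\phi}(2)$, i.e.\ it satisfies \eqref{xi-prod} and \eqref{xi-coprod} together with their $A$-analogues; this is where the homogeneity degrees $\delta$ and the explicit phase form \eqref{eq11} of $\Xi$ do the work. The pairing \eqref{eq8} satisfies the defining relations \eqref{BrPairing} of a braided pairing as well as the two additional relations \eqref{BrPair2}. Granting these generatorwise verifications, the hypotheses of Theorem 2.3 hold verbatim and the left action exists.

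For the right action I would run the reflected version of the same construction. I would first confirm that $\Xi^{-1}$ of \eqref{eq16} is a genuine two-sided inverse of $\Xi$: since $\Xi$ is the flip rescaled by the scalar $e^{2i\phi\delta(h)\delta(a)}$, its inverse is the opposite flip rescaled by the reciprocal scalar $e^{-2i\phi\delta(h)\delta(a)}$, so that the two phases cancel in $\Xi^{-1}\Xi = \id$ and $\Xi\Xi^{-1} = \id$. Because the braiding is a scalar multiple of the flip, the mirror images of \eqref{xi-prod} and \eqref{xi-coprod} for $\Xi^{-1}$, and the right-handed analogues of \eqref{BrPairing} and \eqref{BrPair2}, follow from their left-handed counterparts by the same reflection. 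Reading \eqref{eq16_full} as the mirror of \eqref{constr} and repeating the diagrammatic proof of Theorem 2.3 in reflected form then yields a right braided action $x \ra h$ of $\cU_{q,\phi}(\hat{u}(2))$ on $SU_{q,\phi}(2)$.

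The essential obstacle is not the formal deduction but the generatorwise check that the pairing \eqref{eq8} satisfies \eqref{BrPairing} and \eqref{BrPair2}; this is precisely where the various phase factors $e^{\pm 2i\phi}$, $e^{\pm 4i\phi}$ and $e^{\pm 8i\phi}$ occurring in the coproducts \eqref{eq6}, the antipode and the braidings \eqref{eq7}, \eqref{eq12} must cancel consistently. I would therefore concentrate on verifying \eqref{BrPair2} on the four nontrivial pairings in \eqref{eq8}, using the additivity of the index in \eqref{hom1}, and only indicate the purely bookkeeping reflection needed to pass from the left action to the right one.
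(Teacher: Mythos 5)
Your proposal is correct and follows essentially the same route as the paper: the left action is obtained by invoking Theorem 2.3 with the braiding of \eqref{eq12} and the pairing of \eqref{eq8} (whose verification of \eqref{BrPairing} and \eqref{BrPair2} the paper likewise defers as a generatorwise computation), and the right action is the reflected construction with the reversed pairing and inverse braiding. One small bookkeeping remark: the nontrivial pairings in \eqref{eq8} number ten, not four (you may have conflated them with the four nontrivial braidings in \eqref{eq12}), and your reciprocal phase $e^{-2i\phi\delta(h)\delta(a)}$ for $\Xi^{-1}$ is the literal two-sided inverse of \eqref{eq11}, whereas \eqref{eq16} as printed carries the opposite sign.
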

	The explicit formulae for the action on the generators are given in the Appendix 
	\ref{appact}.  To construct a right action of $ \cU_{q,\phi}(\hat{u}(2))$ on  $ SU_{q,\phi}(2) $ we were 
	forced to change the order of arguments in pairing and reverse the braiding. It is so because in the braided category each transposition should be replaced by 
	braiding resulting in modified expression often with many new or deformed terms. This situation makes the order of arguments extremely important.
	
	\section{A family of spheres}
	
	In this section we shall discuss quantum spheres as originating from the finite dimensional representation  of $\cU_{q,\phi}(\hat{u}(2))$,
	with a braided module structure in terms of definition \ref{Ract}. Throughout section we use the standard notation of $q$ calculus,
	\begin{equation*}
		\begin{aligned}
			& [m]_q = \frac{q^{m} - q^{-m}}{q - q^{-1}}, &~~ m \in \fZ, \\
			& [m]_q ! = [1]_q[2]_q\cdots[m]_q, &~~  m \in \fN. \\
		\end{aligned}
	\end{equation*}
	
	\subsection{Finite dimensional representations of $\cU_{q,\phi}(\hat{u}(2))$.}
	
	\begin{prop}\label{prop1}
		Let $l \in \frac{1}{2}\fN$ be a half integer, $\psi \in [0, 2 \pi)$, an arbitrary  phase, $V_l$ a $2l+1$-dimensional vector space 
		over $\fC$ with basis $\be_m$, $m = -l, -l + 1, \cdots, l$ and $T_{l, \psi}$ be a map $\cU_{q,\phi}(\hat{u}(2)) \rightarrow GL(V_l)$ 
		defined on generators by:
		\begin{equation}
			\label{rep0}
			\begin{aligned}
				&T_{l, \psi}(k) \be_m = q^m e^{-4 i m\phi - i\psi} \be_m, \qquad
				& T_{l, \psi}(f) \be_m = \pm \sqrt{[l-m]_q[l + m + 1]_q}\be_{m+1},  \\
				& T_{l, \psi}(k^*) \be_m =  q^m e^{4 i m\phi + i\psi} \be_m, 
				& T_{l, \psi}(e) \be_m=  \pm \sqrt{[l+m]_q[l - m + 1]_q}\be_{m-1}, \\
			\end{aligned}
		\end{equation}
		where the signs in $T_{l. \psi}(f)$ and $T_{l, \psi}(e)$ actions are the same.
		Then $T_{l, \psi}$ is irreducible finite dimensional representation of the 
		algebra $\cU_{q,\phi}(\hat{u}(2))$.
	\end{prop}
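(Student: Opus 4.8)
The plan is to verify two things: first, that the formulas \eqref{rep0} genuinely define an algebra homomorphism $T_{l,\psi}\colon\cU_{q,\phi}(\hat u(2))\to GL(V_l)$, i.e. that the defining relations \eqref{eq5} are preserved; and second, that the resulting representation is irreducible.

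For the homomorphism property I would proceed relation by relation. The commutation relations $ek=qe^{-4i\phi}ke$ and $kf=qe^{-4i\phi}fk$ are checked directly on the basis vector $\be_m$: each side scales $\be_m$ (or shifts it by one and scales), so the check reduces to comparing the scalar $q^{m}e^{-4im\phi-i\psi}$ coming from $k$ against the shift produced by $e$ or $f$, and confirming the prefactor $qe^{-4i\phi}$ emerges correctly. The only genuinely substantive relation is the commutator
\begin{equation*}
(ef-fe)\,\be_m=\frac{1}{q^{-1}-q}\bigl(kk^*-k^{-1}(k^*)^{-1}\bigr)\be_m .
\end{equation*}
Here I note that $kk^*$ acts on $\be_m$ by $q^{2m}$ (the $\phi$- and $\psi$-dependent phases cancel since $k$ contributes $e^{-4im\phi-i\psi}$ and $k^*$ contributes $e^{+4im\phi+i\psi}$), so the right-hand side acts by $\tfrac{q^{2m}-q^{-2m}}{q^{-1}-q}=-[2m]_q$. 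On the left, using \eqref{rep0},
\begin{equation*}
(ef-fe)\be_m=\bigl([l-m]_q[l+m+1]_q-[l+m]_q[l-m+1]_q\bigr)\be_m,
\end{equation*}
and the standard $q$-number identity $[l-m]_q[l+m+1]_q-[l+m]_q[l-m+1]_q=-[2m]_q$ closes the check. The sign choice in $T_{l,\psi}(e)$ and $T_{l,\psi}(f)$ being equal guarantees the two cross terms $ef$ and $fe$ carry the same sign, so the $\pm$ does not spoil the identity. The relation $kk^*=k^*k$ is immediate since both act diagonally.

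For irreducibility I would use the weight-space argument. The operators $k,k^*$ are diagonal with eigenvalue $q^m e^{\mp(4im\phi\pm i\psi)}$ on $\be_m$; as $m$ ranges over $-l,\dots,l$ these eigenvalues are pairwise distinct (the moduli $q^m$ already separate them since $0\le q<1$, or for $q=1$ one invokes the phases together with the action), so each $\be_m$ spans a distinct joint eigenspace. Any nonzero invariant subspace $W$ must therefore contain some basis vector $\be_{m_0}$. Since $e$ maps $\be_m\mapsto\be_{m-1}$ and $f$ maps $\be_m\mapsto\be_{m+1}$ with \emph{nonzero} coefficients for $-l<m\le l$ and $-l\le m<l$ respectively (the radicands $[l\pm m]_q[l\mp m+1]_q$ vanish only at the extreme weights, precisely where the shift would leave the representation), repeated application of $e$ and $f$ reaches every $\be_m$ from $\be_{m_0}$. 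Hence $W=V_l$.

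The main obstacle, such as it is, lies in the $q$-number identity and in justifying eigenvalue separation uniformly in the parameters. The identity $[l-m]_q[l+m+1]_q-[l+m]_q[l-m+1]_q=-[2m]_q$ is routine but must be stated carefully so that the $\psi$- and $\phi$-phases truly cancel in $kk^*$ and in $k^{-1}(k^*)^{-1}$; this cancellation is exactly what makes the commutator relation $\phi$- and $\psi$-independent and hence consistent with \eqref{eq5}. The distinctness of the $k$-eigenvalues is the only place where one should pause: for $0\le q<1$ the factors $q^m$ already do the job, and the phases only reinforce it, so I expect no difficulty, but I would flag it rather than leave it implicit.
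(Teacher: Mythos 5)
Your proposal is correct and is exactly the ``straightforward'' verification the paper alludes to when it skips the proof: a generator-by-generator check of the relations \eqref{eq5} (with the commutator reducing to the $q$-number identity and the $\phi$-, $\psi$-phases cancelling in $kk^*$), followed by the standard weight-space argument for irreducibility, which is also the technique the paper itself uses in the classification theorem that follows. The one caveat you rightly flag — distinctness of the $k$-eigenvalues — is harmless here since the relation $(ef-fe)=\tfrac{1}{q^{-1}-q}(kk^*-k^{-1}(k^*)^{-1})$ already forces $q\neq 1$, so the moduli $q^m$ separate the weights.
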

	The proof is straightforward and we skip it. Note that for $\phi=0$ and $\psi=0$ the representation reduces to  the usual
	representation of $\cU_{q}(su(2))$. However, the additional phase $\psi$, which a priori can be included in the definition of
	the generator $k$ matters, because the map $k \to e^{i\psi} k$ is only an algebra morphism and does not preserve the coalgebra
	structure. To prove the general form of irreducible representations of $U_{q, \phi}(\hat{u}(2))$ one can use similar arguments as in the
	non-braided case of $U_{q}(su(2))$ \cite{Hopf1}.
	
	\begin{thm}
		Assume that $qe^{-4 i \phi}$ is not a root of unity \textsc{(}i.e $(q e^{-4 i \phi})^r \ne 1$ for any $r \in \fN$\textsc{)}, then any finite dimensional irreducible representation $T$ of ~ $\cU_{q,\phi}(\hat{u}(2))$ algebra is equivalent to some of representations $T_{l, \psi}$ defined in (\ref{prop1})
	\end{thm}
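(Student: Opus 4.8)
The plan is to transplant the highest-weight classification of $\cU_q(su(2))$ to this setting, the key observation being that the commutation relations \eqref{eq5} collapse onto a copy of $\cU_q(sl_2)$ once one forms the product $K:=kk^*$. A direct manipulation of \eqref{eq5} (together with the conjugate relations $k^{*}f=qe^{4i\phi}fk^{*}$ and $ek^{*}=qe^{4i\phi}k^{*}e$) yields
\[
KeK^{-1}=q^{-2}e,\qquad KfK^{-1}=q^{2}f,\qquad ef-fe=\frac{K-K^{-1}}{q^{-1}-q},
\]
so $e,f,K$ generate a standard $\cU_q(sl_2)$, while the finer data of $k$ and $k^{*}$ separately will reappear only as the extra phase $\psi$. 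Since $0<q<1$ is never a root of unity the $\cU_q(sl_2)$ part is automatically generic; the hypothesis that $qe^{-4i\phi}$ is not a root of unity will be used only to control the separate action of $k$.

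First I would produce a lowest-weight vector. As $k$ and $k^{*}$ commute and $\fC$ is algebraically closed, $T$ has a common eigenvector $w$, with $T(k)w=\lambda w$ and $T(k^{*})w=\mu w$. The relations $ek=qe^{-4i\phi}ke$ and $ek^{*}=qe^{4i\phi}k^{*}e$ show that $e$ carries a common $(k,k^{*})$-eigenvector to another one, multiplying the $k$-eigenvalue by $(qe^{-4i\phi})^{-1}$ and the $k^{*}$-eigenvalue by $(qe^{4i\phi})^{-1}$. Because $qe^{-4i\phi}$ is not a root of unity the iterated $k$-eigenvalues $(qe^{-4i\phi})^{-j}\lambda$ are pairwise distinct, so the nonzero vectors among $w,ew,e^{2}w,\dots$ are linearly independent; finite dimensionality then forces $e^{\,n}w=0$ for some minimal $n$, and $v_0:=e^{\,n-1}w$ is a common eigenvector of $k,k^{*}$ annihilated by $e$.

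Next I would climb the $\cU_q(sl_2)$ ladder. Put $v_j:=f^{\,j}v_0$; each $v_j$ is again a $(k,k^{*})$-eigenvector, and the span of the $v_j$ is stable under $e$ (by the recursion below), under $f$, and under $k,k^{*}$, hence equals $V$ by irreducibility. Writing $\Lambda$ for the $K$-eigenvalue of $v_0$ one has $Kv_j=q^{2j}\Lambda v_j$, and the commutator relation gives $ev_j=c_jv_{j-1}$ with $c_0=0$ and $c_{j+1}=c_j+\frac{q^{2j}\Lambda-q^{-2j}\Lambda^{-1}}{q^{-1}-q}$. The ladder terminates at the first $N$ with $v_{N+1}=0$, and the consistency condition $c_{N+1}=0$, in which $c_{N+1}=\frac{[N+1]_q}{q^{-1}-q}\bigl(q^{N}\Lambda-q^{-N}\Lambda^{-1}\bigr)$ with $[N+1]_q\ne0$, forces $\Lambda^{2}=q^{-2N}$; thus $\dim V=N+1=:2l+1$ with $l\in\frac{1}{2}\fN$ and, selecting the positive root, $\Lambda=q^{-2l}$. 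Tracking the eigenvalues along the ladder gives $T(k)v_j=q^{-l+j}e^{-4i(-l+j)\phi}e^{-i\psi}v_j$ for a single phase $\psi$ read off from $\arg\lambda$, and after rescaling the $v_j$ the operators $e,f$ acquire the standard square-root coefficients of \eqref{rep0}; this reproduces $T_{l,\psi}$, the residual global sign being removed by conjugating with $\mathrm{diag}((-1)^{j})$, so that it produces an equivalence and not a new class.

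I expect the crux to be the existence of the lowest-weight vector, i.e.\ proving that an irreducible $T$ is a single ladder rather than a cyclic module with no annihilated vector: this is exactly the point at which the non-root-of-unity condition on $qe^{-4i\phi}$ is indispensable, for it guarantees that the $k$-weights along an $e$-string never coincide and therefore cannot close into a loop. A second, more delicate point is fixing the modulus and the sign of the extreme weight: the relations alone give only $\Lambda^{2}=q^{-2N}$ and leave $|\lambda|$ free, and it is the $\ast$-structure (identifying $k^{*}$ with the adjoint of $k$, whence $\mu=\overline{\lambda}$ and $\Lambda=|\lambda|^{2}=q^{-2l}>0$) that discards the spurious root and pins $|\lambda|=q^{-l}$, leaving precisely the discrete label $l$ and the single continuous phase $\psi$ of the family $T_{l,\psi}$.
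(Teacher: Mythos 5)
Your proof is correct and follows essentially the same route as the paper's: produce an extremal common eigenvector of $k,k^*$ using the non-root-of-unity hypothesis to force the weight string to terminate, climb the ladder, extract the constraint $|\lambda|^2=q^{p}$ from the commutator, and change basis to land on $T_{l,\psi}$. Your repackaging via $K=kk^*$ generating a standard $\cU_q(sl_2)$, the reversed direction of the ladder, and your explicit flagging of where the $\ast$-structure pins down $|\lambda|$ and the sign of $\Lambda$ are only cosmetic refinements of the argument the paper gives.
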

	
	\begin{proof}
		As the underlying vector space is finite-dimensional and $k$ is normal, the operator $T(k)$ has at least one eigenvector $\be$ and some corresponding eigenvalue $\mu$. Because
		\begin{equation*}
			T(k)T(f^r)\be = q^r e^{-4r i \phi} T(f^r)T(k) \be = q^r e^{-4r i \phi} \mu T(f^r) \be ,
		\end{equation*} 
		for any $r \in \fN$, so $T(f^r)\be$ is either $0$ or an eigenvector of $T(k)$ to the eigenvalue $q^r e^{-4r i \phi} \mu$.
		
		Because $qe^{-4 i \phi}$ is not the root of unity, $q^r e^{-4r i \phi} \mu \ne \mu$ therefore $\be$ and $T(f^r)\be$ are different eigenvectors for different $r$. Moreover since $V$ is finite dimensional, there exists $r$ such that $T(f^r)\be \ne 0$ and $T(f^{r+1}) \be = 0$.
		
		Given this $r$ let us define basis vector $\be_0 = T(f^r)\be$ with corresponding eigenvalue $\lambda$: $T(k)\be_0 = \lambda \be_0$. Because $k$ and $k^*$ are conjugate, their representations are Hermitian adjoint, so $\be_0$ is also eigenvector of $T(k^*)$ to eigenvalue $\bar{\lambda}$.
		
		By defining $e^{(s)} = e^s/[s]_q!$,  $~~\be_s = T(e^{(s)})\be_0$  one gets:
		\begin{equation*}
			T(k)\be_s = T(k)T(e^{(s)})\be_0 = \lambda q^{-s}e^{4s i \phi }T(e^{(s)})\be_0 = \lambda q^{-s}e^{4s i \phi }\be_s.
		\end{equation*}
		Once again, because $V$ is finite dimensional, there exist the smallest $p \in \fN$ such that $T(e^{(p)}) \be_0 \ne 0$ and $T(e^{(p + 1)}) \be_0 = 0$. For any $r \in \{1, \cdots, p\}$:
		
		\begin{equation*}
			\begin{aligned}
				& T(f)\be_r = T(f)T(e^{(r)}) \be_0  = T\left(e^{(r-1)}\frac{q^{-(r-1)} kk^* - q^{r-1}k^{-1}k^{*-1} }{q - q^{-1}} + e^{(r)}f \right) \be_0 = \\
				& =  T(e^{(r-1)})\frac{q^{-(r-1)} T(k)T(k^*) - q^{r-1}T(k^{-1})T(k^{*-1}) }{q - q^{-1}} \be_0 + T(e^{(r)})T(f) \be_0 = \\
				& = \frac{q^{-r + 1}|\lambda|^2  - q^{r-1} |\lambda|^{-2}}{q - q^{-1}} \be_{r-1}. \\
			\end{aligned}
		\end{equation*}
		Thus, the subspace $V'$ spanned by the vectors $\be_0, \be_1, \cdots , \be_p$ is invariant under the operators $T(e)$, $T(f)$, $T(k)$, $T(k^*)$. Since the representation is irreducible, $V' = V$ and vectors $\be_0, \be_1, \cdots, \be_p$ form a basis of $V$.
		
		The next step is to find all  possible values of $\lambda$. Since $T(e^{(p + 1)}) \be_0 = 0$ and $T(f)\be_0 = 0$ we have:
		
		\begin{equation*}
			\left( T(e^{(p + 1)})T(f)  - T(f)T(e^{(p + 1)}) \right)\be_0 = \frac{q^{-p}|\lambda|^2 - q^p|\lambda|^{-2}}{q - q^{-1}}\be_p = 0.
		\end{equation*}
		Hence $|\lambda|^2 = q^p$ so $\lambda = q^{p/2}e^{ - i \psi}$ where $\psi$ is independent phase. Therefore the representation $T$ is given by:
		
		\begin{equation}
			\label{rep1}
			\begin{aligned}
				& T(f) \be_{r} = [p- r +1]_q \be_{r-1},  &&~~&& T(e)\be_{r} = [r + 1]_q \be_{r+1}, \\
				& T(k) \be_{r} = q^{p/2 - r}e^{4 r i \phi - i \psi} \be_{r}, &&~~&&  T(k^*) \be_{r} = q^{p/2 - r}e^{-4 r i \phi + i \psi} \be_{r}.\\
			\end{aligned}
		\end{equation}
		If one sets $l = \frac{p}{2}$ and $\be_r' = (\pm 1)^{-r} ([l - r]! /[l + r + 1]!)^{1/2} ~\be_{l-r}$ for $r = -l, \cdots, l$ and $\psi' = \psi - 4 l \phi$ then, the formulas (\ref{rep1}) might be rewritten as:
		
		\begin{equation}
			\label{rep2}
			\begin{aligned}
				& T(f) \be_{s}' = \pm\sqrt{[l-s]_q[l + s +1]_q} \be_{s+ 1}',  &&&& T(e) \be_{s}' = \pm\sqrt{[l+s]_q[l - s +1]_q} \be_{s - 1}', \\
				& T(k) \be_s' = q^{s} e^{-4 i s \phi - i\psi' } \be_s', &&&& T(k^*) \be_s' = q^{s} e^{4 i s \phi + i\psi' } \be_s'. \\
			\end{aligned}
		\end{equation}
		This ends the proof.
	\end{proof}
	
	\subsection{The quantum sphere algebras}
	
	Knowing that every irreducible representation of $\cU_{q,\phi}(\hat{u}(2))$ is equivalent to the one described in (\ref{prop1}), in order to 
	find the most general description of quantum spheres with 3 generators, we shall look for  $\cU_{q,\phi}(\hat{u}(2))$ braided module
	algebra, which is generated by fundamental representation ($l=1$) of the\; $\cU_{q,\phi}(\hat{u}(2))$ braided Hopf algebra. This means
	that the algebra is generated by $\be_{-1}, \be_0, \be_1 \in V_1$ and the action of the $\cU_{q,\phi}(\hat{u}(2))$ algebra on these elements
	is given exactly by \eqref{rep2}.
	
	The goal now is to find commutation relations for basis vectors of $V_1$, and a braided action $\Xi$ between $\cU_{q,\phi}(\hat{u}(2))$ 
	and $V_l$, such  that $\cU_{q,\phi}(\hat{u}(2))$ acts covariantly on the algebra generated by those vectors. We assume that the braided 
	action $\Xi$ depends on the phase factor, similarly as in the case of $SU_{q,\phi}(2)$, through an additive index with the generators
	being homogeneous elements: $\delta(\be_k) = lk$, where $l$ is some unknown constant. A further assumption is that the 
	commutation relations can be expressed by polynomials of degrees at most two in basis vectors $\be_i$. This assumption is 
	imposed so as to make new algebras $S_{q,\phi}^2$ a suitable braided generalization of the known structures such as Podle\'s spheres 
	\cite{podles} \cite{podles0}.
	
	To simplify the notation, we write $T_1(h)\be_i = h \la \be_i$ for any $h \in \cU_{q,\phi}(\hat{u}(2))$ 
	and $\be_i \in S_{q,\phi}^2$. Then the explicit form of the representation on generators is given by:
	
	\begin{equation}
		\label{sRep02}
		\begin{aligned}
			& k \la\be_{-1} = q^{-1}e^{4 i \phi - i \psi} \be_{-1}, \quad
			&& k \la \be_0 = e^{ - i \psi}\be_0, \quad
			&&k \la \be_1 = q e^{-4 i \phi - i \psi} \be_1, \\
			& k^* \la \be_{-1} = q^{-1}e^{-4 i \phi + i \psi}\be_{-1}, 
			&& k^* \la \be_0 = e^{i \psi}\be_0, 
			&& k^* \la \be_1 = q e^{4 i \phi + i \psi} \be_1, \\
			& f \la \be_{-1} = \sqrt{[2]_q}\be_0 ,
			&& f \la \be_0 = \sqrt{[2]_q}\be_1, 
			&&f \la \be_1 = 0, \\
			& e \la \be_{-1} = 0,
			&& e \la \be_0 = \sqrt{[2]_q}\be_{-1}, 
			&& e \la \be_1 = \sqrt{[2]_q} \be_0. 
		\end{aligned}
	\end{equation}
	
	Note that by acting $k$ on second degree monomials in eigenvectors one gets
	\begin{equation*}
		k \la (\be_j \be_k) = (k \la \be_j)(k \la \be_k) =  q^{i + j} e^{-4(j + k) i \phi - 2 i \psi}\be_j \be_k
	\end{equation*}
	Therefore, if free phase $\psi \not= 0$, the relations need to be homogeneous polynomials and due to an arbitrary $\phi$,
	the sum of indices $j+k$ must be constant. 
	
	\begin{lem}
		The commutation relations which are compatible with the action of the 
		$\cU_{q,\phi}(\hat{u}(2))$ algebra are, for $\psi\not=0$,
		\begin{equation}
			\label{cs22}
			\begin{aligned}
				& \be_0 \be_1 - q^2e^{2 i \psi} \be_1 \be_0 = 0, \\
				& qe^{-4 i \phi - i \psi} \be_{-1} \be_1 + (e^{i \psi} - q^2e^{ i \psi})\be_0 \be_0 - qe^{-4 i \phi + 3 i \psi}\be_1 \be_{-1} = 0, \\
				& \be_{-1} \be_0 - q^2e^{2 i \psi} \be_0 \be_{-1} = 0, \\
				& -q^{-1} e^{-4 i\phi - 2 i \psi} \be_{-1} \be_1 + \be_0 \be_0 -  q e^{-4 i \phi+ 2 i \psi} \be_1 \be_{-1} = 0. \\
			\end{aligned}    
		\end{equation}
	\end{lem}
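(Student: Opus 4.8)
The plan is to read the compatibility requirement representation-theoretically: for the quotient of the free algebra on $\be_{-1},\be_0,\be_1$ by a set of quadratic relations to be a braided $\cU_{q,\phi}(\hat u(2))$-module algebra, the action extended to products by the braided Leibniz rule (the first identity of \eqref{action}, assembled from $\Delta$ and $\Xi$) must carry the span $W$ of the relations into itself; conversely, any $H$-stable $W\subseteq V_1\otimes V_1$ produces such a module algebra. Thus the whole problem reduces to determining the $H$-submodules of the braided tensor square $V_1\otimes V_1$ and reading off their defining equations.

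First I would make the action on $V_1\otimes V_1$ explicit. With the grading $\delta(\be_j)=2j$ (the value that makes $\Xi$ on $V_1$ consistent with $\Xi$ on the generators, cf.\ \eqref{eq12}), the braided Leibniz rule reads
\begin{equation*}
	\begin{aligned}
		e\la(\be_j\be_k) &= (e\la\be_j)(k\la\be_k)+e^{-8ij\phi}(k^{-1}\la\be_j)(e\la\be_k),\\
		f\la(\be_j\be_k) &= (f\la\be_j)(k^*\la\be_k)+e^{8ij\phi}((k^*)^{-1}\la\be_j)(f\la\be_k),
	\end{aligned}
\end{equation*}
with $k,k^*$ acting diagonally as in \eqref{sRep02}. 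Because $k$ acts with pairwise distinct eigenvalues on the index sectors (this is exactly where $\psi\neq0$ and the arbitrariness of $\phi$ enter), $W$ must be homogeneous, so I may analyse each index sector separately, as already anticipated before the statement.

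Next I would extract the relations from highest-weight data. In the index-$1$ sector $\{\be_0\be_1,\be_1\be_0\}$ there is a unique line annihilated by the raising operator $f$; solving $f\la(a\,\be_0\be_1+b\,\be_1\be_0)=0$ forces $a\,q e^{i\psi}+b\,q^{-1}e^{-i\psi}=0$, which is the first relation of \eqref{cs22}. This vector generates the embedded spin-$1$ subrepresentation, so applying the lowering operator $e$ once yields its weight-$0$ member — precisely $\sqrt{[2]_q}$ times the second relation — and applying $e$ again (equivalently, demanding annihilation by $e$ in the index-$(-1)$ sector) yields the third relation. Finally, in the index-$0$ sector $\{\be_{-1}\be_1,\be_0\be_0,\be_1\be_{-1}\}$ I would isolate the $H$-invariant $\Omega$ by imposing $f\la\Omega=0$ and checking $e\la\Omega=0$; this singles out the fourth relation as the generator of the trivial summand. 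The two index-$0$ relations are then a basis of the weight-$0$ part $(V_1)_0\oplus(V_0)_0$, and the stated forms are the natural choice of that basis, the invariant together with the weight-$0$ descendant of the spin-$1$ vector.

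To close the argument I would invoke complete reducibility: since $qe^{-4i\phi}$ is not a root of unity, the braided module $V_1\otimes V_1$ decomposes as $V_2\oplus V_1\oplus V_0$ with weight multiplicity one in each sector, so the only $H$-submodules are sums of these three summands, and the choice giving a quadratic deformation of the sphere — keeping $A_2\cong V_2$ five-dimensional — is exactly $W=V_1\oplus V_0$, whose basis consists of the four relations computed above. I expect the main obstacle to be purely bookkeeping: fixing the grading constant so that the braiding phases $e^{\pm8ij\phi}$ are correct (a wrong value shifts every $\phi$-dependent coefficient), and assembling the index-$0$ pair so that the precise coefficients, in particular the factor $e^{i\psi}-q^2e^{i\psi}$, emerge as written rather than in some other basis of the same two-dimensional space.
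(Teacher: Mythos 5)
Your proposal is correct and follows essentially the same route as the paper: restrict to fixed index sectors using the $k,k^*$-eigenvalues (where $\psi\neq0$ and arbitrary $\phi$ enter), then pin down the coefficients by requiring closure under the braided Leibniz actions of $e$ and $f$, with the first relation annihilated by $f$, the second obtained as $[2]_q^{-1/2}\,e\la(\text{first})$, the third annihilated by $e$, and the fourth the invariant; your explicit index-$1$ computation and the resulting $b=-aq^2e^{2i\psi}$ check out against \eqref{cs22}. The only point where the paper is marginally more careful is that the grading constant $\delta(\be_j)=2j$ is \emph{derived} from the closure requirement rather than fixed in advance, but you flag exactly this issue yourself, so the representation-theoretic packaging ($V_1\otimes V_1\cong V_2\oplus V_1\oplus V_0$, $W=V_1\oplus V_0$) is just a cleaner organization of the same calculation.
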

	
	\begin{proof}
	 Using the argument above we see that all possible relations can be expressed as:
		
		\begin{equation}
			\label{cs21}
			\begin{aligned}
				& \alpha_1 \be_0 \be_1 + \alpha_2 \be_1 \be_0 = 0, \\
				& \beta_1 \be_{-1} \be_1 + \beta_2 \be_0 \be_0  + \beta_3 \be_1 \be_{-1} = 0, \\
				& \gamma_1 \be_{-1} \be_0 + \gamma_2 \be_0 \be_{-1} = 0, \\
				& \delta_1 \be_{-1} \be_1 + \delta_2 \be_0 \be_0 + \delta_3 \be_1 \be_{-1} = 0. \\
			\end{aligned}
		\end{equation}
		where all $\alpha$'s, $\beta$'s, $\gamma$'s, $\delta$'s are unknown constants.  The first three equations in \eqref{cs21} are 
		connected by the action of $f$ and $e$, for example, $f$ annihilates the first one, and the action of $e$ on it should give the 
		second one. The fourth identity, which is similar to the second one, differs by the assumption that it is annihilated by both 
		$e$ and $f$. To fix the coefficients we precisely examine the action of $f$ and $e$ on each of them, and a simple computation
		demonstrates that the relations (\ref{cs22}) provide a unique nontrivial solution for all coefficients.
		Moreover, such solution exists only for $l = 2$, so the algebra generators are homogeneous with degrees $\delta(\be_k) = 2k$
	\end{proof}

	The above form of relations can be further simplified. By substitution 
	\begin{equation}
		\label{tr2.1}
		\begin{aligned}
			&\be_1 = - e^{2 i \phi} q \sqrt{1 + q^2}^{-1} \be'_1, && &&
			\be_{-1} = e^{2 i \phi} \sqrt{1 + q^2}^{-1} \be'_{-1} \\
		\end{aligned}
	\end{equation}
	one can eliminate parameter $\phi$ from relations (\ref{cs22}) and reduce them further to:
	\begin{equation}
		\label{psinz1}
		\begin{aligned}
			& \be_1' \be_0 = q^{-2} e^{-2 i \psi} \be_0 \be_1' \qquad \qquad
			&& \be_{-1}' \be_0 = q^{2} e^{2 i \psi} \be_0 \be_{-1}' \\
			& \be_1' \be_{-1}' = -q^{-2} e^{-2 i \psi} \be_0 \be_0,
			&& \be_{-1}' \be_1' = -q^{2} e^{2 i \psi} \be_0 \be_0. \\
		\end{aligned}
	\end{equation}
	
	One can easily see that the basis elements of the above algebra are of the form $(e_0)^n (e_1)^m$ and $(e_0)^n (e_{-1})^m$. Note that all commutation relations depend	exclusively on $z=q^2e^{2i\psi}$ or its inverse and therefore the algebra cannot 
	be a star algebra unless $q^2=1$.  Furthermore, we can easily check the representations of this algebra. 
	
	\begin{lem}There are no bounded representations of algebra defined by \eqref{psinz1} such that $\be_0$ possess at least one eigenvector to nonzero eigenvalue.
	\end{lem}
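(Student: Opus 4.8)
The plan is to argue by contradiction, exploiting the fact that the relations \eqref{psinz1} make $\be_1'$ and $\be_{-1}'$ shift the spectrum of $\be_0$ by the fixed factor $z := q^2 e^{2i\psi}$, whose modulus $|z| = q^2$ is strictly less than $1$ in the non-star regime $q^2\neq 1$. Suppose a bounded representation on a Hilbert space $\mathcal H$ is given, and write $A=\be_1'$, $B=\be_{-1}'$, $C=\be_0$ for the corresponding bounded operators. Rewriting \eqref{psinz1} we have $AC = z^{-1}CA$, $BC = zCB$, $AB=-z^{-1}C^2$ and $BA=-zC^2$. Assume $Cv=\lambda v$ with $v\neq 0$ and $\lambda\neq 0$.

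First I would establish the spectrum-shifting behaviour. From $CB=z^{-1}BC$ one gets $C(Bv)=z^{-1}\lambda\,(Bv)$, so whenever $B^n v\neq 0$ it is an eigenvector of $C$ with eigenvalue $z^{-n}\lambda$, of modulus $q^{-2n}|\lambda|$. Since $C$ is bounded, its point spectrum lies in the disc of radius $\|C\|$, while $q^{-2n}|\lambda|\to\infty$; hence $B^n v=0$ for all large $n$. Let $m\ge 0$ be maximal with $u:=B^m v\neq 0$. Then $Bu=0$ and $Cu=\mu u$ with $\mu:=z^{-m}\lambda\neq 0$.

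The contradiction then comes from a single application of the relation $AB=-z^{-1}C^2$ to the top vector $u$: the left-hand side gives $A(Bu)=0$, while the right-hand side gives $-z^{-1}C^2u=-z^{-1}\mu^2 u\neq 0$, which is impossible for $u\neq 0$. Hence no bounded representation in which $\be_0$ has an eigenvector to a nonzero eigenvalue can exist.

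The argument is essentially free of obstacles; the only points requiring care are, first, the use of boundedness, which is exactly what forces the $B$-chain to terminate and which genuinely fails at $q^2=1$, where $|z|=1$ and bilateral-shift-type bounded representations do exist, consistent with the appearance of a $*$-structure there; and second, ensuring the termination argument is applied at the correct end of the chain, so that the annihilated vector $u$ still has nonzero $\be_0$-eigenvalue and the quadratic relation forces $\mu^2=0$.
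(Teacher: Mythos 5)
Your argument is correct and rests on the same basic mechanism as the paper's own proof: the commutation relations in \eqref{psinz1} turn $\be'_{-1}$ into a ladder operator that multiplies the $\be_0$-eigenvalue by $z^{-1}=q^{-2}e^{-2i\psi}$, and boundedness of $\be_0$ caps the moduli of its eigenvalues. The difference is where the contradiction is extracted. The paper exhibits the chain $v_{-k}=\be_{-1}'^{\,k}v_0$ with eigenvalues $q^{-2k}e^{-2ki\psi}\lambda$ and concludes directly from the unboundedness of $q^{-2k}$, tacitly assuming that every $v_{-k}$ is nonzero (a zero vector is not an eigenvector, so the chain could in principle terminate without contradicting boundedness). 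You do not make that assumption: you let boundedness force the chain to terminate at some maximal $u=B^m v\neq 0$ with $Bu=0$ and nonzero eigenvalue $\mu$, and then obtain the contradiction from the quadratic relation $\be'_1\be'_{-1}=-z^{-1}\be_0^2$, whose right-hand side $-z^{-1}\mu^2 u$ cannot vanish. Your proof is therefore strictly more complete --- it closes the ``chain terminates'' loophole that the paper's argument leaves open, at the modest cost of invoking one of the quadratic relations, which the paper's proof never uses. Both arguments implicitly require $q<1$ so that $|z|<1$; your closing observation that the statement genuinely fails at $|z|=1$, where bounded bilateral-shift-type representations exist, is accurate and consistent with the paper's remark that a $\ast$-structure appears only when $q^2=1$.
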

	
	\begin{proof}
		Assume that  $\be_0$ possesses at least one eigenvector $v_0 \in V$ with some eigenvalue $\lambda \neq 0$.
		Let us define:
		\begin{equation*}
			\begin{aligned}
				& v_k = \be'^k_1 v_0 \hspace{0.2 cm,} && v_{-k} = \be'^k_{-1} v_0 \hspace{0.2 cm ,} \\
			\end{aligned}
		\end{equation*}
		For any $k \in \fN$.
		Then clearly $v_k$ and $v_{-k}$ are eigenvectors of $\be_0$ to the eigenvalues $q^{2k} e^{2 k i \psi} \lambda$ and $q^{-2k} e^{-2 k i \psi} \lambda$. For example for $e_1$:
		
		\begin{equation*}
			\be_0 v_1 = \be_0 \be'_1 v_0 = q^2 e^{2 i \psi} \be'_1 \be_0 v_0 = q^2 e^{2 i \psi} \be'_1  \lambda v_0 = q^2 e^{2 i \psi} \lambda v_1 \hspace{0.2 cm.} 
		\end{equation*}
		
		If the representation of $\be_0$ is bounded, then there exist some $N\in \fRr_+$ such that $q^{2k} < N$ $\forall k \in \fN$. Therefore $q < 1$ so $q^{-1} > 1$ and $\lim_{k \to \infty} q^{-2k} = \infty$.
	\end{proof}
	
    Therefore, we are left with the case $\psi = 0$, which is much more interesting.
	
	\begin{lem}
		The quadratic algebra $S_{q,\phi}^2$, which is $\cU_{q,\phi}(\hat{u}(2))$   braided  module algebra has the form,
		
		\begin{equation}
			\label{cs12}
			\begin{aligned}
				& \be_0 \be_1 - q^2 \be_1 \be_0 = \lambda' \be_1, \\
				& qe^{-4 i \phi} \be_{-1} \be_1 + (1 - q^2)\be_0 \be_0 - qe^{-4 i \phi}\be_1 \be_{-1} = \lambda' \be_0, \\
				& \be_{-1} \be_0 - q^2 \be_0 \be_{-1} = \lambda' \be_{-1}, \\
				& -q^{-1} e^{-4 i\phi} \be_{-1} \be_1 + \be_0 \be_0 -  q e^{-4 i \phi} \be_1 \be_{-1} = \rho' 1.
			\end{aligned}    
		\end{equation}
	\end{lem}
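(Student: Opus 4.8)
The plan is to rerun the covariance analysis of the preceding lemma, but to \emph{drop} the homogeneity requirement that was forced by $\psi\neq0$. When $\psi=0$ the $k$-action on a quadratic monomial $\be_j\be_k$ produces only the factor $q^{\,j+k}e^{-4(j+k)i\phi}$, so the sole surviving constraint is $\phi$-homogeneity, i.e. invariance of the index $\delta$ (recall $\delta(\be_k)=2k$). Consequently the quadratic parts of the relations coincide with those of \eqref{cs22} specialised to $\psi=0$, while each relation may now acquire lower-order corrections whose $\delta$-degree matches that of its quadratic part: a relation of degree $+2$ may pick up a term $\propto\be_1$, one of degree $-2$ a term $\propto\be_{-1}$, and one of degree $0$ a combination $\mu\,\be_0+\rho\,1$, these being the only monomials of the correct index. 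First I would write down this most general ansatz, extending \eqref{cs21} by such admissible tails.

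Next I would impose full covariance by acting with the raising and lowering generators $f$ and $e$ through the braided Leibniz rule \eqref{action}, using the braidings $\Psi$ and $\Xi$ from \eqref{eq7} and \eqref{eq12} together with the fundamental action \eqref{sRep02}. The decisive structural input is the decomposition of the nine-dimensional space of quadratic monomials under the braided $\cU_{q,\phi}(\hat{u}(2))$-action: its weights (values of $\delta/2$) form the multiset $\{2,1,1,0,0,0,-1,-1,-2\}$, which organises into one copy of $V_2$ (surviving as the degree-two part of the quotient, which is why only four relations remain), one copy of $V_1$, and one copy of $V_0$. The four relations are exactly the components spanning $V_1\oplus V_0$. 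Relations (1), (2), (3) constitute the triplet $V_1$: acting with $f$ sends $(3)\to(2)\to(1)$ and $e$ reverses this, so by irreducibility the three linear tails must assemble into a single $V_1$-vector proportional to $(\be_1,\be_0,\be_{-1})$ with one common coefficient, which I would name $\lambda'$. Relation (4) is the singlet $V_0$; being annihilated by both $e$ and $f$, its right-hand side must likewise be annihilated, and since $e\la\be_0=\sqrt{[2]_q}\,\be_{-1}\neq0$ whereas $e\la 1=f\la 1=0$, the $\be_0$ tail is excluded and only a multiple $\rho'\,1$ of the unit survives. This pins down the shape of all four right-hand sides in \eqref{cs12}.

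The main obstacle is the explicit bookkeeping of the $e^{-4i\phi}$ phases generated by the braiding when $f$ and $e$ are pushed through the braided Leibniz rule: each passage of a generator past a $\be_{\pm1}$ contributes a factor from $\Xi$ in \eqref{eq12}, so matching the images of the quadratic parts against the prescribed linear tails is phase-sensitive and must be tracked monomial by monomial. Once the covariant coefficients are fixed, I would close the argument by confirming that the generated ideal is genuinely stable under all of $\cU_{q,\phi}(\hat{u}(2))$ — it suffices to check stability under $e$, $f$, $k$ and $k^*$ applied to the four generating relations — so that \eqref{cs12} indeed presents a braided $\cU_{q,\phi}(\hat{u}(2))$-module algebra, completing the proof.
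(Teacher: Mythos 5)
Your proposal follows essentially the same route as the paper's own proof: write the most general $k,k^*$-invariant quadratic ansatz with admissible linear tails, then impose covariance under $e$ and $f$ — the raising/lowering argument linking the first three relations and the annihilation of the fourth as a singlet — to pin down the coefficients; your explicit $V_2\oplus V_1\oplus V_0$ decomposition of the quadratic monomials merely formalizes what the paper states informally. Both versions leave the actual $e,f$ computation (which, as the paper notes, also forces $\delta(\be_k)=2k$ rather than taking it as given) as routine phase bookkeeping.
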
	
	
	\begin{proof}
		Because each relation should be invariant under the action of $k$ and $k^*$, similarly 
		as in case of $\psi \neq 0$, we first see that all possible relations restrict to:
		\begin{equation}
			\label{cs11}
			\begin{aligned}
				& \alpha_1 \be_0 \be_1 + \alpha_2 \be_1 \be_0 = \lambda_1 \be_1, \\
				& \beta_1 \be_{-1} \be_1 + \beta_2 \be_0 \be_0  + \beta_3 \be_1 \be_{-1} = \lambda_2 \be_0, \\
				& \gamma_1 \be_{-1} \be_0 + \gamma_2 \be_0 \be_{-1} = \lambda_3 \be_{-1}, \\
				& \delta_1 \be_{-1} \be_1 + \delta_2 \be_0 \be_0 + \delta_3 \be_1 \be_{-1} = \rho 1 ,
			\end{aligned}
		\end{equation}
		where all $\alpha$'s, $\beta$'s, $\gamma$'s, $\delta$'s, $\lambda$'s and $\rho$ are unknown constants. 
		Even though  these relations are different than the one for $\psi \neq 0$ case, the argument with raising and lowering indexes still holds, so the first 
		three relations have to be connected. Moreover, because the identity $1$ can be treated as a $1$-dimensional representation, both 
		$f$ and $e$ should annihilate the fourth relation. To prove the statement one has to examine relations discussed above by calculating the action of $e$ and $f$ on them. Furthermore, during calculation, one gets the constraint $l = 2$, so algebra generators are homogeneous with degrees 
		$\delta(\be_k) = 2k$.
	\end{proof}
	
	One can check by a straightforward computation, that the case $\psi = 0 $ after the same substitution as before:
	\begin{equation}
		\label{tr2.1_copy}
		\begin{aligned}
			&\be_1 = - e^{2 i \phi} q \sqrt{1 + q^2}^{-1} \be'_1, && &&
			\be_{-1} = e^{2 i \phi} \sqrt{1 + q^2}^{-1} \be'_{-1} \\
			& \lambda = (1 + q^2)\lambda', && && \rho = (1 + q^2) \rho' \\ 
		\end{aligned}
	\end{equation}
	corresponds to the Podle\'s spheres described in \cite{podles}:
	\begin{equation}
		\label{Po}
		\begin{aligned}
			& (1 + q^2) \be_0 \be'_1 - q^2(1 + q^2) \be'_1 \be_0 = \lambda \be'_1, \\
			& q^2(\be'_1 \be'_{-1} - \be'_{-1}\be'_1) + (1 - q^4)\be_0 \be_0 = \lambda \be_0 ,\\
			& (1 + q^2)\be'_{-1}\be_0 - q^2(1+ q^2)\be_0 \be'_{-1} = \lambda \be'_{-1} ,\\
			& \be'_{-1}\be'_1 + (1 + q^2)\be_0 \be_0 + q^2 \be'_1 \be'_{-1} = \rho 1.\\
		\end{aligned}
	\end{equation}
	Observe that this algebra admit a $*$-structure of the form $\be_i^* = \be_{-i}$ if and only if $\lambda, \rho \in \fRr$.
	In this \eqref{Po} basis the left action of the $\cU_{q,\phi}(\hat{u}(2))$ braided Hopf algebra takes a form:
	
	\begin{equation}
		\label{sRep1}
		\begin{aligned}
			&k \la \be'_1 \!=\! q e^{-4 i \phi} \be'_1, && k \la \be'_{-1} \!=\! q^{-1}e^{4 i \phi} \be'_{-1}, \\
			&k^* \la \be'_1 \!=\! q e^{4 i \phi} \be'_1,  && k^* \la\be'_{-1} \!=\! q^{-1}e^{- 4 i \phi}\be'_{-1}, \\
			& f \la \be'_1 \!=\! 0, &&  f \la \be'_{-1} = e^{-2 i \phi} q^{-\frac{1}{2}}(1 + q^2) \be_0 , \\
			& e \la \be'_1 \!=\! -e^{-2 i \phi} q^{-\frac{3}{2}}(1 + q^2) \be_0, \quad  && e \la \be'_{-1} \!=\! 0, \\
	 &k\la \be_0 \!=\! \be_0, &&  k^* \la \be_0 \!=\! \be_0, \\
	 &f \la \be_0 = - e^{2 i \phi} q^{\frac{1}{2}} \be'_1, \quad  && e \la \be_{0} \!=\! e^{2 i \phi} q^{-\frac{1}{2}} \be'_{-1}.
	\end{aligned}
	\end{equation}
	
	An additional equivalent description of this family of Podle\'s spheres (which uses a different set of generators) may be found in the Appendix \ref{podrep}.
	
	For non-braided algebras $\cU_q(su(2))$ and $SU_{q}(2)$ Podle\'s spheres might be found as subalgebras of $SU_{q}(2)$ annihilated by some self-adjoint, quasi primitive $x \in \cU_q(su(2))$, that is
	\begin{equation*}
		\Delta x  = x \otimes h_1 + h_2 \otimes x
	\end{equation*}
	for some $h_1,h_2 \in \cU_q(su(2))$. Indeed subspace of $SU_q(2)$ annihilated by such $x$ is a subalgebra, since if $x \la a = 0$ and $x \la b = 0$ for some $a,b \in SU_{q}(2)$, then 
	
	\begin{equation*}
		x \la (ab) = (x \la a)(h_1 \la b) + (h_2 \la a)(x \la b) = 0.
	\end{equation*}
	
	Below we generalize this property for braided Hopf algebras.
	
	\begin{thm}
		Let $H$ be a braided Hopf algebra and $A$ an $H$-braided module algebra.  For any element $x \in H$ let us denote 
		the kernel  of its action by $A_x = \{ a \in A: x \la a =0 \}$. If an element $x \in H$ is 
		quasi-primitive, and the braiding $\Xi$ between $H$ and $A$ satisfies
		\begin{equation*}
		   \Xi(x \otimes a) = \tilde{a} \otimes x, \qquad \Xi(h_1 \otimes b) \in A_x \otimes H, 
		\end{equation*}
		for any $a \in A_x, b \in A_x$, then $A_x$ is an algebra. 
	\end{thm}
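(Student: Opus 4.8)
The plan is to mimic the non-braided argument displayed just above the theorem, namely the computation $x \la (ab) = (x \la a)(h_2 \la b) + (h_2 \la a)(x \la b)$, but carry it out within the braided setting using the first compatibility law of \eqref{action}. The starting point is to expand $x \la (ab)$ for $a, b \in A_x$ using the left-hand rule of \eqref{action}, which for a quasi-primitive $x$ with $\Delta x = x \otimes h_1 + h_2 \otimes x$ produces a sum of two terms corresponding to the two summands of $\cop(x)$.

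First I would write out \eqref{action} (first identity) with $\cop(x) = x \otimes h_1 + h_2 \otimes x$ substituted in. In the braided diagram this yields, after distributing over the coproduct,
\begin{equation*}
	x \la (ab) \;=\; (x \la a)\,\bigl(h_1 \la b\bigr) \;+\; \bigl(\widetilde{h_2 \la a}\bigr)\,(x \la b),
\end{equation*}
where the tilde accounts for the braiding $\Xi$ that must be applied to the pair $(h_2, a)$ before the two actions can be multiplied; this is precisely the $\x{{{}^\Xi}}$ box that appears between the coproduct and the two $\hlu$ boxes in the right-hand side of \eqref{action}. I would emphasize that in the braided category one is not free to simply transpose $h_2$ past $a$: the braiding $\Xi$ is mandatory, which is exactly why the theorem's hypothesis is phrased in terms of $\Xi$ rather than an ordinary flip.

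Now I would invoke the two hypotheses to kill each term. In the second summand, $x \la b = 0$ because $b \in A_x$, so that term vanishes regardless of the braiding applied to $(h_2,a)$. For the first summand I would use the assumption $\Xi(x \otimes a) = \tilde a \otimes x$ together with $\Xi(h_1 \otimes b) \in A_x \otimes H$: the braiding condition on $x$ ensures the diagram can be rearranged so that $x$ acts on $a$ (giving $x \la a = 0$), while the condition $\Xi(h_1 \otimes b) \in A_x \otimes H$ guarantees that the $h_1$-braided copy of $b$ still lands in $A_x$ so the remaining structure is well-defined. Writing $\Xi(h_1\otimes b)=\sum b' \otimes h'$ with each $b' \in A_x$, the first summand becomes a sum of terms each containing the factor $x \la a = 0$. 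Hence $x \la (ab)=0$, so $ab \in A_x$ and $A_x$ is closed under multiplication; closure under the unit follows from $x \la 1 = \epsilon(x)1 = 0$ using \eqref{action_units} and quasi-primitivity (which forces $\epsilon(x)=0$).

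\textbf{The main obstacle} will be bookkeeping the braiding carefully: in the braided diagrammatic calculus the two hypotheses on $\Xi$ must be applied at exactly the right strands, and one has to confirm that the graphical manipulation turning the right-hand side of \eqref{action} into the two-term expression genuinely uses only the stated properties of $\Xi$ (its compatibility with the coproduct \eqref{xi-coprod} and the two displayed assumptions) and does not secretly require $\Xi$ to be an involution or the braiding to be symmetric. I expect the cleanest route is to argue entirely diagrammatically, decomposing the coproduct node into its two quasi-primitive summands and then reading off that each resulting diagram contains either a $x \la b$ cap (second term) or, after sliding $x$ through the $\Xi$ box via the first hypothesis, a $x \la a$ cap (first term), both of which evaluate to zero on $A_x$.
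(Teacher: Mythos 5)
Your overall strategy is the paper's: expand $x \la (ab)$ with the braided module-algebra law \eqref{action}, split $\cop x = x \otimes h_1 + h_2 \otimes x$ into its two summands, and kill each one. However, your central displayed formula places the braiding on the wrong strands, and as a result you assign the two hypotheses to the wrong summands. In \eqref{action} the box $\Xi$ acts on $h_{(2)} \otimes a$, i.e.\ on the \emph{second} leg of the coproduct and the \emph{first} algebra factor. The correct expansion (which is the one the paper writes) is
\begin{equation*}
x \la (ab) = \bigl(x \la \Xi(h_1 \otimes a)_{(1)}\bigr)\,\bigl(\Xi(h_1\otimes a)_{(2)} \la b\bigr) \;+\; \bigl(h_2 \la \Xi(x\otimes a)_{(1)}\bigr)\,\bigl(\Xi(x\otimes a)_{(2)} \la b\bigr).
\end{equation*}
In the first summand it is the hypothesis $\Xi(h_1\otimes a)\in A_x\otimes H$ (applied to the element $a\in A_x$, not to $b$) that makes the left factor vanish, because $x$ then acts on an element of $A_x$; the hypothesis on $\Xi(x\otimes a)$ plays no role there. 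In the second summand it is the hypothesis $\Xi(x\otimes a)=\tilde a\otimes x$ that is indispensable: only because the $H$-leg of $\Xi(x\otimes a)$ is again $x$ does the factor $x\la b=0$ appear at all. For a general braiding $\Xi(x\otimes a)=\sum_i a_i\otimes y_i$ the term would read $\sum_i(h_2\la a_i)(y_i\la b)$ and would have no reason to vanish, contrary to your claim that the second term dies ``regardless of the braiding.'' Your displayed formula $x\la(ab)=(x\la a)(h_1\la b)+(\widetilde{h_2\la a})(x\la b)$ is itself a warning sign: taken literally it would make both hypotheses on $\Xi$ superfluous, since $x\la a=0$ and $x\la b=0$ already hold. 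The conclusion is correct and the route is the paper's, but the bookkeeping of the braiding --- which is the entire content of the braided version of this classical fact --- needs to be redone as above.
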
	
	\begin{proof}
		Assume that $a,b \in A_x$. We verify: 
		\begin{equation*}
			\begin{aligned}
				x \la ab 
				&=(\la \cdot \la)(\id \otimes \Xi \otimes \id) \cop x \otimes a \otimes b =\\
				&= \left( x \la \Xi(h_1,a)_{(1)} \right)  \left(  \Xi(h_1,a)_{(2)} \la b \right) 
				+ \left( h_2 \la \Xi(x,a)_{(1)} \right)  \left(  \Xi(x,a)_{(2)} \la b \right) =\\
				& = 0,
			\end{aligned}	
		\end{equation*}
		where the vanishing of both terms follows directly from our assumption.
	\end{proof}	
	
	We can now apply it to the case of $\cU_{q, \phi}(\hat{u}(2))$ and $SU_{q,\phi}(2)$. Note, that
	due to $\phi\not=0$ there is no selfadjoint element of the symmetry algebra that
	satisfies these conditions apart from $x = kk^* -1 $. 
	
	It is easy to see that the subalgebra of $SU_{q, \phi}(2)$, which arises in that way,
	using the right action can be identified with the above spheres, in the following way:
	\begin{equation}
		\label{Po_exm}
		\begin{aligned}
			&\be'_{1} = (1 + q^2)q^{-1}e^{4 i \phi} \gamma^* \alpha^*, &
			\be_{0} = 1 - (1 + q^2)\gamma^* \gamma, \\
			&\be'_{-1} = (1 + q^2)q^{-1}e^{-4 i \phi} \alpha \gamma, 
		\end{aligned}
	\end{equation}
	with $\lambda = 1-q^4$, $\rho = 1 + q^2$ and the $\cU_{q,\phi}(\hat{u}(2))$ action comes from its left action on $SU_{q, \phi}(2)$.
	
	
 \section{Star structure}
	All algebras discussed so far were, in fact, braided Hopf
	$\ast$-algebras or $\ast$-algebras. However, we did not describe yet the compatibility with $\ast$-structure  along the way, because
	the requirement that all braided structures are compatible with the star structure brings a lot of unexpected complications. 
	
	In this section we recall the basic properties of the star structure which plays a significant role in defining the compatibility of the braided module structure with the conjugation. The star structure compatibility for a Hopf algebra action of $H$ on $A$ is defined by the following condition, 
	\begin{equation}
		\label{non_br_sStr}
		(h \la a)^* = (Sh)^* \la a^* , \quad \forall h\in H, a \in A.
	\end{equation}
	It needs to be appropriately translated into the braided case and, in general,
	will require additional compatibility conditions between the star, braiding and
	antipodes. As our interest here lies in the area of specific braiding which is
	obtained by the twisting procedure, we shall restrict our attention to such a case
	only. Nonetheless, we believe that our results may be expanded to a broader framework, leaving a more general approach for future work.
	
	\begin{thm} \label{thm_sStruct}
		Let $H$ be a braided Hopf algebra obtained from twisting  and $A$ be a left braided $H$ module, such that the mutual braiding between $H$ and $A$: $\Xi$ is
		also determined by the degrees $\delta$ of homogeneous elements satisfying condition \eqref{hom1}. 
		If for any homogeneous elements $h \in H$, $a \in A$,
		\begin{equation}
			\delta(h)\delta(a) = \delta(a^*)\delta(h^*),
		\end{equation}
		then left action is $*$-compatible in the sense:
		\begin{equation}
			\label{sStruc_com}
			\hstretch 90  \vstretch 60
			\begin{tangle}
				\object{H} \step[3] \object{A} \\
				\lu[3] \\
				\step[3] \O* \\
				\step[3] \object{A} \\
			\end{tangle}
			\quad = \quad
			\begin{tangle}
				\step[1] \object{H} \step[3] \object{A} \\
				\step[1] \S \step[3] \id \\
				\obox 5* \\
				\step[1] \lu[3] \\
				\step[4] \object{A} \\
			\end{tangle} \quad , 
		\end{equation}
		where the $*$-structure on tensor product $H \otimes A$  is defined by:
		\begin{equation*}
			(h \otimes a)^* = e^{- 2 i \phi \delta(h) \delta(a)} h^* \otimes a^*
		\end{equation*}
		
	\end{thm}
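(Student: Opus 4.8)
The plan is to reduce the diagrammatic identity \eqref{sStruc_com} to an equality of scalar phases and then to recognise that equality as the hypothesis. Since $*$ is conjugate-linear and all of $\cop$, the products, $S$, $\Xi$ and $\la$ are graded maps, it suffices to treat homogeneous $h\in H$ and $a\in A$. A short degree count (using $\delta\circ S=\delta$, which follows from $\chi\circ S$ being the antipode of $\fC[z,z^{-1}]$, together with the grading behaviour of $*$) shows that both sides of \eqref{sStruc_com} are homogeneous of the same degree, so they can differ only by a scalar and the whole statement becomes one phase matched against another. Throughout I write $\la_0,\cop_0,S_0$ for the untwisted ($\phi=0$) operations, for which the ordinary $*$-compatibility \eqref{non_br_sStr}, namely $(h\la_0 a)^*=(S_0h)^*\la_0 a^*$, is available.

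I would then expand both sides with the twisting dictionary of Theorem \ref{Thm33}, in which $S(h)=e^{i\phi\delta(h)^2}S_0(h)$, $\Xi(h\otimes a)=e^{2i\phi\delta(h)\delta(a)}\,a\otimes h$, and the tensor-product $*$ carries the prescribed factor $e^{-2i\phi\delta(h)\delta(a)}$. On the left of \eqref{sStruc_com} I apply the action and then $*$: conjugate-linearity turns the twisting phase of $h\la a$ into its complex conjugate, and \eqref{non_br_sStr} rewrites the classical core as $(S_0h)^*\la_0 a^*$. On the right I read the diagram downward: $S\otimes\id$ inserts $e^{i\phi\delta(h)^2}$, the tensor-product $*$ inserts $e^{-2i\phi\delta(Sh)\delta(a)}=e^{-2i\phi\delta(h)\delta(a)}$ and replaces $(Sh)^*$ by $e^{-i\phi\delta(h)^2}(S_0h)^*$, and the final application of $\la$ produces the same classical core $(S_0h)^*\la_0 a^*$ times its own twisting phase. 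After these substitutions both sides equal the single element $(S_0h)^*\la_0 a^*$ multiplied by an explicit phase, so \eqref{sStruc_com} is equivalent to the coincidence of those two phases.

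To compare the phases I split the twisting phase of $\la$, as given by the construction of the action from a pairing \eqref{constr}, into a part coming from $\Xi$ and $\cop$ (a pure function of the degrees) and a part coming from the braided pairing. The pairing part is the one piece that is not controlled by $\delta$ alone, and it is precisely here that the braided analogue of the last identity in \eqref{pairingcond}, $\langle (Sh)^*,a\rangle=\overline{\langle h,a^*\rangle}$, is used: after the conjugation introduced by $*$ on the left and the appearance of $(S_0h)^*$ and $a^*$ on the right, this identity makes the two pairing contributions cancel. What survives is the purely degree-dependent balance of the phases inserted by $\Xi$, by the tensor-product $*$ and by $S$; collecting exponents, the net obstruction is proportional to $\delta(h)\delta(a)-\delta(a^*)\delta(h^*)$, which vanishes by hypothesis. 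This closes the argument.

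I expect the real work to be the bookkeeping forced by the conjugate-linearity of $*$: every phase carried past a $*$ must be conjugated, and the braiding phase inserted by $\Xi$ is evaluated on the left for the pair $(h,a)$ but on the right, after the antipode and the star, effectively for the conjugated pair (degrees $\delta((Sh)^*)=\delta(h^*)$ and $\delta(a^*)$). The identity $\delta(h)\delta(a)=\delta(a^*)\delta(h^*)$ is exactly what equates these two evaluations, and the main point to verify carefully is that, once the pairing's own $*$-compatibility has absorbed the non-degree phase, no compatibility between $*$, $S$ and $\Xi$ beyond this single degree identity is required.
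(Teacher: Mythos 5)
Your route is genuinely different from the paper's. The paper does not verify \eqref{sStruc_com} directly: it posits an unknown phase $\varphi(\delta(h),\delta(a))$ in $(h\otimes a)^*=\varphi\, h^*\otimes a^*$, demands that \eqref{sStruc_com} be consistent with the multiplications of $H$ and of $A$ (the diagram chases in Appendix \ref{sStuc_app}), extracts the two functional equations \eqref{s_comp1_copy}--\eqref{s_comp2_copy}, and solves them to conclude $\varphi(\delta(h),\delta(a))=e^{-2i\phi\delta(h)\delta(a)}$, with unitality killing the residual factor $e^{\alpha\delta(h)}$. In other words, the paper's proof is a uniqueness/consistency argument for the tensor-product star; the validity of \eqref{sStruc_com} for the concrete actions is only asserted afterwards (``one may also check by simple calculation on generators''). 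You instead take the stated $\varphi$ as given, strip off the twisting phases of $S$, $\Xi$, $\cop$ and $\la$ via Theorem \ref{Thm33}, reduce both sides to the common classical core $(S_0h)^*\la_0 a^*$, and match the leftover phases, with the hypothesis $\delta(h)\delta(a)=\delta(a^*)\delta(h^*)$ appearing exactly as the final balance condition. That is a legitimate verification and has the merit of making explicit where the hypothesis enters.

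Two caveats. First, your argument needs the untwisted action to satisfy the classical compatibility \eqref{non_br_sStr}; this is not among the theorem's stated hypotheses, but it is in fact indispensable (at $\phi=0$ the statement degenerates to \eqref{non_br_sStr} itself, which is not automatic for an arbitrary module algebra), so you have surfaced an assumption the paper also uses tacitly. Second, your treatment of the ``non-degree part'' leans on the action being built from a braided pairing via \eqref{constr} and on the star identity for pairings in \eqref{pairingcond}; the theorem is stated for an arbitrary braided module algebra and is applied in the paper also to the adjoint action and to the sphere modules. This restriction is removable --- for an action obtained by twisting, the twisted and untwisted module maps differ only by a phase depending on the gradings, so no pairing is needed once the first point is granted --- but as written your proof covers only the pairing-induced case. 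You should also note that the twisting phases of the product and pairing depend on $\mu$ and $\nu$ separately, not only on $\delta$, so the claim that everything reduces to a function of $\delta$ alone needs one extra line of justification before the final cancellation.
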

	
	\begin{proof}
		We start by assuming that the conjugation of the tensor product 
		of two homogeneous elements $h \otimes a \in H \otimes A$ acquires a phase
		factor dependent on their degrees,
		\begin{equation}
			(h \otimes a)^* =  \varphi(\delta(h), \delta(a)) \, h^* \otimes a^*,
		\end{equation}
		and we use it in the condition \eqref{sStruc_com} to see what restriction 
		on the function $\varphi$ arises from the compatibility relations. 
		
		To do so, one must check the compatibility between $*$-structure \eqref{sStruc_com} and multiplication both in $H$ and $A$. Because the braiding is defined as transportation times scalar factor and $*$-structure differs from ordinary definition also by some scalar function, it should come as no surprise that the above conditions would give us only the constraints for those scalar terms.
		The full calculations of compatibility conditions are presented in Appendix \ref{sStuc_app} due to their length. The resulting constraints are:
		\begin{equation}
			\label{s_comp1_copy}
			\varphi(\delta(g) + \delta(h), \delta(a)) = \varphi(\delta(h), \delta(g) + \delta(a)) \varphi(g,a) e^{2 i \phi \delta(h) \delta(g)} ~,
		\end{equation}
		\begin{equation}
			\label{s_comp2_copy}
			\varphi(\delta(h), \delta(a) + \delta(b)) = \varphi(\delta(h_{(1)}),\delta(a)) \varphi(\delta(h_{(2)}), \delta(b)) e^{- 2 i \phi \delta(h_{(2)}) \delta(a)} e^{-2 i \phi \delta(h_{(1)}) \delta(b)} ~,
		\end{equation}
		hence, we shall only manipulate \eqref{s_comp1_copy} and \eqref{s_comp2_copy} to derive the general form of $\varphi(~\cdot~ , ~\cdot~)$. First, we may set $\delta(g) = 0$ in \eqref{s_comp1_copy} to obtain:
		\begin{equation*}
			\varphi(\delta(h), \delta(a)) = \varphi(\delta(h), \delta(a))\varphi(0, \delta(a)) ~~,
		\end{equation*}
		Therefore $\varphi(0,\delta(a)) = 1$. Next, consider  \eqref{s_comp2_copy} assuming $\delta(h) = 0$, which implies $\delta(h_{(1)}) + \delta(h_{(2)}) = 0$:
		\begin{equation}
			\label{s_comp3}
			\begin{aligned}
				& 1 = \varphi(\delta(h_{(1)}),\delta(a)) \varphi(-\delta(h_{(1)}),\delta(b)) e^{ 2 i \phi \delta(h_{(1)}) \delta(a)} e^{-2 i \phi \delta(h_{(1)}) \delta(b)} \\
				& \varphi(\delta(h_{(1)}),\delta(a))^{-1}  e^{-  2 i \phi \delta(h_{(1)}) \delta(a)} = \varphi(-\delta(h_{(1)}),\delta(b)) e^{2 i \phi \delta(-h_{(1)}) \delta(b)} \\
			\end{aligned}
		\end{equation}
		
		The left-hand side of \eqref{s_comp3} depends on $\delta(a)$, while right hand side does not. The same holds also for $\delta(b)$ on the right-hand side of \eqref{s_comp3}, hence both sides are some function $f$ of $\delta(h_{(1)})$.  By changing notation $h_{(1)} \rightarrow g$, one gets 
		\begin{equation}
		\label{s_comp3.5}
			\varphi(\delta(g),\delta(a))^{-1}  e^{-  2 i \phi \delta(g) \delta(a)}  = f(\delta(g)),
		\end{equation}
		with $f(0) = 1$. Substitution of \eqref{s_comp3.5} back into equation \eqref{s_comp2_copy} after short manipulation gives: 
		\begin{equation}
			\label{s_comp4}
			\varphi(\delta(h), \delta(a) + \delta(b)) e^{2 i \phi \delta(h) \delta(a)} e^{2 i \phi \delta(h) \delta(b)} = f(\delta(h_{(1)}))^{-1}f(\delta(h) - \delta(h_{(1)}))^{-1}
		\end{equation}
        Similarly as before, both sides are dependent only on $\delta(h)$ since this is the only variable present on both sides. Let us thus rewrite the left hand side of \eqref{s_comp4} as $F(\delta(h))$. This gives us following condition for $f(x)$:
		\begin{equation*}
			1 = F(y) f(x) f(y-x).
		\end{equation*}
		If one sets $x = y$, then above simplifies to $F(x) = f(x)^{-1}$, so in general:
		\begin{equation*}
			f(y)f(x)^{-1} = f(y-x).
		\end{equation*}
		Therefore $f(x)$ is just an exponent function $f(x) = e^{\alpha x}$ with some independent coefficient $\alpha$.  Hence the most general form of a braiding factor $\varphi$ is: 
		\begin{equation}
			\label{s_comp5}
			\varphi(\delta(g), \delta(a)) = e^{\alpha \delta(g)} e^{-2 i \phi\delta(g) \delta(a)}
		\end{equation}
		If algebra $A$ is unital, then by \ref{s_comp5} we have: 
		\begin{equation*}
			\la \circ  ( S(g) \otimes 1)^* = e^{\alpha \delta(g)} S(g)^* \la 1^* =   e^{\alpha \delta(g)} 1(S(g)^*) =   e^{\alpha \delta(g)} 1(S(g))^* = e^{\alpha \delta(g)} \epsilon(g)^*,
		\end{equation*}
		but on the other hand, it must be equal to:
		\begin{equation*}
			(g \la 1)^* = \epsilon(g)^*.
		\end{equation*}
		Therefore $\alpha$ has to be equal to $0$.
	\end{proof}
	
	One may check by simple calculation on generators that action of $\cU_{q,\phi}(\hat{u}(2))$ on itself via adjoin action, on  $SU_{q,\phi}(2)$ and on $S_{q,\phi}^2$ is $*$-compatible in the sense of the definition \eqref{sStruc_com}.

	\section{Conclusions and outlook}
	
	The presented work explicitly constructs the symmetries, understood as braided Hopf algebra actions, of the $SU_{q,\phi}(2)$ algebra and the Podle\'s spheres. 
	To provide this concrete answer we first systematically developed the theory of left action of braided Hopf algebras on algebras. Then we illustrated it with the adjoint action of a braided Hopf algebra on itself and an action obtained from the pairing between two braided Hopf algebras. We further developed the notion of compatibility of the star structure with the action of Hopf algebras on algebras 
	in a braided case, considering the braiding obtained by the twisting procedure.
	
	Subsequently we introduced a new braided Hopf algebra, $\cU_{q,\phi}(\hat{u}(2))$, as an
	analogue of $\cU_{q}(su(2))$ algebra preserving its duality to braided $SU_{q, \phi}(2)$ algebra. 
	
	By using $\cU_{q,\phi}(\hat{u}(2))$ we could provide an explicit example of braided left action obtained from the pairing with $SU_{q, \phi}(2)$ and, by reversing the braiding, the right action.
	This has not been known so far and is a crucial step into a systematic application of the discovered action together with the representation theory 
	of the $\cU_{q,\phi}(\hat{u}(2))$ algebra to develop the spectral geometry
	of the braided $SU_{q, \phi}(2)$ algebra. Moreover, the  presented derivation of the algebras
	of Podle\'s spheres is a different one and, in a sense, complementary to 
	the one presented in \cite{podles}. 
	
	The concrete realisation of the braided Hopf algebra acting on them shows, that one needs to pass to a larger object to accommodate
	for additional constraints imposed by the braiding. Thus, the Podle\'s spheres
	allow, as symmetries in the sense of an action, not only the usual $\cU_{q}(su(2))$ Hopf algebra but also a larger braided Hopf algebra $\cU_{q, \phi}(\hat{u}(2))$.
	
	There are several interesting directions to follow in future research. First of all,
	out of the family of Podle\'s spheres, only the standard one can be realized
	as a subalgebra of the braided $SU_{q,\phi}(2)$ algebra.  This, together with the $q \to1$ limit in the generic case, or $e^{4 i \phi}$ being a root of unity, shall be studied in more detail.
	
	Finally, having a braided analogue of $\cU_q(su(2))$ it is a challenging aim
	to developed theory of spectral triples for $SU_{q, \phi}(2)$ in analogy to \cite{dirac} and for  Podle\'s spheres in braided category in analogy to \cite{dirac_all_podles}.

	\newpage
	\appendix 
	\section{Appendix}
	
	\subsection{Braided Antipode}\label{branti}
	
	In this Appendix we recall the properties of the braided antipode, which are used in our work. For the ordinary Hopf algebra, antipode is an algebra homomorphism from Hopf algebra $H$ to algebra with reversed multiplication $H^{op}$: 
	\begin{equation}
		\label{non_br_anty_m}
		S(gh) = S(h)S(g)
	\end{equation}
	Following \cite{brGeo} we generalize this property for the braided Hopf algebra.
	\begin{equation*}
		\hstretch 90  \vstretch 60
		\begin{aligned}
			\begin{tangle}
				\object{H} \step[2] \object{H} \\
				\S \step[2] \S \\
				\x {{{}^\Psi}} \\
				\cu \\
				\step[1] \object{H}
			\end{tangle}
			\quad & = \quad
			\begin{tangle}
				\step[1] \object{H} \step[4] \object{H} \\
				\cd \step[2] \cd \\
				\id \step[2] \x {{{}^\Psi}} \step[1.21] \id \\
				\S \step[2] \S \step[2] \counit \step[2] \counit \\
				\x {{{}^\Psi}} \\
				\cu \\    
				\step[1] \object{H}
			\end{tangle}
			\quad = \quad
			\begin{tangle}
				\step[1] \object{H} \step[4] \object{H} \\
				\cd \step[2] \cd \\
				\id \step[2] \x {{{}^\Psi}} \step[1.21] \id \\
				\S \step[2] \S \step[2] \cu \\
				\x {{{}^\Psi}} \step[2.21] \counit\\
				\cu  \step[3] \unit \\
				\step[1] \Cu \\
				\step[3] \object{H}
			\end{tangle}
			\quad = \quad
			\begin{tangle}
				\step[1] \object{H} \step[4] \object{H} \\
				\cd \step[2] \cd \\
				\id \step[2] \x {{{}^\Psi}} \step[1.21] \id \\
				\id \step[2] \id \step[2] \cu \\
				\S \step[2] \S \step[2] \cd \\
				\x {{{}^\Psi}} \step[1.21] \id \step[2] \S \\
				\cu  \step[2] \cu \\
				\step[1] \Cu \\
				\step[3] \object{H}
			\end{tangle}
			\quad  =\\
			&= \quad
			\begin{tangle}
				\step[1] \object{H} \step[5] \object{H} \\
				\cd \step[2] \Cd \\
				\id \step[2] \x {{{}^\Psi}} \step[3.21] \id \\
				\id \step[2] \id \step[1] \cd \step[2] \cd \\
				\S \step[2] \S \step[1] \id \step[2] \x {{{}^\Psi}} \step[1.21] \id \\
				\x {{{}^\Psi}} \step[0.21] \cu \step[2] \cu \\
				\cu  \step[2] \id \step[4] \S \\
				\step[1] \nw1 \step[2] \Cu \\
				\step[2] \Cu \\
				\step[4] \object{H}
			\end{tangle}
			\quad = \quad
			\begin{tangle}
				\step[3] \object{H} \step[5] \object{H} \\
				\step[1] \Cd \step[2] \cd \\
				\step[1] \id \step[4] \x {{{}^\Psi}} \step[1.21] \id \\
				\cd \step[2] \cd \step[1] \id \step[2] \id \\
				\id \step[2] \x {{{}^\Psi}} \step[1.21]  \id \step[1] \id \step[2] \id \\
				\S \step[2] \S \step[2] \cu \step[1] \cu \\
				\x {{{}^\Psi}} \step[1.21] \ne1 \step[3] \S \\
				\cu \step[2] \Cu \\
				\step[1] \nw1 \step[4] \id \\
				\step[2] \Cu \\
				\step[4] \object{H}
			\end{tangle}
			\quad = \quad
			\begin{tangle}
				\step[2] \object{H} \step[5] \object{H} \\
				\Cd \step[2] \cd \\
				\id \step[4] \x {{{}^\Psi}} \step[1.21] \id \\
				\nw1 \step[2] \cd \step[1] \id \step[2] \id \\
				\step[1] \x {{{}^\Psi}} \step[1.21]  \nw1 \nw1 \step[1] \nw1 \\
				\ne1 \step[1] \cd \step[2] \id \step[1] \cu \\
				\S \step[2] \S  \step[2] \id \step[2] \id \step[2] \S \\
				\nw1 \step[1] \cu \step[2] \cu \step[2] \\
				\step[1] \cu \step[3] \ne1 \\
				\step[2] \Cu \\
				\step[4] \object{H}
			\end{tangle}
			\quad =\\
			& = \quad 
			\begin{tangle}
				\step[1] \object{H} \step[5] \object{H} \\
				\cd \step[3] \cd \\
				\id \step[2] \nw1 \step[2] \id \step[2] \id \\
				\id \step[3] \x {{{}^\Psi}} \step[1.21]  \id  \\
				\id \step[2] \cd \step[1] \nw1 \step[1] \nw1 \\ 
				\x {{{}^\Psi}} \step[1.21] \id \step[2] \id  \step[2] \id \\
				\id \step[2] \counit \step[2] \id \step[2] \id \step[2] \id\\
				\S \step[2] \unit \step[2] \id \step[2] \cu \\
				\cu \step[2] \nw1 \step[2] \S \\
				\step[1] \Cu \step[2] \id \\
				\step[3] \Cu \\
				\step[5] \object{H} \\
			\end{tangle}
			\quad  = \quad
			\begin{tangle}
				\step[1] \object{H} \step[4] \object{H} \\
				\cd \step[2] \cd  \\
				\id \step[2] \x {{{}^\Psi}} \step[1.21] \id \\
				\counit \step[2] \counit \step[2] \cu \\
				\unit \step[2] \unit \step[3] \S \\
				\cu \step[3] \id \\
				\step[1] \Cu \\
				\step[3] \object{H} \\
			\end{tangle}
			\quad = \quad
			\begin{tangle}
				\object{H} \step[2] \object{H} \\
				\cu  \\
				\step[1] \S \\
				\step[1] \object{H} \\
			\end{tangle}
		\end{aligned}
	\end{equation*}
	By complete analogy we may show the same property for the antipode and the coproduct, denoted as,
	\begin{equation*}
		\hstretch 90  \vstretch 60
		\begin{tangle}
			\step[1] \object{H} \\
			\cd \\
			\x {{{}^\Psi}} \\
			\S \step[2] \S \\
			\object{H} \step[2] \object{H} \\
		\end{tangle}
		\quad = \quad
		\begin{tangle}
			\step[1] \object{H} \\
			\step[1] \id \\
			\step[1] \S \\ 
			\cd \\
			\object{H} \step[2] \object{H} \\
		\end{tangle}
	\end{equation*}

	\subsection{The classical limit of $\cU_{q, \phi}(\hat{u}(2))$}\label{claslim}
    In this Appendix we consider the $q \to 1$ limit of the algebra  $\cU_{q,\phi}(\hat{u}(2)) $ given by \eqref{eq5}. Firstly let us define $q$ to be $e^h$ for $h<0$ and $H$ to be an operator such that $k=\exp{(h H)}$.
	Then, by the expansion of exponents in the first commutation relation from \eqref{eq5} one obtains:
	\begin{equation*}
		\begin{aligned}
			& ek = q e^{- 4 i \phi} ke \\
			& e(1 + hH + \cdots) = (1 + h + \cdots) e^{-4 i \phi} (1 + h H + \cdots) e \\
			& e + h~ eH + \cdots = e^{-4 i \phi} e + h e^{-4 i \phi}(e + He) +  \cdots \\
		\end{aligned}
	\end{equation*}
	
	Comparing the $0^{th}$ order in $h$ we come to $e^{-4 i \phi} = 1$, which is true only if $\phi$ is a multiple of $\pi/2$. Thus classical limit $h \to 0$ cannot be obtained without involving the limit in phase $\phi$ as well. Therefore we redefine $e^h =q e^{-4 i \phi}$, $h = \ln(q) - 4 i \phi$, so limit $h \to 0 $ correspond to $q \to 1$ and $\phi \to 0$ simultaneously. Then the above calculation changes into:
	\begin{equation*}
		\begin{aligned}
			& ek = q e^{- 4 i \phi} ke \\
			& e(1 + hH + \cdots) = (1 + h + \cdots)  (1 + h H + \cdots) e \\
			& e + h ~eH + \cdots =  e + h(e + He) +  \cdots \\
		\end{aligned}
	\end{equation*}
	
	So in the $0^{th}$ order we simply get $e = e$ and the limit $h \to 0$ gives $[e,H] = e$.
	
	The situation looks even more interesting for the commutator $[f,e]$, where up to the first order we obtain:
	\begin{equation}
		\label{com_limit}
		[f,e] = \left( 1 + \frac{ \Im(h)}{\fR(h)}\right) H + \left( 1 -  \frac{ \Im(h)}{\fR(h)}\right) H^*~.
	\end{equation}
	
	Therefore, the classical limit should be taken as $h \to 0$, and $\phi/\ln(q) \to 0$. Then the commutation relations \eqref{eq5} in the limit case are:
	\begin{equation}
		\begin{aligned}
			& [e,H] = e~, && [e, H^*] = e~, &&  [f,e] = H + H^*~, \\
			& [H,f] = f~, && [H^*,f] = f~, && [H,H^*] = 0 ~.\\
		\end{aligned}
	\end{equation}
	
	By taking limits for coproduct in the discussed manner one obtains:
	\begin{equation}
		\begin{aligned}
			& \cop e = e \otimes 1 + 1 \otimes e~, && \cop f = f \otimes 1 + 1 \otimes f~, \\
			& \cop H = H \otimes 1 + 1 \otimes H~, && \cop H^* = H^* \otimes 1 + 1 \otimes H^*~. \\    
		\end{aligned}
	\end{equation}
	
	If we assume that $H$ is hermitian, the resulting bialgebra is $su(2)$ Lie algebra. Otherwise $H$ may be decomposed into hermitian and antihermitian part $H = H_h + H_a$ so that obtained bialgebra is $u(2) = su(2) \oplus u(1)$ with $su(2)$ generated by $e,f,H_h$ and $u(1)$ generated by $H_a$.
	
	Note that one would have greater freedom of taking limits $q \to 1$ and $\phi \to 0$ independently, if one additionally assumes form the beginning that $k$ and $k^*$ are both exponents of hermitian operator $H$, ie;
	\begin{equation*}
		\begin{aligned}
			k = e^{( \log(q) - 4 i \phi) H } && k^* = e^{( \log(q) + 4 i \phi) H }
		\end{aligned}
	\end{equation*}
	
	In that case \eqref{com_limit} simplify to $ [f,e] = 2H$ 
	without additional requirement $\phi/\ln(q) \to 0$ and the limit $q \to 1$, $\phi \to 0$ of algebra $\cU_{q, \phi}(\hat{u}(2))$ simplifies to $\cU(su(2))$.

	\subsection{Left and right action on $SU_{q, \phi}(2)$}\label{appact}
	Below we present the left and the right action of $\cU_{q, \phi}(\hat{u}(2))$ generators on $SU_{q, \phi}(2)$ generators. 
	The explicit form of left action of $\cU_{q, \phi}(\hat{u}(2))$ on $SU_{q, \phi}(2)$ on generators is given by:
	
	\begin{equation}
		\label{eq14}
		\begin{aligned}
			& e \la \alpha = 0,  && e \la  \alpha^* = \gamma, \\
			& e \la \gamma = 0,  && e \la  \gamma^* = - q^{-1} e^{-4 i \phi} \alpha, \\
			& f \la \alpha = -q e^{4 i \phi} \gamma^*, && f \la  \alpha^* = 0, \\ 
			& f \la \gamma = \alpha^*,  && f \la  \gamma^* = 0, \\
			& k \la \alpha = q^{-\frac{1}{2}} e ^{2 i \phi} \alpha, && k \la \alpha^* = q^{\frac{1}{2}} e ^{-2 i \phi} \alpha^*, \\
			& k \la \gamma = q^{-\frac{1}{2}} e ^{2 i \phi} \gamma, && k \la \gamma^* = q^{\frac{1}{2}} e ^{-2 i \phi} \gamma^*, \\
			& k^* \la \alpha = q^{-\frac{1}{2}} e ^{- 2 i \phi} \alpha, && k^* \la \alpha^* = q^{\frac{1}{2}} e^{2 i \phi} \alpha^*, \\ 
			& k^* \la \gamma = q^{-\frac{1}{2}} e ^{-2 i \phi} \gamma, && k^* \la \gamma^* = q^{\frac{1}{2}} e ^{2 i \phi} \gamma^*, \\
			& k^{-1} \la \alpha = q^{\frac{1}{2}} e ^{-2 i \phi} \alpha, && k^{-1} \la \alpha^* = q^{-\frac{1}{2}} e ^{2 i \phi} \alpha^*, \\ 
			& k^{-1} \la \gamma = q^{\frac{1}{2}} e ^{-2 i \phi} \gamma, && k^{-1} \la \gamma^* = q^{-\frac{1}{2}} e ^{2 i \phi} \gamma^*, \\
			& (k^*)^{-1} \la \alpha = q^{\frac{1}{2}} e ^{2 i \phi} \alpha, && (k^*)^{-1} \la \alpha^* = q^{-\frac{1}{2}} e^{-2 i \phi} \alpha^* , \\
			& (k^*)^{-1} \la \gamma = q^{\frac{1}{2}} e ^{2 i \phi} \gamma, \qquad \qquad && (k^*)^{-1} \la \gamma^* = q^{-\frac{1}{2}} e ^{-2 i \phi} \gamma^*, 
		\end{aligned}
	\end{equation}
	while for the right action, we have,
	\begin{equation}
		\label{eq17}
		\begin{aligned}
			& \alpha \ra e = \gamma, && \alpha^* \ra e = 0, \\
			& \gamma  \ra e = 0,  && \gamma^* \ra e = - q^{-1} e^{-4 i \phi} \alpha^*,\\
			& \alpha \ra f = 0, &&  \alpha^* \ra f = -q e^{4 i \phi} \gamma^*, \\
			& \gamma \ra f= \alpha,  && \gamma^* \ra f = 0,\\
			& \alpha \ra k = \sqi e^{2 i \phi} \alpha, && \alpha^* \ra k= \sq  e ^{-2 i \phi} \alpha^*, \\
			& \gamma^* \ra k = \sqi e^{2 i \phi} \gamma^*, && \gamma \ra k  = \sq e ^{-2 i \phi} \gamma, \\
			& \alpha \ra k^* = \sqi e ^{- 2 i \phi} \alpha, \qquad \qquad && \alpha^* \ra k^*  = \sq  e^{2 i \phi} \alpha^*, \\
			& \gamma^* \ra k^*  = \sqi e ^{-2 i \phi} \gamma^* && \gamma \ra k^* = \sq e ^{2 i \phi} \gamma, \\
			& \alpha \ra k^{-1} = \sq e^{-2 i \phi} \alpha, && \alpha^* \ra k^{-1}  = \sqi e ^{2 i \phi} \alpha^*, \\
			& \gamma^* \ra k^{-1} = \sq  e ^{-2 i \phi} \gamma^*, \qquad && \gamma \ra k^{-1} =\sqi e ^{2 i \phi} \gamma, \\
			& \alpha \ra (k^*)^{-1} = \sq  e ^{2 i \phi} \alpha, && \alpha^* \ra (k^*)^{-1}  = \sqi e^{-2 i \phi} \alpha^*, \\
			&\gamma^* \ra (k^*)^{-1}  = q^{1/2} e ^{2 i \phi} \gamma^* \qquad && \gamma \ra (k^*)^{-1}  = \sqi e ^{-2 i \phi} \gamma.
		\end{aligned}
	\end{equation}
	\subsection{Podle\'s spheres representation}\label{podrep}
	
	In this Appendix we rewrite the obtained algebra of the Podle\'s sphere $S_{q,\phi}^2$  \eqref{Po} in a slightly
	different form (used by some authors):
	\begin{equation}
		\begin{aligned}
			& q^2 A b =  b A, \qquad \qquad & b B +  q^4 A^2 +  q^2  A + c= 0, \\
			& A B = q^2 B A, \quad &  B b + A^2 +  A + c = 0.
		\end{aligned}
	\end{equation}
	The $\be_0, \be_1,\be_{-1}$ generators can be expressed as,
	\begin{equation}
		\label{tr1.1}
		\begin{aligned}
			& \be_0 = \alpha^{-1} A + (\alpha (1 + q^2))^{-1}\\
			& \be_{-1} = (1 + q^2)^{-1/2} q^{-1} \rho \alpha^{-1} b \\
			& \be_1 = -(1 + q^2)^{-1/2} \zeta \alpha^{-1} B.
		\end{aligned}
	\end{equation}
	Then the action of $\cU_{q,\phi}(\hat{u}(2))$  on the
	generators $A,B,b$ takes the form:
	\begin{equation}
		\begin{aligned}
			&e \la A = \frac{1}{q} \frac{1}{\sqrt{q}}  \rho b,  \qquad \qquad   &e  \la b = 0, \\
			&e \la B = - (1+q^2) \frac{1}{\sqrt{q}}  e^{-4i\phi} \rho  A +\frac{1}{\sqrt{q}} e^{-4i\phi} \rho, \\
			&f \la A = - \frac{1}{\sqrt{q}}  \zeta B, \qquad \qquad  &f \la B = 0, \\
			&f \la b = (1+q^2) \sqrt{q}  \zeta   e^{-4i\phi} A + \sqrt{q}\zeta   e^{-4i\phi} , 
		\end{aligned}
	\end{equation}
	and, 
	\begin{equation}
		\begin{aligned}
			&k \la A = A, \qquad \qquad     &k \la b = q^{-1} e^{4i\phi} b, \qquad \qquad  &k \la B = q e^{-4i\phi} B, \\
			&k^* \la A = A, \qquad \qquad    &k^* \la b = q^{-1} e^{-4i\phi} b, \qquad \qquad    &k^* \la B = q e^{4i\phi}B, \\
		\end{aligned}
	\end{equation}
	where $\rho \zeta e^{-4i\phi} = 1 $ and the relations between $S_{q,\phi}^2$ the constants are
	\begin{equation}
		\label{tr1.3}
		\begin{aligned}
			& \lambda' = \frac{1-q^2}{1 + q^2} \alpha^{-1}~, \\
			& \rho' = \left(- c q^{-2} + (1 + q^2)^{-2}\right) \alpha^{-2}~. \\ 
		\end{aligned}
	\end{equation}
	Finally, the $*$-structure in this basis takes a form:
	\begin{equation*}
		\label{str2}
		\begin{aligned}
			& B^* = |\zeta|^{-2} b~,~~  &&  A^* = A~,~~ && b^* = |\rho|^{-2} B~. \\
		\end{aligned}
	\end{equation*}
	\subsection{Star structure}\label{sStuc_app}
	
	In the final Appendix, we complete the omitted calculations from the proof of the Theorem \ref{thm_sStruct}:  the compatibility between $*$-structure and associativity both in Hopf Algebra $H$ and its left module $A$. 
	In the computation below we frequently use the fact that $\delta(h \la a) = \delta(h) + \delta(a)$, which is equivalent to the condition \eqref{action_comp} from the definition of covariant braided action for algebras obtained from twisting.
	
	Let us first consider the compatibility of the $*$-structure with associativity in $H$.
	Next to the diagram representing the composition of maps, we write the occurring
	phase factors.  The expression $(hg \la a)^*$ gives:
	\begin{equation*}
		\hstretch 90  \vstretch 60
		\begin{aligned}
			\begin{tangle}
				\object{H} \step[2] \object{H} \step[2] \object{A} \\
				\cu \step[2] \id \\
				\step[1] \lu[3] \\
				\step[4] \O* \\
				\step[4] \object{A} \\
			\end{tangle}
			\quad & = \quad
			\begin{tangle}
				\object{H} \step[2] \object{H} \step[2] \object{A} \\
				\id \step[2] \lu[2] \\
				\lu[4] \\
				\step[4] \O* \\
				\step[4] \object{A} \\
			\end{tangle}
			\quad = \quad
			\begin{tangle}
				\step[1] \object{H} \step[2] \object{H} \step[2] \object{A} \\
				\step[1] \S \step[2] \lu[2] \\
				\obox 6*  \\
				\step[1] \lu[4] \\
				\step[5] \object{A} \\
			\end{tangle}
			\quad = \quad
			\begin{tangle}
				\object{H} \step[2] \object{H} \step[2] \object{A} \\
				\S \step[2] \lu[2] \\
				\O* \step[4] \O* \step[0.5] \cdot{\scriptstyle \varphi(\delta(h), \delta(g \la a))} \vspace{-0.07 cm}\\
				\lu[4] \\
				\step[4] \object{A} \\
			\end{tangle}
			\quad \\
			& = \quad
			\begin{tangle}
				\object{H} \step[3] \object{H} \step[2] \object{A} \\
				\S \step[3] \S \step[2] \id \\
				\O* \step[2] \obox 4* \\
				\id \step[3] \lu[2] \step[0.5] \cdot {\scriptstyle\varphi(\delta(h),\delta(g)+ \delta(a))}  \vspace{-0.07 cm}\\
				\lu[5] \\
				\step[5] \object{A} \\
			\end{tangle}
			\quad = \quad
			\begin{tangle}
				\object{H} \step[2] \object{H} \step[2] \object{A} \\
				\S \step[2] \S \step[2] \id \\
				\O* \step[2] \O* \step[2] \O* \\
				\id \step[2] \lu[2] \step[0.5] \cdot {\scriptstyle\varphi(\delta(g), \delta(a))~\varphi(\delta(h),\delta(g)+ \delta(a))}  \vspace{-0.07 cm}\\
				\lu[4] \\
				\step[4] \object{A} \\
			\end{tangle}
		\end{aligned}
	\end{equation*}
	This, by definition of $*$-structure compatibility, should be equal to $\la \circ \ast (S(hg) \otimes a$):
	
	\begin{equation*}
		\hstretch 90  \vstretch 60
		\begin{aligned}
			\begin{tangle}
				\object{H} \step[2] \object{H} \step[2] \object{A} \\
				\cu \step[2] \id \\
				\step[1] \S \step[3] \id \\
				\obox 5*  \\
				\step[1] \lu[3] \\
				\step[4] \object{A} \\
			\end{tangle}
			\quad & = \quad
			\begin{tangle}
				\object{H} \step[2] \object{H} \step[2] \object{A} \\
				\x {{{}^\Psi}} \step[1.21] \id \\
				\S \step[2] \S \step[2] \id \\
				\cu \step[2] \id \\
				\obox 5* \\
				\step[1] \lu[3] \\
				\step[4] \object{A} \\
			\end{tangle}
			\quad = \quad
			\begin{tangle}
				\object{H} \step[2] \object{H} \step[2] \object{A} \\
				\S \step[2] \S \step[2] \id \\
				\x {{{}^\Psi}} \step[1.21] \id \\
				\cu \step[2] \id \\
				\step[1] \O* \step[3] \O* \step[0.5]\cdot {\scriptstyle \varphi(\delta(S(g)S(h)), \delta(a))}  \vspace{-0.07 cm}\\
				\step[1] \lu[3]   \\
				\step[4] \object{A} \\
			\end{tangle}
			\quad = \quad
			\begin{tangle}
				\object{H} \step[2] \object{H} \step[2] \object{A} \\
				\S \step[2] \S \step[2] \id \\
				\x {{{}^\Psi}} \step[1.21] \id \\
				\xd \step[2] \id \\
				\O* \step[2] \O* \step[2] \O* \step[0.5]\cdot {\scriptstyle \varphi(\delta(g)+ \delta(h), \delta(a))}  \vspace{-0.07 cm}\\
				\cu \step[2] \id \\
				\step[1] \lu[3] \\
				\step[4] \object{A} \\
			\end{tangle} 
			\quad = \\
			&= \quad
			\begin{tangle}
				\object{H} \step[2] \object{H} \step[2] \object{A} \\
				\S \step[2] \S \step[2] \id \\
				\x {{{}^\Psi}} \step[1.21] \id \\
				\xx {{{}^{\Psi^{-1}}}} \step[0.10] \id \step[0.5]\cdot e^{2 i \phi \delta(h)\delta(g)} \\
				\O* \step[2] \O* \step[2] \O* \step[0.5]\cdot {\scriptstyle \varphi(\delta(g)+ \delta(h), \delta(a)) } \vspace{-0.07 cm}\\
				\cu \step[2] \id \\
				\step[1] \lu[3] \\
				\step[4] \object{A} \\
			\end{tangle}
			\quad = \quad
			\begin{tangle}
				\object{H} \step[2] \object{H} \step[2] \object{A} \\
				\S \step[2] \S \step[2] \id \\
				\O* \step[2] \O* \step[2] \O* \step[0.5]\cdot  e^{-2 i \phi \delta(h)\delta(g)}  {\scriptstyle \varphi(\delta(g)+ \delta(h), \delta(a))}  \vspace{-0.07 cm}\\
				\id \step[2] \lu[2] \\
				\lu[4] \\
				\step[4] \object{A} \\
			\end{tangle}
		\end{aligned}
	\end{equation*}
	In the last step the phase in $e^{2 i \phi \delta(h)\delta(g)}$ was changed by a complex conjugation arising from the $*$ operator. Moreover, the crossed dotted lines represent an ordinary transposition (no braiding involved). By comparing the results we see that they are equal to each other if and only if:
	\begin{equation}
		\label{s_comp1}
		\varphi(\delta(g) + \delta(h), \delta(a)) = \varphi(\delta(h), \delta(g) + \delta(a)) \varphi(g,a) e^{2 i \phi \delta(h) \delta(g)} 
	\end{equation}
	
	Similarly we test the compatibility of the definition of $*$- structure with the associativity in $A$ considering expression $[h \la (ab)]^*$: 
	\vspace{0.2 cm}
	\begin{equation*}
		\hstretch 90  \vstretch 60
		\begin{aligned}
			\begin{tangle}
				\object{H} \step[2] \object{A} \step[2] \object{A} \\
				\id  \step[2] \cu \\
				\lu[3] \\
				\step[3] \O* \\
				\step[3] \object{A} \\
			\end{tangle}
			\quad &= \quad
			\begin{tangle}
				\step[1] \object{H} \step[2] \object{A} \step[2] \object{A} \\
				\step[1] \S  \step[2] \cu \\
				\obox 6* \\
				\step[1]\lu[3] \\
				\step[4] \object{A} \\
			\end{tangle}
			\quad = \quad
			\begin{tangle}
				\object{H} \step[2] \object{A} \step[2] \object{A} \\
				\S  \step[2] \cu \\
				\O* \step[3] \O*  \step[0.5] \cdot {\scriptstyle\varphi(\delta(h), \delta(ab))} \vspace{-0.07 cm}\\
				\lu[3] \\
				\step[3] \object{A} \\
			\end{tangle}
			\quad \\
			&= \quad
			\begin{tangle}
				\object{H} \step[2] \object{A} \step[2] \object{A} \\
				\S  \step[2] \xd \\
				\O* \step[2] \O* \step[2] \O* \step[0.5] \cdot {\scriptstyle \varphi(\delta(h), \delta(a) + \delta(b))} \vspace{-0.07 cm}\\
				\id \step[2] \cu  \\
				\lu[3] \\
				\step[3] \object{A} \\
			\end{tangle}
			\quad =  \quad
			\begin{tangle}
				\step[1] \object{H} \step[3] \object{A} \step[2] \object{A} \\
				\step[1] \S  \step[3] \xd \\
				\step[1] \O* \step[3] \O* \step[2] \O* \step[0.5] \cdot {\scriptstyle \varphi(\delta(h), \delta(a) + \delta(b))} \vspace{-0.07 cm}\\
				\cd  \step[2] \id \step[2] \id \\
				\id \step[2] \x {{{}^\Psi}} \step[1.21] \id \\
				\lu[2] \step[2] \lu[2] \\
				\step[2] \Cu \\
				\step[4] \object{A} \\
			\end{tangle}
		\end{aligned}
	\end{equation*}
	But on the other hand, by $*$-structure compatibility it should be equal to:
	\begin{equation*}
		\hstretch 90  \vstretch 60
		\begin{aligned}
			& \begin{tangle}
				\object{H} \step[2] \object{A} \step[2] \object{A} \\
				\id  \step[2] \cu \\
				\lu[3] \\
				\step[3] \O* \\
				\step[3] \object{A} \\
			\end{tangle}
			\quad = \quad
			\begin{tangle}
				\step[1]\object{H} \step[3] \object{A} \step[2] \object{A} \\
				\cd   \step[2] \id \step[2] \id \\
				\id \step[2] \x {{{}^\Psi}} \step[1.21] \id \\
				\lu[2] \step[2] \lu[2]\\
				\step[2] \Cu \\
				\step[4] \O* \\
				\step[4] \object{A} \\
			\end{tangle}
			\quad = \quad
			\begin{tangle}
				\step[1]\object{H} \step[3] \object{A} \step[2] \object{A} \\
				\cd   \step[2] \id \step[2] \id \\
				\id \step[2] \x {{{}^\Psi}} \step[1.21] \id \\
				\lu[2] \step[2] \lu[2]\\
				\step[2] \O* \step[4] \O* \\
				\step[2] \id \step[3] \ne2 \\
				\step[2] \xd \\
				\step[2] \cu \\
				\step[3] \object{A} \\
			\end{tangle}
			\quad = \quad
			\begin{tangle}
				\step[2]\object{H} \step[3] \object{A} \step[2] \object{A} \\
				\step[1] \cd   \step[2] \id \step[2] \id \\
				\step[1]\id \step[2] \x {{{}^\Psi}} \step[1.21] \id \\
				\step[1] \S \step[2] \id \step[2] \S \step[2] \id \\
				\obox4*  \obox4* \\
				\step[1] \lu[2] \step[2] \lu[2]\\
				\step[3] \id \step[3] \ne2 \\
				\step[3] \xd \\
				\step[3] \cu \\
				\step[4] \object{A} \\
			\end{tangle}
			\quad = \\
			& = \quad
			\begin{tangle}
				\step[1]\object{H} \step[3] \object{A} \step[2] \object{A} \\
				\cd   \step[2] \id \step[2] \id \\
				\id \step[2] \x {{{}^\Psi}} \step[1.21] \id \\
				\S \step[2] \id \step[2] \S \step[2] \id \\
				\O* \step[2] \O* \step[2] \O* \step[2]  \O* \step[0.4] \cdot { \scriptstyle \varphi(S(h_{(1)}),a)\varphi(S(h_{(2)}),b) } \vspace{-0.12 cm} \\
				\lu[2] \step[2] \lu[2]\\
				\step[2] \id \step[3] \ne2 \\
				\step[2] \xd \\
				\step[2] \cu \\
				\step[3] \object{A} \\
			\end{tangle}
			\quad = \quad 
			\begin{tangle}
				\step[1]\object{H} \step[3] \object{A} \step[2] \object{A} \\
				\cd   \step[2] \id \step[2] \id \\
				\S \step[2] \S \step[2] \id \step[2] \id \\
				\id \step[2] \x {{{}^\Psi}} \step[1.21] \id \\
				\O* \step[2] \O* \step[2] \O* \step[2]  \O* \step[0.4] \cdot { \scriptstyle \varphi(S(h_{(1)}),a)\varphi(S(h_{(2)}),b) } \vspace{-0.12 cm} \\
				\lu[2] \step[2] \lu[2]\\
				\step[2] \id \step[3] \ne2 \\
				\step[2] \xd \\
				\step[2] \cu \\
				\step[3] \object{A} \\
			\end{tangle}
			\quad = \\
		\end{aligned}
	\end{equation*}
	
	\begin{equation*}
		\hstretch 90  \vstretch 60
		\begin{aligned}    
			\phantom{xxx}  &  = \quad
			\begin{tangle}
				\step[1]\object{H} \step[3] \object{A} \step[2] \object{A} \\
				\step[1] \S \step[3] \id \step[2] \id \\
				\cd   \step[2] \id \step[2] \id \\
				\xx {{{}^{\Psi^{-1}}}} \step[0.10]  \id \step[2] \id \\
				\id \step[2] \x {{{}^\Psi}} \step[1.21] \id \\
				\O* \step[2] \O* \step[2] \O* \step[2]  \O* \step[0.4] \cdot {\scriptstyle  \varphi(S(h_{(1)}),a)\varphi(S(h_{(2)}),b) } \vspace{-0.12 cm} \\
				\lu[2] \step[2] \lu[2]\\
				\step[2] \id \step[3] \ne2 \\
				\step[2] \xd \\
				\step[2] \cu \\
				\step[3] \object{A} \\
			\end{tangle}
			\quad = \quad 
			\begin{tangle}
				\step[1]\object{H} \step[3] \object{A} \step[2] \object{A} \\
				\step[1] \S \step[3] \id \step[2] \id \\
				\cd   \step[2] \id \step[2] \id \\
				\xd \step[2] \id \step[2] \id \step[0.4] \cdot e^{-2 i \phi \delta(S(h_{(2)}) \delta(S(h_{(1)}))} \\
				\id \step[2] \xd \step[2] \id  \step[0.4] \cdot  e^{ 2 i \phi \delta(S(h_{(2)}))\delta(a)} \\
				\O* \step[2] \O* \step[2] \O* \step[2]  \O* \step[0.4] \cdot {\scriptstyle  \varphi(S(h_{(1)}),a)\varphi(S(h_{(2)}),b) } \vspace{-0.12 cm} \\
				\lu[2] \step[2] \lu[2]  \\
				\step[2] \id \step[3] \ne2 \\
				\step[2] \xd \\
				\step[2] \cu \\
				\step[3] \object{A} \\
			\end{tangle}
			\quad = 
		\end{aligned}
	\end{equation*}
	\begin{equation*}
		\hstretch 90  \vstretch 60
		\begin{aligned}               
			\phantom{xxx} & \qquad= 
 \qquad 
			\begin{tangle}
				\step[1]\object{H} \step[3] \object{A} \step[2] \object{A} \\
				\step[1] \S \step[3] \xd \\
				\cd   \step[2] \id \step[2] \id  \step[0.4] \cdot  e^{-2 i \phi \delta(S(h_{(2)}) \delta(S(h_{(1)}))} e^{ 2 i \phi \delta(S(h_{(2)}))\delta(a)} \\
				\O* \step[2] \O* \step[2] \O* \step[2] \O* \step[0.4] \cdot {\scriptstyle  \varphi(S(h_{(1)}),a)\varphi(S(h_{(2)}),b) }\vspace{-0.12 cm}\\
				\id \step[2] \xd \step[2] \id \\
				\lu[2] \step[2] \lu[2]\\
				\step[2] \Cu \\
				\step[4] \object{A} \\
			\end{tangle}
			\end{aligned}
	\end{equation*}    
	\begin{equation*}
	\hstretch 90  \vstretch 60
	\begin{aligned}
		\qquad &= 
 \qquad 
  	\begin{tangle}
				\step[1]\object{H} \step[3] \object{A} \step[2] \object{A} \\
				\step[1] \S \step[3] \xd \\
				\cd   \step[2] \id \step[2] \id  \\
				\O* \step[2] \O* \step[2] \O* \step[2] \O* \step[0.4] \cdot {\scriptstyle  \varphi(S(h_{(1)}),a)\varphi(S(h_{(2)}),b) } \vspace{-0.12 cm} \\
				\id \step[2] \x {{{}^\Psi}} \step[1.21] \id \\
				\lu[2] \step[2] \lu[2] \step[0.4] \cdot e^{-2 i \phi\delta(S(h_{(1)})^*) \delta(b^{*})}  \\
				\step[2] \Cu \step[0.4] \cdot   e^{- 2 i \phi \delta(S(h_{(2)}))\delta(a)}\\
				\step[4] \object{A}  \step[2.4] \cdot e^{2 i \phi \delta(S(h_{(2)}) \delta(S(h_{(1)}))} \\
			\end{tangle}
		\qquad &= 
 \qquad 
			\begin{tangle}
				\step[1]\object{H} \step[3] \object{A} \step[2] \object{A} \\
				\step[1] \S \step[3] \xd \\
				\step[1] \O* \step[3] \O* \step[2] \O* \step[0.4] \cdot {\scriptstyle  \varphi(S(h_{(1)}),a)\varphi(S(h_{(2)}),b) } \vspace{-0.12 cm} \\
				\cd \step[2] \id \step[2] \id  \\
				\id \step[2] \x {{{}^\Psi}} \step[1.21] \id \step[0.4]  \cdot e^{-2 i \phi\delta(S(h_{(1)})^*) \delta(b^{*})}\\
				\lu[2] \step[2] \lu[2] \step[0.4] \cdot   e^{- 2 i \phi \delta(S(h_{(2)}))\delta(a)} \\
				\step[2] \Cu  \step[0.4] \cdot e^{2 i \phi \delta(S(h_{(2)}) \delta(S(h_{(1)}))} \\
				\step[4] \object{A}  \\
			\end{tangle}
\qquad.
		\end{aligned}
    \vspace{0.5 cm}
	\end{equation*}
	Both sides are equal only if:
	\begin{equation*}
		\begin{aligned}
			\varphi(\delta(h), &\delta(a) + \delta(b)) =  \varphi(\delta (S(h_{(1)})) \delta(a)) \varphi(\delta(S(h_{(2)})),\delta(b)) \cdot \\
			& \cdot e^{- 2 i \phi \delta(S(h_{(2)})) \delta(a)} e^{-2 i \phi \delta(S(h_{(1)}^*)) \delta(b^*)} e^{-2 i \phi \delta(S(h_{1}))\delta(S(h_{(2)}))} e^{2 i \phi \delta(S(h_{1}))\delta(S(h_{(2)}))} \\
		\end{aligned}
	\end{equation*}
	Using the properties of $\delta$ with respect to the antipode and the conjugation the above expression can be simplified to
	\begin{equation}
		\label{s_comp2}
		\varphi(\delta(h), \delta(a) \!+\! \delta(b)) = \varphi(\delta(h_{(1)}),\delta(a)) \varphi(\delta(h_{(2)}), \delta(b)) e^{- 2 i \phi \delta(h_{(2)}) \delta(a)} e^{-2 i \phi \delta(h_{(1)}) \delta(b)}.
	\end{equation}
	
	\vfill

\end{document}